\newcommand{\NP}{\ensuremath{\mathsf{NP}}}
\newtheorem{theorem}{Theorem}[section]
\newtheorem{definition}[theorem]{Definition}
\newtheorem{corollary}[theorem]{Corollary}
\newtheorem{observation}[theorem]{Observation}
\newtheorem{claim}{Claim}
\newtheorem*{thm-bigraph}{\Cref{{thm:decomposition-2P3-free-right-Sperner-bigraphs}}}
\newtheorem*{thm-cobigraph}{\Cref{{thm:decomposition-co-2P3-free-right-Sperner-cobigraphs}}}
\newtheorem{lemma}[theorem]{Lemma}
\newtheorem{remark}{Remark}
\newcommand{\bbone}{{\boldsymbol{1}}}
\newcommand{\bbzero}{{\boldsymbol{0}}}
\DeclareMathOperator{\cw}{cw}
\title{Characterizing and decomposing classes of threshold, \\split, and bipartite graphs via $1$-Sperner hypergraphs}
\author{Endre Boros\\
\small MSIS Department and RUTCOR, Rutgers University, New Jersey, USA\\
\small 100 Rockafeller Rd, Piscataway NJ 08854, USA\\
\small \texttt{Endre.Boros@rutgers.edu}\\
\and
Vladimir Gurvich\\
\small National Research University: Higher School of Economics, Moscow, Russia\\
\small \texttt{vgurvich@hse.ru}\\
\and
Martin Milani\v c\\
\small University of Primorska, UP IAM, Muzejski trg 2, SI6000 Koper, Slovenia\\
\small University of Primorska, UP FAMNIT, Glagolja\v ska 8, SI6000 Koper, Slovenia\\
\small \texttt{martin.milanic@upr.si}
}
\date{\today}
\begin{document}

\maketitle

\begin{sloppypar}
\begin{abstract}
A hypergraph is said to be \emph{$1$-Sperner} if for every two hyperedges the smallest of their two set differences is of size one. We present several applications of $1$-Sperner hypergraphs and their structure to graphs. In particular, we consider the classical characterizations of threshold and domishold graphs and use them to obtain further characterizations of these classes in terms of $1$-Spernerness, thresholdness, and $2$-asummability
of their vertex cover, clique, dominating set, and closed neighborhood hypergraphs. Furthermore, we apply a decomposition property of $1$-Sperner hypergraphs to derive decomposition theorems for two classes of split graphs, a class of bipartite graphs, and a class of cobipartite graphs. These decomposition theorems are based on certain matrix partitions of the corresponding graphs, giving rise to new classes of graphs of bounded clique-width and new polynomially solvable cases of several domination problems.
\end{abstract}
\end{sloppypar}


\section{Introduction}\label{sec:introduction}

Hypergraphs are one of the most fundamental and general combinatorial objects, encompassing various important structures such as graphs, matroids, and combinatorial designs. Results for more specific discrete structures (e.g., graphs) can often be proved more generally, in the context of suitable classes of hypergraphs~(see, e.g., \citet{MR1956926}). Of course, applications of hypergraph theory to structural and optimization aspects of other discrete structures are not restricted to the above phenomenon. Since it is impossible to survey here all kinds of applications of hypergraphs, let us mention only a few more. First, recent work of \citet{ACM-TALG} made use of connections between hypergraphs and binary matrices, acyclic digraphs, and partially ordered sets to study two combinatorial optimization problems motivated by computational biology. Second, a common approach of applying hypergraph theory to graphs is to define and study a hypergraph derived in an appropriate way from a given graph, depending on what type of property of a graph or its vertex or edge subsets one is interested in. This includes matching hypergraphs~\cite{MR2852510,MR818499}, various clique~\cite{MR2755907,MR818499,MR1689294,MR539710,MR719998}, independent set~\cite{MR2755907,MR818499}, neighborhood~\cite{MR3314932,MR3281177}, separator~\cite{MR1936236,CM-ISAIM2014}, and dominating set hypergraphs~\cite{MR3281177,CM-ISAIM2014}, etc.
Close interrelations between hypergraphs and monotone Boolean functions can be useful in such studies, allowing for the transfer and applications of results from the theory of Boolean functions (see, e.g., \cite{MR2742439}).

In this work, we present several new applications of hypergraphs to graphs. Our starting point is~\cite{BGM-decomposing-1-Sperner} where the class of $1$-Sperner hypergraphs was studied. It was shown that such hypergraphs can be decomposed in a particular way and that they form a subclass of the class of threshold hypergraphs studied by~\citet{MR562306}, by~\citet{MR791660} and, in the equivalent context of threshold monotone Boolean functions, also by~\citet{MR0439441} and by~\citet{MR798011}. (Precise definitions of all relevant concepts will be given in \Cref{sec:preliminaries}.)

\begin{sloppypar}
While threshold hypergraphs are defined using the existence of certain weights, the $1$-Sperner property is a `purely combinatorial' notion that forms a sufficient condition for thresholdness. Purely combinatorial necessary conditions for thresholdness are also known and follow from the characterization of threshold Boolean functions due to~\citet{Chow} and~\citet{5397278}. The condition states that a hypergraph is threshold if and only if a certain obvious necessary combinatorial condition, called $k$-asummability, for all $k\ge 2$, is satisfied. For certain families of hypergraphs derived from graphs $2$-asummability implies $1$-Spernerness, which means that in such cases all the three (generally properly nested) properties of $1$-Spernerness, thresholdness, and $2$-asummability coincide. Early applications of this idea are implicit in the works of \citet{MR0479384} and~\citet{MR0491342} characterizing threshold and domishold graphs, respectively (see~\Cref{sec:old-results} for details).
By definition, these are graphs that admit a non-negative linear vertex weight function separating the characteristic vectors of all independent sets, resp.~dominating sets~from the characteristic vectors of all other sets. The corresponding hypergraphs derived from a given graph, obtained
using the notion of transversal hypergraphs, are the hypergraphs of inclusion-minimal vertex covers and inclusion-minimal closed neighborhoods, respectively.
\end{sloppypar}

\begin{sloppypar}
More recent examples involve the works of~\citet{MR3281177,ISAIM2014} who made use of a slightly more general class than $1$-Sperner hypergraphs, called dually Sperner hypergraphs, to characterize two classes of graphs defined by the following properties: every induced subgraph has a non-negative linear vertex weight function separating the characteristic vectors of all total dominating sets~\cite{ISAIM2014}, resp.~connected dominating sets~\cite{MR3281177}, from the characteristic vectors of all other sets. In these cases, the corresponding transversal hypergraphs derived from a graph $G$ are the hypergraphs of inclusion-minimal (open) neighborhoods and inclusion-minimal cutsets, respectively.
Due to the close relation between $1$-Sperner and dually Sperner hypergraphs (see~\cite{BGM-decomposing-1-Sperner}), all the results from~\cite{MR3281177,ISAIM2014} can be equivalently stated using $1$-Sperner hypergraphs. In particular, the results of the extended abstract~\cite{ISAIM2014} are stated using the $1$-Sperner property in the full version of the paper~\cite{CM-ISAIM2014}.
\end{sloppypar}

A common approach in these studies is to consider various hypergraphs associated to a given graph and investigate the class of graphs for which the associated hypergraph is threshold. The goal is then to characterize these graph classes and, in particular, to understand under what conditions they are hereditary (that is, it is closed under vertex deletion). In some of the above cases (namely in the cases of threshold and domishold graphs) the resulting graph class is hereditary, while in the other two cases mentioned above the condition for induced subgraphs has to be imposed on the graph as part of the definition, since in general those properties are not hereditary.

\subsection*{Our results}

We present several novel applications of the notion of $1$-Sperner hypergraphs and their structure to graphs. Our results can be summarized as follows.

\medskip
\begin{sloppypar}
\noindent{\bf 1.~Characterizations of threshold and domishold graphs.} We show that forbidden induced subgraph characterizations of threshold and domishold graphs due to \citet{MR0479384} and~\citet{MR0491342} imply that
for several families of hypergraphs derived from graphs, some or all of the three (generally properly nested) properties of $1$-Spernerness, thresholdness, and $2$-asummability coincide.
These families include the vertex cover, clique, closed neighborhood, and dominating set hypergraphs (see Theorems~\ref{thm:threshold-graphs-2},~\ref{thm:threshold},~\ref{thm:domishold}, and~\ref{thm:domishold-2}). For example, we show that in the class of clique hypergraphs of graphs, $1$-Spernerness, thresholdness,
and $2$-asummability are all equivalent and that
they characterize the threshold graphs. Furthermore, threshold graphs are exactly the co-occurrence graphs of 1-Sperner hypergraphs (Theorem~\ref{thm:threshold-graphs-new}).
\end{sloppypar}

\begin{sloppypar}
These results are interesting for multiple reasons. First, they give new characterizations of threshold graphs, which further extend the long list~\cite{MR1417258}. Second, our characterizations of clique hypergraphs of threshold graphs parallels the characterization of clique hypergraphs of chordal graphs due to~\citet{MR719998}. Third, our characterizations of threshold and domishold graphs implicitly address the question by Herranz~\cite{MR2794309} asking about further families of Boolean functions
that enjoy the property that thresholdness and $2$-asummability coincide.
Our results imply that this property is enjoyed by monotone Boolean functions corresponding to various hypergraphs naturally associated to graphs: vertex cover, clique, closed neighborhood, and dominating set hypergraphs. Finally, it is not at all a priori obvious whether the class of graphs whose clique hypergraphs are threshold is hereditary or not. Interestingly, it turns out that it is. Considering instead of the family of maximal cliques the families of edges (or, equivalently, the transversal family of inclusion-minimal vertex covers)
or of inclusion-minimal dominating sets (or, equivalently, the transversal family of inclusion-minimal closed neighborhoods) also results in hereditary graphs classes. In contrast, the inclusion-minimal (open) neighborhoods or inclusion-minimal cutsets do not define hereditary graph classes.
\end{sloppypar}

\medskip

\begin{sloppypar}
\noindent{\bf 2.~Decomposition results for four classes of graphs.}
We apply the decomposition theorem for $1$-Sperner hypergraphs to derive decomposition theorems for four interrelated classes of graphs:
two subclasses of split graphs, a class of bipartite graphs, and a class of cobipartite graphs (Theorems~\ref{thm:decomposition-clique-split-H-free},~\ref{thm:decomposition-independent-split-H-bar-free},
~\ref{thm:decomposition-2P3-free-right-Sperner-bigraphs}, and~\ref{thm:decomposition-co-2P3-free-right-Sperner-cobigraphs}, respectively). This is done by exploiting a variety of ways of how the incidence relation between the vertices and the hyperedges of a hypergraph can be represented with a graph, expressing the $1$-Sperner property of hypergraphs in terms of the corresponding derived graphs, and finally translating the decomposition of $1$-Sperner hypergraphs to decompositions of the corresponding graphs. These decompositions are recursive and are described in terms of matrix partitions of graphs, 
a concept introduced by Feder et al.~\cite{MR2002173} and studied in a series of papers~(see, e.g.,~\cite{MR3090507} and references therein).
\end{sloppypar}

\medskip
\noindent{\bf 3.~New classes of graphs of bounded clique-width.}
We explore several consequences of the obtained graph decomposition results. The first ones are related to clique-width, a graph parameter introduced by~\citet{MR1217156}, whose importance is mainly due to the fact that many \NP-hard decision and optimization problems are polynomially solvable in classes of graphs of uniformly bounded clique-width~\cite{MR2323400,MR1739644,MR2232389}. This makes it important to identify graph classes of bounded, resp.~unbounded clique-width~\cite{MR2536473}. Each of the four considered graph classes obtained from $1$-Sperner hypergraphs is defined by forbidding one particular $6$-vertex graph as induced subgraph (one of the two graphs in Fig.~\ref{fig:small-graphs-1} or their complements)
and an extra condition on the structure of the neighborhoods.

\begin{figure}[h!]
  \centering
   \includegraphics[width=70mm]{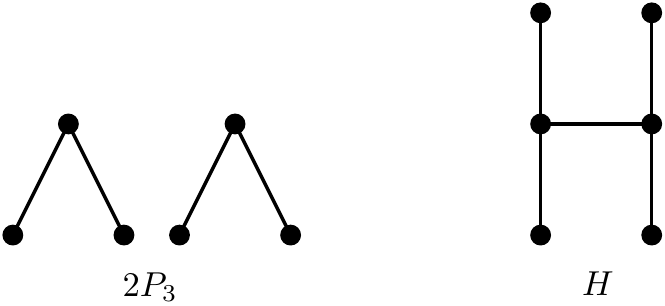}
\caption{The graphs $2P_3$ and $H$.}
\label{fig:small-graphs-1}
\end{figure}

As a consequence of the decomposition theorems, we show that a graph in any of the four classes has clique-width at most $5$ (Theorems~\ref{thm:cwd-clique-split},~\ref{thm:cwd-independent-split},~\ref{thm:cwd-bigraph}, and~\ref{thm:cwd-cobigraph}). This result is in sharp
contrast with the fact that the four graph classes defined by the same forbidden induced subgraphs but without any restrictions on the neighborhoods are of unbounded clique-width. In particular, this is the case for the class of $H$-free split graphs, where $H$ is one of the graphs depicted in Fig.~\ref{fig:small-graphs-1}.

Similar results, but incomparable to ours, were obtained for a class of bipartite graphs defined by two $7$-vertex forbidden induced subgraphs~\citet{MR1755401}; see also~\cite{MR3679842,MR2158639,MR2046648} for related work. Our results for bipartite graphs directly address the remark from~\cite{MR2158639}: {\it ``It is challenging to find new classes of bipartite graphs, defined both by their decomposition tree and another property (\ldots) and to use this for new efficient algorithms design.''} Naturally, our approach is applicable also to split and cobipartite graphs.

\medskip
\noindent{\bf 4.~New polynomially solvable cases of variants of domination.}
Finally, we make further use of the bounded clique-width result by identifying new polynomially solvable cases of three basic variants of the dominating set problem. We show that the dominating set problem, the total dominating set problem, and the connected dominating set problem are all polynomial-time solvable in the class of $H$-free split graphs (Theorem~\ref{thm:dom-sets}). This result is sharp in the sense that all three problems are \NP-hard in the class of split graphs~\cite{LP83,MR761623,MR754426}, as well as in the class of $H$-free graphs. The result for $H$-free graphs follows from the fact that the class of $H$-free graphs is a superclass of the class of line graphs, in which all these problems are known to be \NP-hard~\cite{MR579424,McRae,MR2874128}. Moreover, strong inapproximability results are known for these variants of domination
in the class of split graphs: for every $\epsilon >0$, the dominating set, the total dominating set, and the connected dominating set problem cannot be approximated to within a factor on  \hbox{$(1-\epsilon)\ln n$} in the class of $n$-vertex split graphs, unless ${\sf P} = \NP$. Theorem 6.5 in~\cite{MR3732593} provides this result for domination and total domination, while for connected domination the same result follows from the fact that all the three problems are equivalent in any class of split graphs (see Lemma~\ref{lem:reductions}).

\subsection*{Structure of the paper}

\begin{sloppypar}
In \Cref{sec:preliminaries}, we collect the necessary preliminaries on graphs and hypergraphs, including a decomposition theorem for $1$-Sperner hypergraphs. In~\Cref{sec:old-results}, we consider the characterizations of threshold and domishold graphs due to~\citet{MR0479384} and~\citet{MR0491342} and use them to obtain further characterizations of these classes in terms of $1$-Spernerness, thresholdness, and $2$-asummability properties of their vertex cover, clique, dominating set, and closed neighborhood hypergraphs.
In \Cref{sec:to-graphs}, we associate various types of incidence graphs to a hypergraph and translate the $1$-Spernerness property of the hypregraph to the incidence graphs. In \Cref{sec:structure-graphs}, we apply the decomposition theorem for $1$-Sperner hypergraphs to derive decomposition theorems for four classes of graphs. These theorems are then used in~\Cref{sec:clique-width} to infer that
each of the graph in resulting four classes has clique-width at most $5$.
Finally, in \Cref{sec:domination} we study the consequences of our structural approach for domination problems in a subclass of split graphs.
\end{sloppypar}

\section{Preliminaries}\label{sec:preliminaries}

\subsection{Preliminaries on graphs}

All graphs in this paper will be finite, simple, and undirected.
Let $G = (V,E)$ be a graph. Given a vertex $v\in V$, its \emph{neighborhood} is the set, denoted by $N(v)$, of vertices adjacent to $v$, and its  \emph{closed neighborhood} is the set, denoted by $N[v]$, of
vertices adjacent or equal to $v$. 
An \emph{independent set} (also called a \emph{stable set}) in $G$ is a set of pairwise non-adjacent vertices, and a \emph{clique} is a set of pairwise adjacent vertices. A \emph{dominating set} in $G$ is a set $D\subseteq V$ such that every vertex of $G$ is either in $D$ or has a neighbor in $D$. A \emph{total dominating set} in $G$ is a set $D\subseteq V$ such that every vertex of $G$ has a neighbor in $D$.
A \emph{connected dominating set} in $G$ is a dominating set that induces a connected subgraph. {\sc Dominating Set}, {\sc Total Dominating Set}, and {\sc Connected Dominating Set} are the problems of finding a minimum dominating set, a total dominating set, resp., connected dominating set in a given graph. (Note that a graph has a total dominating set if and only if it has no isolated vertices and it has a connected dominating set if and only if it is connected.) We denote the minimum size of a dominating set, total dominating set, and connected dominating set in a graph $G$ by $\gamma(G)$, $\gamma_t(G)$, and $\gamma_c(G)$, respectively.

Given a set $\mathcal{F}$ of graphs and a graph $G$, we say that $G$ is \emph{$\mathcal{F}$-free} if and only if $G$ has no induced subgraph isomorphic to a member of $\mathcal{F}$. Moreover, for a graph $F$, we say that $G$ is \emph{$F$-free} if it is $\{F\}$-free. Given a graph $G$ and a vertex $u\in V(G)$, we denote by $G-u$ the subgraph of $G$ induced by $V(G)\setminus \{u\}$. We denote the disjoint union of two graphs $G_1$ and $G_2$ by $G_1+G_2$; in particular, we write $2G$ for the disjoint union of two copies of $G$. As usual, we denote by $P_n$, $C_n$, and $K_n$ the path, the cycle, and the complete graph with $n$ vertices, respectively.

\medskip
\noindent{\bf Split, bipartite, and cobipartite graphs.}
A graph is \emph{split} if its vertex set can be partitioned into a (possibly empty) clique and a (possibly empty) independent set. A \emph{split partition} of a split graph $G$ is a pair $(K,I)$ such that $K$ is a clique, $I$ is an independent set, $K\cap I = \emptyset$, and $K\cup I = V(G)$.
A graph is \emph{bipartite} (or, shortly, a \emph{bigraph}) if its vertex set can be partitioned into two (possibly empty) independent sets. A \emph{bipartition} of a bipartite graph $G$ is a pair $(A,B)$ of two independent sets in $G$ such that $A\cap B = \emptyset$ and $A\cup B = V(G)$.
A graph $G$ is \emph{cobipartite} if its complement $\overline{G}$,
that is, the graph with vertex set $V(G)$ in which two distinct vertices are adjacent if and only if they are non-adjacent in $G$, is bipartite.

\medskip
\noindent{\bf Threshold graphs.} A graph $G = (V,E)$ is said to be \emph{threshold} if there exist a non-negative integer weight function $w:V\to \mathbb{Z}_{\ge 0}$ and a non-negative integer threshold $t\in \mathbb{Z}_{\ge 0}$ such that for every subset $X\subseteq V$, we have
$w(X):= \sum_{x\in X}w(x)\le t$ if and only if $X$ is an independent set.
Threshold graphs were introduced by Chv\'atal and Hammer in 1970s~\cite{MR0479384} and were afterwards studied in numerous papers. Many results on threshold graphs are summarized in the monograph by Mahadev and Peled~\cite{MR1417258}.

We now recall three of the many characterizations of threshold graphs. By $P_4$ we denote the $4$-vertex path, by $C_4$ the $4$-vertex cycle, and by $2K_2$ the disjoint union of two copies of $K_2$ (the complete graph of order two).

\begin{theorem}[Chv\'atal and Hammer~\cite{MR0479384}]\label{thm:char1}
A graph $G$ is threshold if and only if $G$ is $\{P_4,C_4,2K_2\}$-free.
\end{theorem}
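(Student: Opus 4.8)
The plan is to prove the two directions separately, with the forward direction (threshold implies $\{P_4,C_4,2K_2\}$-free) being the easy part and the reverse direction the main work.

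First I would establish the forward direction by showing that each of the three forbidden graphs $P_4$, $C_4$, and $2K_2$ fails to be threshold, and then invoke the fact that the class of threshold graphs is hereditary (closed under taking induced subgraphs). Hereditariness is immediate from the definition: if $w$ and $t$ witness that $G$ is threshold, then restricting $w$ to the vertex set of an induced subgraph $G'$ separates the independent sets of $G'$ from the non-independent sets of $G'$, since both independence and non-independence are preserved under taking induced subgraphs. To see that none of the three graphs is threshold, I would argue directly from the weight inequalities. For $2K_2$ with edges $\{a,b\}$ and $\{c,d\}$: the sets $\{a,c\}$ and $\{b,d\}$ are independent while $\{a,b\}$ and $\{c,d\}$ are not, so $w(a)+w(c)\le t < w(a)+w(b)$ and $w(b)+w(d)\le t< w(c)+w(d)$, and adding the two strict inequalities and comparing with the sum of the two non-strict ones yields a contradiction. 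Entirely analogous swapping arguments dispose of $C_4$ and $P_4$. Since $G$ threshold forces every induced subgraph to be threshold, and none of $P_4$, $C_4$, $2K_2$ is threshold, a threshold graph contains none of them as an induced subgraph.

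For the reverse direction, I would use the well-known structural consequence of being $\{P_4,C_4,2K_2\}$-free: such a graph admits a \emph{nested neighborhood} structure, or equivalently can be built by repeatedly adding isolated or dominating vertices. Concretely, I would show by induction on $|V(G)|$ that a $\{P_4,C_4,2K_2\}$-free graph $G$ has a vertex that is either isolated or adjacent to all other vertices (a dominating vertex). The key lemma here is that in a $\{P_4,C_4,2K_2\}$-free graph the neighborhoods are totally ordered by inclusion; picking a vertex $v$ of maximum degree then forces $v$ to be dominating (if $G$ has any edges) or else all vertices are isolated. The hard part will be verifying this neighborhood-nesting claim: one must check that if two vertices $u,v$ had incomparable neighborhoods, then choosing $a\in N(u)\setminus N(v)$ and $b\in N(v)\setminus N(u)$ produces, depending on whether $u\sim v$ and $a\sim b$, an induced $P_4$, $C_4$, or $2K_2$ among $\{u,v,a,b\}$, a contradiction in every case.

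Finally, I would convert this decomposition into weights, completing the induction. If $v$ is isolated in $G$, then by induction $G-v$ is threshold with weights $w'$ and threshold $t'$; assigning $w(v)=0$ and keeping $w=w'$, $t=t'$ works, since adding an isolated vertex of weight $0$ to any independent set keeps it independent and of weight $\le t'$, and adding it to any non-independent set keeps it non-independent. If instead $v$ is dominating, I would apply the inductive hypothesis to $G-v$ to obtain $w'$ and $t'$, then set $w(v)=t'+1$ and $t=t'$, keeping the other weights; any set containing $v$ together with another vertex is non-independent and indeed has weight exceeding $t'$, the singleton $\{v\}$ and the empty set are handled by the bound $w(v)\le t$ possibly requiring a small adjustment to $t$, and every subset of $V(G-v)$ retains its correct classification. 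Checking these cases is routine once the decomposition is in hand, so the entire substance of the reverse direction lies in establishing the inclusion-nesting of neighborhoods by the four-vertex case analysis.
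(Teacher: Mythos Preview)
The paper does not give its own proof of this theorem; it is stated with attribution to Chv\'atal and Hammer and used as a black box (e.g., in the proofs of Theorems~\ref{thm:threshold-graphs-2} and~\ref{thm:threshold}). So there is no in-paper argument to compare against; what you have written is essentially the classical proof.

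Two small points of imprecision are worth tightening before you would call this a finished proof. First, the claim that ``the neighborhoods are totally ordered by inclusion'' is literally false for open neighborhoods (already in $K_2$). The correct statement is that the \emph{vicinal preorder} is total: for all $u,v$, either $N(u)\subseteq N[v]$ or $N(v)\subseteq N[u]$. Your four-case analysis with $a\in N(u)\setminus N[v]$ and $b\in N(v)\setminus N[u]$ is exactly right once stated this way, and from it the existence of an isolated or universal vertex follows cleanly (if no vertex is isolated, a $\preceq$-maximal vertex is universal). Second, your weight assignment in the universal-vertex case does not quite work as written: with $w(v)=t'+1$ and $t=t'$ the singleton $\{v\}$ is independent but overweight, and you cannot ``adjust $t$'' without breaking the classification of subsets of $V(G{-}v)$. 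One clean fix is to take $w(v)=t'$ and arrange inductively that all weights in $G{-}v$ are strictly positive (so any $Y\neq\emptyset$ has $w'(Y)\ge 1$); alternatively, simply stop at the isolated/universal decomposition, which is Corollary~\ref{cor:threshold} in the paper, and quote any of the standard weight constructions from~\cite{MR1417258}.
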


\begin{theorem}[Chv\'atal and Hammer~\cite{MR0479384}]\label{thm:char2}
A graph $G$ is threshold if and only if $G$ is split with a split partition $(K,I)$ such that there exists an ordering $v_1,\ldots, v_k$ of $I$ such that
$N(v_i)\subseteq N(v_j)$ for all $1\le i<j\le k$.
\end{theorem}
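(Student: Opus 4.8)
The plan is to prove the two implications separately, invoking the forbidden-subgraph characterization of \Cref{thm:char1} for the harder (sufficiency) direction. For the forward implication, I would start from nonnegative integer weights $w$ and a threshold $t$ witnessing that $G$ is threshold, and first isolate the pairwise consequence of the definition: applied to two-element sets, it says that distinct vertices $u,v$ are adjacent if and only if $w(u)+w(v)>t$. Both the split partition and the nested ordering then fall out by sorting vertices by weight. Setting $K=\{v : 2w(v)>t\}$ and $I=\{v : 2w(v)\le t\}$, any two vertices of $K$ have weight sum exceeding $t$ and are adjacent, so $K$ is a clique, while any two vertices of $I$ have weight sum at most $t$ and are non-adjacent, so $I$ is independent; thus $(K,I)$ is a split partition. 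Ordering $I$ as $v_1,\ldots,v_k$ by nondecreasing weight, if $u\in N(v_i)$ and $i<j$ then $w(u)+w(v_j)\ge w(u)+w(v_i)>t$, so $u$ is adjacent to $v_j$ (and $u\neq v_j$ since $I$ is independent), giving $N(v_i)\subseteq N(v_j)$.

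For the converse, I would show that a split graph $G$ with the nested-ordering property is $\{P_4,C_4,2K_2\}$-free, and then apply \Cref{thm:char1}. Freeness from $2K_2$ and $C_4$ holds for every split graph and uses only the partition $(K,I)$: a short case check shows that any such induced subgraph would force either two non-adjacent vertices into the clique $K$ or an edge inside the independent set $I$. The substantive case is $P_4$-freeness, and this is exactly where the nested hypothesis is used. Suppose $a-b-c-d$ induces a $P_4$. The clique part $K\cap\{a,b,c,d\}$ must be contained in a single edge of the $P_4$, which together with $|I\cap\{a,b,c,d\}|\le 2$ forces the endpoints $a,d$ into $I$ and the midpoints $b,c$ into $K$. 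But then $b\in N(a)\setminus N(d)$ and $c\in N(d)\setminus N(a)$, so the neighborhoods of the two $I$-vertices $a$ and $d$ are incomparable, contradicting the fact that the hypothesis makes all neighborhoods of $I$-vertices a chain under inclusion.

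I expect the forward direction to be routine once the pairwise adjacency rule $u\sim v \iff w(u)+w(v)>t$ is isolated; the only place needing care is the converse, specifically the case analysis pinning down $a,d\in I$ and $b,c\in K$ in the $P_4$, since that placement is the unique one compatible with a split partition and it is precisely there that the chain condition produces the contradiction. An alternative, self-contained route for the converse would avoid \Cref{thm:char1} and instead build explicit weights from the nested chain $N(v_1)\subseteq\cdots\subseteq N(v_k)\subseteq K$ directly; this is doable but more delicate, since the threshold inequality would then have to be verified for all vertex subsets rather than merely for pairs, so I would favor the forbidden-subgraph route.
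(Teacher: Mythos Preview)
Your argument is correct in both directions. The forward direction cleanly extracts the split partition and the chain of neighborhoods from the weight order, and the converse correctly reduces to \Cref{thm:char1}; the only nontrivial case, $P_4$, is handled properly since the unique split-compatible placement of a $P_4$ has its endpoints in $I$ and its midpoints in $K$, where the incomparability of $N(a)$ and $N(d)$ contradicts the chain hypothesis.

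There is nothing to compare against in the paper: \Cref{thm:char2} is quoted from Chv\'atal and Hammer~\cite{MR0479384} as a known characterization and is not proved here. Your proposal thus supplies a proof where the paper simply cites one. One minor remark on presentation: since your converse invokes \Cref{thm:char1}, the argument is not self-contained relative to this paper (which also only cites \Cref{thm:char1}); the alternative you sketch, building explicit weights from the chain $N(v_1)\subseteq\cdots\subseteq N(v_k)$, would make the equivalence internal, but as you note it is more work and unnecessary given that \Cref{thm:char1} is already available.
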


The \emph{join} of two vertex-disjoint graphs $G_1$ and $G_2$ is the graph, denoted by $G_1\ast G_2$, obtained from the disjoint union $G_1+G_2$ by adding to it all edges with one endpoint in $G_1$ and one endpoint in $G_2$.

\begin{sloppypar}
\begin{corollary}\label{cor:threshold}
A graph $G$ is threshold if and only if $G$ can be built from the $1$-vertex graph by an iterative application of the operations of adding an isolated vertex (that is, disjoint union with $K_1$) or a universal vertex (that is, join with $K_1$).
\end{corollary}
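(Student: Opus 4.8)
The plan is to prove both implications, relying on the two characterizations of threshold graphs already stated. For the direction that every buildable graph is threshold, I would argue by induction on the number of vertices, using the forbidden subgraph characterization of \Cref{thm:char1}. The base case $K_1$ is vacuously $\{P_4,C_4,2K_2\}$-free. For the inductive step, suppose $G'$ arises from a threshold graph $G$ by adding a vertex $v$ that is either isolated or universal in $G'$. Any induced copy of $P_4$, $C_4$, or $2K_2$ in $G'$ either avoids $v$, and hence lies entirely in $G$ (impossible by the induction hypothesis and \Cref{thm:char1}), or contains $v$. The key observation is that each of $P_4$, $C_4$, and $2K_2$ has minimum degree $1$ and maximum degree $2$, so none of them can contain a vertex of degree $0$ or of degree $3$; since $v$ is either isolated (degree $0$) or universal (degree $3$ within any four-vertex induced subgraph containing it), $v$ cannot participate in such a copy. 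Thus $G'$ is $\{P_4,C_4,2K_2\}$-free and therefore threshold.

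For the converse, that every threshold graph is buildable, I would again induct on $|V(G)|$, the case $|V(G)|=1$ being immediate. The heart of the argument is the claim that every threshold graph on at least one vertex has a vertex that is either isolated or universal. To prove this, I would invoke the nested-neighborhood characterization of \Cref{thm:char2}: write $G$ as a split graph with split partition $(K,I)$ and an ordering $v_1,\dots,v_k$ of $I$ satisfying $N(v_1)\subseteq\cdots\subseteq N(v_k)$. If $I=\emptyset$, then $G$ is a clique and any vertex is universal; if $N(v_1)=\emptyset$, then $v_1$ is isolated. In the remaining case pick any $u\in N(v_1)$. Necessarily $u\in K$, and since $N(v_1)\subseteq N(v_i)$ for every $i$, we get $u\in N(v_i)$ for all $i$, so $u$ is adjacent to every vertex of $I$; being a member of the clique $K$, it is adjacent to every other vertex of $K$ as well, and hence $u$ is universal.

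Granting this claim, let $v$ be an isolated or universal vertex of $G$. Since the class of $\{P_4,C_4,2K_2\}$-free graphs is hereditary, \Cref{thm:char1} shows that $G-v$ is again threshold, so by the induction hypothesis $G-v$ can be built from $K_1$ by the two allowed operations. Re-adding $v$ as an isolated vertex (disjoint union with $K_1$) or a universal vertex (join with $K_1$), according to its type in $G$, exhibits $G$ as buildable and completes the induction.

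This proof is essentially routine once \Cref{thm:char2} is available; the only genuinely substantive step is the existence of an isolated or universal vertex. Even that presents no real obstacle: examining the vertex $v_1$ of smallest neighborhood immediately yields an isolated vertex when $N(v_1)$ is empty, while the nesting condition forces any neighbor of $v_1$ in the clique to be universal otherwise. I would expect no step of this argument to require more than the two cited characterizations and a short case analysis.
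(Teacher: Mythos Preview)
Your argument is correct. Both directions are handled cleanly: the forward direction via the degree observation on $P_4$, $C_4$, $2K_2$, and the converse via the existence of an isolated or universal vertex extracted from the nested-neighborhood split partition of \Cref{thm:char2}.

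As for comparison with the paper: there is nothing to compare against. The paper states \Cref{cor:threshold} as a well-known consequence of the Chv\'atal--Hammer characterizations (Theorems~\ref{thm:char1} and~\ref{thm:char2}) and does not supply a proof. Your derivation from those two theorems is exactly the natural route one would take to justify the label ``Corollary,'' and would serve perfectly well if the paper had chosen to include a proof.
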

\end{sloppypar}

\medskip
\noindent{\bf Domishold graphs.} A graph $G = (V,E)$ is said to be \emph{domishold} if there exist a non-negative integer weight function $w:V\to \mathbb{Z}_{\ge 0}$ and a non-negative integer threshold $t\in \mathbb{Z}_{\ge 0}$ such that for every subset $X\subseteq V$, we have
$w(X)\ge t$ if and only if $X$ is a dominating set.
Domishold graphs were introduced by~\citet{MR0491342}, who characterized them with the set of forbidden induced subgraphs depicted in Fig.~\ref{fig:domishold}.

\begin{sloppypar}
\begin{theorem}[\citet{MR0491342}]\label{thm:char3}
A graph $G$ is domishold if and only if $G$ is $\{P_4,2K_2,K_{3,3},K_{3,3}^+,\overline{2P_3}\}$-free.
\end{theorem}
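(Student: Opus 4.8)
The plan is to recast the statement as a fact about monotone Boolean threshold functions. To $G=(V,E)$ I associate the monotone Boolean function $f_G$ on $\{0,1\}^V$ whose true points are the characteristic vectors of the dominating sets (monotonicity holds because every superset of a dominating set is dominating), so that $G$ is domishold exactly when $f_G$ is threshold. The elementary necessary condition I exploit is \emph{$2$-asummability}: if $f_G$ admits separating weights $w$ and threshold $t$, there cannot exist dominating sets $D_1,D_2$ and non-dominating sets $N_1,N_2$ with $\mathbf 1_{D_1}+\mathbf 1_{D_2}=\mathbf 1_{N_1}+\mathbf 1_{N_2}$, since summing the defining inequalities yields $2t\le w(D_1)+w(D_2)=w(N_1)+w(N_2)<2t$. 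Each forbidden graph will be disqualified by exhibiting such a quadruple. For $2K_2$ with edges $ab,cd$ the dominating sets $\{a,c\},\{b,d\}$ and the non-dominating sets $\{a,b\},\{c,d\}$ have equal coordinate sums; the same four sets serve for the path $P_4$ on $a,b,c,d$ (whose only inclusion-minimal closed neighborhoods are $\{a,b\}$ and $\{c,d\}$). For $K_{3,3}$ with sides $\{a_1,a_2,a_3\}$ and $\{b_1,b_2,b_3\}$, the dominating sets $\{a_1,b_1\},\{a_2,b_2\}$ and the non-dominating sets $\{a_1,a_2\},\{b_1,b_2\}$ work, and wholly analogous certificates exist for $K_{3,3}^+$ and $\overline{2P_3}$.

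For the direction ``domishold $\Rightarrow$ $\mathcal F$-free'' (with $\mathcal F=\{P_4,2K_2,K_{3,3},K_{3,3}^+,\overline{2P_3}\}$) these certificates are not enough by themselves, because they live inside a fixed induced subgraph whereas the rest of the ambient graph may dominate the relevant witness vertices. I would therefore first prove that the class of domishold graphs is hereditary, and then apply the certificates to the five graphs themselves. The heredity lemma is the delicate point: deleting a vertex $u$ erases the domination constraint attached to $N[u]$, and removing a prime implicate of a threshold function need not preserve thresholdness, so a dedicated weight reassignment is needed. The cases where $u$ is isolated or universal are clean (shift the threshold by $w(u)$, respectively give the deleted vertex weight $t$), while the general case requires re-solving the separating system with the missing constraint; this is the only genuine work in the necessity direction.

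For the direction ``$\mathcal F$-free $\Rightarrow$ domishold'' I would induct on $|V|$ and build the weights explicitly. Adding an isolated vertex (assign it weight exceeding $w(V)$ and raise $t$ accordingly) and adding a universal vertex (assign it weight $t$) both preserve domisholdness, so whenever $G$ has an isolated or a universal vertex I delete it and invoke the inductive hypothesis on the (still $\mathcal F$-free) smaller graph. In the remaining case $G$ has no isolated and no universal vertex; since $G$ is $\{P_4,2K_2\}$-free, its complement is $\{P_4,C_4\}$-free, i.e.\ trivially perfect, and a trivially perfect graph on at least two vertices with no universal vertex is disconnected. Hence $\overline G$ splits as a disjoint union and $G$ itself decomposes as a nontrivial join $G=G_1\ast G_2$. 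For a join, $X$ is dominating if and only if it meets both $V(G_1)$ and $V(G_2)$, or it is contained in one side and dominates that side.

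I expect this join case to be the main obstacle, and the three extra forbidden graphs are exactly what make it tractable. An induced $K_{3,3}$ inside $G_1\ast G_2$ is the same thing as three pairwise non-adjacent vertices in each side, so $K_{3,3}$-freeness forces one of $G_1,G_2$ to have independence number at most $2$; that side, being itself $\{P_4,2K_2\}$-free with no universal vertex, is severely constrained, and forbidding $K_{3,3}^+$ and $\overline{2P_3}$ pins down its structure completely. The task is then a finite case analysis showing that, for each admissible pair of sides, the ``meets both sides'' clause of the domination function collapses to a genuine threshold condition, so that weights separating the dominating from the non-dominating sets of $G_1$ and of $G_2$ can be rescaled and combined into separating weights for $G$. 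The configuration $K_{3,3}=\overline{K_3}\ast\overline{K_3}$ illustrates precisely the ``two independent triples'' pattern that the $2$-summand above rules out, and verifying that no such pattern survives in an $\mathcal F$-free join is the heart of the argument.
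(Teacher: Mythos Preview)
The paper does not prove Theorem~\ref{thm:char3}; it is quoted as a result of Benzaken and Hammer and used as a black box (most notably in the proofs of Theorems~\ref{thm:domishold} and~\ref{thm:domishold-2}). There is no argument in the paper to compare your attempt against.

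On your outline itself: the sufficiency sketch (peel off isolated and universal vertices, then analyse the join $G_1\ast G_2$ using $K_{3,3},K_{3,3}^+,\overline{2P_3}$ to tame one side) is the right shape, though the join case is where essentially all the work lies and you have only indicated its flavour. The necessity direction has a real gap. You are right that a $2$-summability certificate living inside an induced copy of $F$ need not survive in the ambient graph $G$, and you propose to repair this by first proving that domisholdness is hereditary. But your only argument for the general vertex-deletion step is ``re-solving the separating system with the missing constraint,'' and that is not a proof: on the closed-neighborhood side, deleting $u$ amounts to deleting $u$ from every hyperedge of $\mathcal N(G)$ \emph{and} discarding the hyperedge coming from $N_G[u]$, and removing a hyperedge from a threshold hypergraph does not in general yield a threshold hypergraph. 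The cleaner route is to bypass heredity altogether and build the certificate directly in $G$ using full closed neighborhoods rather than four bare vertices. For instance, if $G$ contains an induced $2K_2$ or $P_4$ on $a,b,c,d$ with $ab,cd\in E(G)$ and $ac,ad,bc\notin E(G)$, set $B_1=N_G[a]$, $B_2=N_G[c]$, $A_1=(N_G[a]\setminus\{a\})\cup\{c\}$, $A_2=(N_G[c]\setminus\{c\})\cup\{a\}$; then $\chi^{A_1}+\chi^{A_2}=\chi^{B_1}+\chi^{B_2}$, each $B_i$ contains a closed neighborhood, and neither $A_i$ does, which via the duality between $\mathcal N(G)$ and $\mathcal D(G)$ shows that $G$ is not domishold. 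The paper's proof of the implication $(4)\Rightarrow(1)$ in Theorem~\ref{thm:domishold} carries out precisely this kind of construction, but there it is deployed \emph{assuming} Theorem~\ref{thm:char3} rather than to establish it.
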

\end{sloppypar}

\begin{figure}[h!]
  \centering
   \includegraphics[width=130mm]{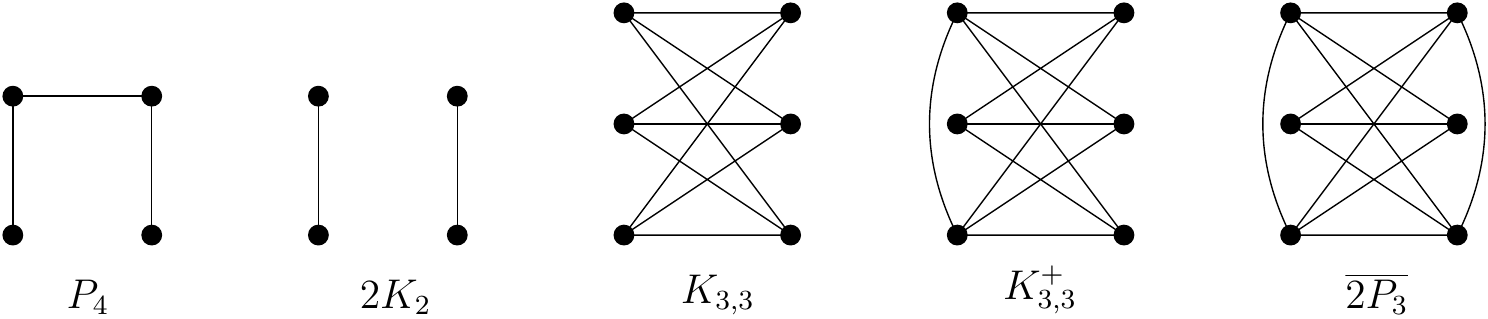}
\caption{Forbidden induced subgraphs for the class of domishold graphs.}
\label{fig:domishold}
\end{figure}

%

\medskip
\noindent{\bf Matrix partitions.} Let $M$ be a symmetric $m\times m$ matrix with entries in $\{0,1,\ast\}$. An \emph{$M$-partition} of a graph $G$ is a partition of the vertex set of $G$ into parts $V_1,\ldots, V_m$ such that two distinct vertices in parts $V_i$ and $V_j$ (possibly with $i = j$) are adjacent if $M_{ij} = 1$ and nonadjacent if $M_{ij} = 0$. In particular, for $i = j$ these restrictions mean that $V_i$ is either a clique, or an independent set, or is unrestricted, when $M_{ii}$ is $1$, or $0$, or $\ast$, respectively. In general, some of the parts may be empty.

\medskip
\noindent{\bf Clique-width.} A \emph{$k$-graph} is a graph $G$ whose vertices are equipped  with labels from the set $[k]:=\{1,\ldots, k\}$. The \emph{clique-width} of a graph $G = (V,E)$, denoted $\cw(G)$, is the smallest positive integer $k$ such that there exists a $k$-graph isomorphic to $G$ that can be obtained as the result of a finite sequence of iteratively applying the following operations:
\begin{itemize}
  \item $i(v)$ where $i\in [k]$: creation of a $1$-vertex $k$-graph with its unique vertex $v$ labeled $i$,
  \item $G_1\oplus G_2$, disjoint union of two $k$-graphs $G_1$ and $G_2$,
  \item $\rho_{i\to j}(G')$ where $i,j\in [k]$ with $i\neq j$: the $k$-graph obtained from
  the $k$-graph $G'$ by assigning label $j$ to all vertices having label $i$
  \item $\eta_{i,j}(G')$ where $i,j\in [k]$ with $i\neq j$: the $k$-graph obtained from the $k$-graph $G'$ by adding to it all edges of the form $uv$ where $u$ has label $i$ and $v$ has label $j$.
\end{itemize}
An algebraic expression describing the above construction is called a \emph{ $k$-expression} of $G$. For example, the following is a $3$-expression of a $4$-vertex path with vertices $v_1,v_2,v_3,v_4$ and edges $v_1v_2,v_2v_3,v_3v_4$:
$$\eta_{2,3}(\rho_{3\to 2}(\rho_{2\to 1}(\eta_{2,3}(\eta_{1,2}(1(v_1)\oplus 2(v_2))\oplus 3(v_3))))\oplus 3(v_3))\,.$$
Given a $k$-expression $\tau$ of $G$, we denote by $|\tau|$ its length, which is the number of symbols one needs to write it down.

\subsection{Preliminaries on hypergraphs}

A \emph{hypergraph} $\mathcal{H}$ is a pair $(V, E) $ where $V = V(\mathcal{H})$ is a finite set of \emph{vertices} and $E = E(\mathcal{H})$ is a set of subsets of $V$, called \emph{hyperedges}.
A hypergraph is \emph{$k$-uniform} if all its hyperedges have size $k$.
In particular, graphs are precisely the $2$-uniform hypergraphs.
Every hypergraph $\mathcal{H}=(V,E)$ with a fixed pair of orderings
of its vertices and edges, say
$V = \{v_1,\ldots,  v_n\}$,
and
$E = \{e_1,\ldots,  e_m\}$,
can be represented with its \emph{incidence matrix} $A^\mathcal{H}\in \{0,1\}^{E\times V}$ having rows and columns indexed by edges and vertices of $\mathcal{H}$, respectively,
and being defined
as
\[
A^\mathcal{H}_{i,j} =
\left\{
\begin{array}{ll}
1, & \hbox{if $v_j\in e_i$;} \\
0, & \hbox{otherwise.}
\end{array}
\right.
\]

\noindent{\bf Sperner hypergraphs.} A hypergraph is said to be \emph{Sperner} if no hyperedge contains another one, that is, if $e,f\in E$ and $e\subseteq f$ implies $e = f$; see, e.g.,~\citet{MR1544925,MR0258642,MR0396277}. Sperner hypergraphs were studied in the literature under different names
including \emph{simple hypergraphs} by~\citet{MR1013569},
\emph{clutters} by~Billera~\cite{MR0307924,MR0307923} and by Edmonds and Fulkerson~\cite{MR0269433,MR0255235}, and \emph{coalitions} in the game theory literature~\cite{MR0219323}.

\medskip
\noindent{\bf Dually Sperner hypergraphs.} Sperner hypergraphs can be equivalently defined as the hypergraphs such that every two distinct hyperedges $e$ and $f$ satisfy $\min\{|e\setminus f|,|f\setminus e|\}\ge 1$. This point of view motivated Chiarelli and Milani\v c to define in~\cite{MR3281177} a hypergraph $\mathcal{H}$ to be \emph{dually Sperner} if every two distinct hyperedges $e$ and $f$ satisfy $\min\{|e\setminus f|,|f\setminus e|\}\le 1$. It was shown in~\cite{MR3281177} that dually Sperner hypergraphs are threshold.

\medskip
\noindent{\bf 1-Sperner hypergraphs.} Following~\citet{BGM-decomposing-1-Sperner}, we say that a hypergraph $\mathcal{H}$ is \emph{$k$-Sperner} if every two distinct hyperedges $e$ and $f$ satisfy $$1\le \min\{|e\setminus f|,|f\setminus e|\}\le k\,.$$
In particular, $\mathcal{H}$ is $1$-Sperner if every two distinct hyperedges $e$ and $f$ satisfy $$\min\{|e\setminus f|,|f\setminus e|\}= 1\,,$$
or, equivalently, if, for any two distinct hyperedges $e$ and $f$ of $\mathcal{H}$ with $|e| \leq |f|$, we have $|e\setminus f| = 1$.

As proved by~\citet{BGM-decomposing-1-Sperner}, $1$-Sperner hypergraphs admit a
recursive decomposition. In order to state the result, we first need to recall some definitions. The following operation produces (with one exception) a new $1$-Sperner hypergraph from a given pair of $1$-Sperner hypergraphs.

\begin{definition}[Gluing of two hypergraphs]
Given a pair of vertex-disjoint hypergraphs
$\mathcal{H}_1 = (V_1,E_1)$ and
$\mathcal{H}_2 = (V_2,E_2)$ and a new vertex $z\not\in V_1\cup V_2$,
the \emph{gluing of $\mathcal{H}_1$ and $\mathcal{H}_2$} is the hypergraph
$\mathcal{H} = \mathcal{H}_1\odot  \mathcal{H}_2$ such that
$$V(\mathcal{H}) = V_1\cup V_2\cup\{z\}$$ and
$$E(\mathcal{H}) = \{\{z\}\cup e\mid e\in E_1\} \cup \{V_1\cup e\mid e\in E_2\}\,.$$
\end{definition}

Let us illustrate the operation of gluing on an example, in terms of incidence matrices. Let $n_i = |V_i|$ and $m_i = |E_i|$ for $i = 1,2$, and let us denote by $\bbzero^{k,\ell}$, resp.~$\bbone^{k,\ell}$, the $k\times \ell$ matrix of all zeroes, resp.~of all ones. Then, the incidence matrix of the gluing of $\mathcal{H}_1$ and $\mathcal{H}_2$ can be written as
\[
A^{\mathcal{H}_1\odot \mathcal{H}_2} =\left(
    \begin{array}{ccc}
      \bbone^{m_1,1} & A^{\mathcal{H}_1} & \bbzero^{m_1, n_2} \\
      \bbzero^{m_2,1} & \bbone^{m_2, n_1}  & A^{\mathcal{H}_2} \\
    \end{array}
  \right)\,.
\]
See Fig.~\ref{fig:1} for an example.

\begin{figure}[h!]
  \centering
   \includegraphics[width=150mm]{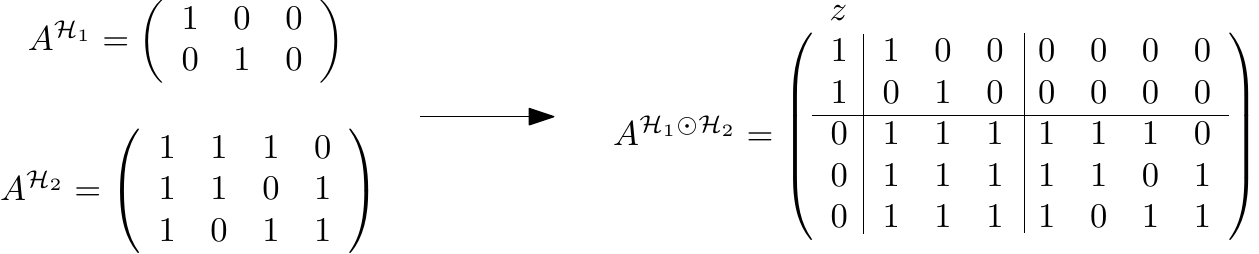}
  \caption{An example of gluing of two hypergraphs (from~\cite{BGM-decomposing-1-Sperner}).}\label{fig:1}
\end{figure}

\begin{observation}\label{obs:constituents}
If the gluing of $\mathcal{H}_1$ and $\mathcal{H}_2$ is a $1$-Sperner hypergraph, then $\mathcal{H}_1$ and $\mathcal{H}_2$ are also $1$-Sperner.
\end{observation}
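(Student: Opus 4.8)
The plan is to prove the contrapositive in a sufficiently local way: I will show that if either $\mathcal{H}_1$ or $\mathcal{H}_2$ fails to be $1$-Sperner, then the gluing $\mathcal{H} = \mathcal{H}_1\odot\mathcal{H}_2$ also fails to be $1$-Sperner. The key observation is that the gluing operation embeds each hyperedge of $\mathcal{H}_1$ and of $\mathcal{H}_2$ into a hyperedge of $\mathcal{H}$ in a way that preserves the relevant set differences. Concretely, writing the hyperedges of $\mathcal{H}$ according to the incidence matrix displayed above, the hyperedges coming from $E_1$ have the form $\{z\}\cup e$ for $e\in E_1$, and those coming from $E_2$ have the form $V_1\cup f$ for $f\in E_2$.

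First I would fix two distinct hyperedges $e, e'\in E_1$ and compare their images $\{z\}\cup e$ and $\{z\}\cup e'$ in $\mathcal{H}$. Since the new vertex $z$ lies in both, it cancels in both set differences: $(\{z\}\cup e)\setminus(\{z\}\cup e') = e\setminus e'$ and symmetrically for the other difference. Hence
\[
\min\bigl\{|(\{z\}\cup e)\setminus(\{z\}\cup e')|,\,|(\{z\}\cup e')\setminus(\{z\}\cup e)|\bigr\}
= \min\bigl\{|e\setminus e'|,\,|e'\setminus e|\bigr\}.
\]
Because $\mathcal{H}$ is assumed $1$-Sperner, the left-hand side equals $1$, and therefore the right-hand side equals $1$ as well. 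As $e$ and $e'$ were arbitrary distinct hyperedges of $E_1$, this shows $\mathcal{H}_1$ is $1$-Sperner. The argument for $\mathcal{H}_2$ is entirely analogous: for distinct $f, f'\in E_2$, the common block $V_1$ cancels in both differences, so $(V_1\cup f)\setminus(V_1\cup f') = f\setminus f'$ (using $f, f'\subseteq V_2$ and $V_1\cap V_2=\emptyset$), and the same conclusion follows.

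The step requiring the most care is verifying the set-difference cancellations rigorously, namely that $z\notin V_1\cup V_2$ and $V_1\cap V_2 = \emptyset$ are exactly what guarantee that the added common elements contribute nothing to either side of the difference. This is where the vertex-disjointness hypothesis in the definition of gluing and the freshness of $z$ are used; once these are in place, the equalities of set differences are immediate, and the whole argument is a short bookkeeping verification rather than a genuine obstacle. I expect no substantive difficulty beyond making these disjointness conditions explicit, so the proof reduces to the two symmetric cancellation computations above.
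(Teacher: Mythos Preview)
Your argument is correct and is precisely the natural verification the paper has in mind; indeed, the paper states this result as an \emph{observation} without proof, and the cancellation of the common part ($\{z\}$ for hyperedges coming from $E_1$, and $V_1$ for those coming from $E_2$) that you spell out is exactly what makes it immediate. There is nothing to add or compare.
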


Given a vertex $z$ of a hypergraph $\mathcal{H}$, we say that a hypergraph $\mathcal{H}$ is \emph{$z$-decomposable} if for every two hyperedges $e,f\in E(\mathcal{H})$ such that $z\in e\setminus f$, we have $e\setminus\{z\}\subseteq f$. Equivalently,
$\mathcal{H}$ is $z$-decomposable if its vertex set can be partitioned as $V(\mathcal{H}) = \{z\}\cup V_1\cup V_2$ such that $\mathcal{H} = \mathcal{H}_1\odot  \mathcal{H}_2$ for some hypergraphs $\mathcal{H}_1 = (V_1,E_1)$ and $\mathcal{H}_2 = (V_2,E_2)$.

The following is the main structural result about $1$-Sperner hypergraphs.

\begin{sloppypar}
\begin{theorem}[\citet{BGM-decomposing-1-Sperner}]\label{thm:decomposition}
Every $1$-Sperner hypergraph $\mathcal{H} = (V,E)$ with $V\neq \emptyset$
is $z$-decomposable for some $z\in V(\mathcal{H})$, that is, it is the gluing of two $1$-Sperner hypergraphs.
\end{theorem}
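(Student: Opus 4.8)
The plan is to reduce the statement to finding a single vertex $z$ obeying a simple size condition, and then to invoke the equivalence (stated just before the theorem) between $z$-decomposability and being a gluing. First I would record the following reformulation: for a $1$-Sperner hypergraph, a vertex $z$ is a decomposition vertex (i.e. $z\in e\setminus f$ implies $e\setminus\{z\}\subseteq f$ for all $e,f\in E$) if and only if no hyperedge containing $z$ is strictly larger than a hyperedge avoiding $z$, that is, $\max\{|e|: z\in e\}\le \min\{|f|: z\notin f\}$ (vacuously true when one of the two families is empty). This is immediate from $1$-Spernerness: for $e\ni z$ and $f\not\ni z$, if $|e|\le |f|$ then $|e\setminus f|=1$ and, since $z\in e\setminus f$, we get $e\setminus f=\{z\}$, so $e\setminus\{z\}\subseteq f$; and if $|e|>|f|$ then $|e\setminus f|\ge 2$, so $e\setminus\{z\}\not\subseteq f$. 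Hence it suffices to produce one vertex satisfying the size condition.

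If all hyperedges have the same cardinality (this covers the degenerate cases $E=\emptyset$, $E=\{\emptyset\}$, and $|E|\le 1$), the size condition holds for \emph{every} $z$, so any vertex of the nonempty set $V$ works. Otherwise let $s_{\min}<s_{\max}$ be the smallest and largest hyperedge sizes, fix a hyperedge $f^*$ with $|f^*|=s_{\min}$ and a hyperedge $e^*$ with $|e^*|=s_{\max}$, and note that since $|f^*|<|e^*|$ the $1$-Sperner property forces $|f^*\setminus e^*|=1$. I would then take $z$ to be the unique vertex of $f^*\setminus e^*$, so that $z\in f^*$, $z\notin e^*$, and $f^*\setminus\{z\}=f^*\cap e^*\subseteq e^*$. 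The claim is that this $z$ satisfies the size condition.

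The key auxiliary observation is that, because $e^*$ is of maximum size, every hyperedge $h\neq e^*$ has $|h\setminus e^*|=1$, hence exactly one vertex outside $e^*$. To prove the claim I would argue by contradiction: suppose there are $P\ni z$ and $Q\not\ni z$ with $|P|>|Q|$. Since $z\notin e^*$ we have $P\neq e^*$, its unique vertex outside $e^*$ is $z$, and $P\setminus\{z\}\subseteq e^*$; and $Q\neq e^*$ (as $|Q|<|P|\le|e^*|$), so $Q$ has a unique outside vertex $y\neq z$ with $Q\cap e^*=Q\setminus\{y\}$. Writing $P'=P\cap e^*$ and $Q'=Q\cap e^*$, the only vertices of $P,Q$ outside $e^*$ are $z\in P\setminus Q$ and $y\in Q\setminus P$, so $1$-Spernerness applied to $P,Q$ forces $P'\subseteq Q'$ or $Q'\subseteq P'$, and $|P'|>|Q'|$ gives $Q'\subseteq P'$. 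Comparing $f^*$ and $Q$, from $|f^*|\le|Q|$ and $z\in f^*\setminus Q$ we get $f^*\setminus Q=\{z\}$, whence $f^*\setminus\{z\}=f^*\cap e^*\subseteq Q\cap e^*=Q'\subseteq P'$. Therefore $f^*=\{z\}\cup(f^*\setminus\{z\})\subseteq\{z\}\cup P'=P$, and $f^*\neq P$ (since $|f^*\setminus\{z\}|\le|Q'|<|P'|$), contradicting the Sperner property that a $1$-Sperner hypergraph has no hyperedge contained in another.

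Once such a $z$ is found, the reformulation from the first paragraph shows $\mathcal{H}$ is $z$-decomposable, so by the stated equivalent formulation $\mathcal{H}=\mathcal{H}_1\odot\mathcal{H}_2$ for some hypergraphs $\mathcal{H}_1,\mathcal{H}_2$, which are $1$-Sperner by Observation~\ref{obs:constituents}. I expect the only real difficulty to be the \emph{choice} of the decomposition vertex and the verification that it works: the potential obstruction is a ``medium-sized'' hyperedge $P$ containing $z$ that is larger than some $z$-avoiding hyperedge $Q$, and the crux of the argument is that such a configuration, together with the minimum hyperedge $f^*$, is forced into the containment $f^*\subsetneq P$, which Spernerness forbids.
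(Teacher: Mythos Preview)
The paper does not actually prove Theorem~\ref{thm:decomposition}; it is quoted from~\cite{BGM-decomposing-1-Sperner} as a preliminary result and used as a black box throughout. So there is no in-paper proof to compare your argument against.

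That said, your proof is correct. The reformulation of $z$-decomposability via the size condition $\max\{|e|:z\in e\}\le\min\{|f|:z\notin f\}$ is valid for $1$-Sperner hypergraphs: for the nontrivial direction you use that $|e|>|f|$ forces $|e\setminus f|\ge 2$, which follows from $|e\setminus f|-|f\setminus e|=|e|-|f|\ge 1$ together with $|f\setminus e|\ge 1$ (Spernerness). Your choice of $z$ as the unique vertex of $f^*\setminus e^*$, with $f^*$ of minimum and $e^*$ of maximum size, is natural, and the contradiction argument is clean: the key step is that $P\setminus Q=(P'\setminus Q')\cup\{z\}$ and $Q\setminus P=(Q'\setminus P')\cup\{y\}$, so $1$-Spernerness of $\{P,Q\}$ forces one of $P',Q'$ to contain the other, whence $Q'\subsetneq P'$ by the size inequality; then $|f^*|\le|Q|$ and $z\in f^*\setminus Q$ give $f^*\setminus\{z\}\subseteq Q\cap e^*=Q'\subseteq P'$, so $f^*\subsetneq P$, violating Spernerness. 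The degenerate cases (all hyperedges of equal size, $E=\emptyset$, $E=\{\emptyset\}$) are correctly disposed of, and the final appeal to the stated equivalence between $z$-decomposability and gluing, together with Observation~\ref{obs:constituents}, closes the argument.
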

\end{sloppypar}

\medskip
\noindent{\bf Threshold hypergraphs.} A hypergraph $\mathcal{H} )= (V,E)$ is said to be \emph{threshold} if there exist a non-negative integer weight function $w:V\to \mathbb{Z}_{\ge 0}$ and a non-negative integer threshold $t\in \mathbb{Z}_{\ge 0}$ such that for every subset $X\subseteq V$, we have
$w(X)\ge t$ if and only if $e\subseteq X$ for some $e\in E$.

Threshold hypergraphs were defined in the uniform case by Golumbic~\cite{MR562306} and studied further by
Reiterman et al.~\cite{MR791660}. In their full generality (that is, without the restriction that the hypergraph is uniform), the concept of threshold hypergraphs is equivalent to that of threshold monotone Boolean functions, see, e.g., \cite{MR0439441}. A polynomial-time recognition algorithm for threshold monotone Boolean functions represented by their complete DNF was given by Peled and Simeone~\cite{MR798011}. The algorithm is based on linear programming and implies a polynomial-time recognition algorithm for threshold hypergraphs.

The mapping that takes every hyperedge $e\in {\cal E}$ to its \emph{characteristic vector} $\chi^e\in \{0,1\}^V$, defined by
\[
\chi^e_v = \left\{
\begin{array}{ll}
1, & \hbox{if $v\in e$} \\
0, & \hbox{otherwise\,,}
\end{array}
\right.
\]
shows that the sets of hyperedges of threshold Sperner hypergraphs are in a one-to-one correspondence with the sets of minimal feasible binary solutions of the linear inequality $w^\top x\ge t$. A set of vertices $X\subseteq V$ in a hypergraph is said to be \emph{independent} if it does not contain any hyperedge, and \emph{dependent} otherwise. Thus, threshold hypergraphs are exactly the hypergraphs admitting a linear function on the vertices separating the characteristic vectors of independent sets from the characteristic vectors of dependent sets.

For later use, we recall the following theorem.

\begin{theorem}[\citet{BGM-decomposing-1-Sperner,MR3281177}]\label{thm:1-Sperner-threshold}
Every $1$-Sperner hypergraph is threshold.
\end{theorem}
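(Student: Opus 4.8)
The plan is to prove the statement by induction on the number of vertices $n=|V|$, using the decomposition theorem (Theorem~\ref{thm:decomposition}) as the engine of the induction and Observation~\ref{obs:constituents} to guarantee that the pieces produced by the decomposition are again $1$-Sperner. (A one-line alternative is available, since every $1$-Sperner hypergraph is dually Sperner, which was shown to be threshold in~\cite{MR3281177}; but I would present the self-contained argument, as it uses exactly the gluing structure just set up.) For the base case $n=0$, and more generally in the degenerate cases where $E$ contains the empty hyperedge (then every set is dependent, take $t=0$) or $E=\emptyset$ (then no set is dependent, take all-zero weights and $t=1$), the hypergraph is trivially threshold.

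For the inductive step, I would apply Theorem~\ref{thm:decomposition} to write $\mathcal{H}=\mathcal{H}_1\odot\mathcal{H}_2$ for some glue vertex $z$ and a partition $V=\{z\}\cup V_1\cup V_2$. By Observation~\ref{obs:constituents} both $\mathcal{H}_1=(V_1,E_1)$ and $\mathcal{H}_2=(V_2,E_2)$ are $1$-Sperner, and since each has strictly fewer vertices than $\mathcal{H}$, the induction hypothesis supplies threshold representations $(w_1,t_1)$ and $(w_2,t_2)$ with nonnegative integer weights. The hyperedges of $\mathcal{H}$ come in two types, as the displayed incidence matrix shows: the \emph{top} hyperedges $\{z\}\cup e$ with $e\in E_1$, and the \emph{bottom} hyperedges $V_1\cup f$ with $f\in E_2$. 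Writing $X_1=X\cap V_1$ and $X_2=X\cap V_2$ for a set $X\subseteq V$, it follows that $X$ is dependent exactly when either $z\in X$ and $X_1$ is dependent in $\mathcal{H}_1$, or $V_1\subseteq X$ and $X_2$ is dependent in $\mathcal{H}_2$. A useful simplification is that $V_1$ contains every member of $E_1$, so whenever $V_1\subseteq X$ the set $X_1=V_1$ is already dependent in $\mathcal{H}_1$; hence in the presence of $z$ the bottom condition is subsumed by the top one. Thus dependence of $X$ is governed by $X_1$ alone when $z\in X$, and by the conjunction ``$V_1\subseteq X$ and $X_2$ dependent in $\mathcal{H}_2$'' when $z\notin X$.

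The heart of the argument is then to assemble a single nonnegative integer weighting $w$ on $V$ and a threshold $t$ by a scale-separated (lexicographic) combination of $w_1$ and $w_2$. Concretely, I would give each $v\in V_1$ a weight of the form $\gamma\,w_1(v)+\beta$, give each $v\in V_2$ its own weight $w_2(v)$, and give $z$ a carefully chosen offset, where $\beta$ exceeds the total $V_2$-weight so that the additive term $\beta\,|X_1|$ detects whether $X_1=V_1$, and $\gamma$ exceeds $\beta\,|V_1|$ plus the total $V_2$-weight so that, when $z$ is present, the comparison of $w_1(X_1)$ with $t_1$ dominates and is undisturbed by the lower-order count and $V_2$ terms. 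The threshold $t$ and the offset weight of $z$ are then fixed so that the two regimes each collapse to the correct constituent inequality. The key conceptual point, which avoids a tempting but false assumption, is that full containment $V_1\subseteq X$ is certified by the \emph{count} $\beta\,|X_1|$ rather than by $w_1(X_1)$, so one need not assume that $w_1$ is strictly positive (a property that is not always available, e.g.\ for a vertex of $\mathcal{H}_1$ lying in no hyperedge).

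I expect the main obstacle to be the simultaneous verification of the two regimes with one global weighting. One must check that the chosen gaps between $\gamma$, $\beta$, and the $V_2$-weights make the $z\in X$ regime reduce exactly to $w_1(X_1)\ge t_1$ without interference from $X_2$, while the $z\notin X$ regime reduces exactly to ``$V_1\subseteq X$ together with $w_2(X_2)\ge t_2$'', and crucially that a \emph{single} offset weight on $z$ and a single threshold $t$ serve both regimes at once. Formally this reduces to showing that a short admissible interval for the weight of $z$ is nonempty; the interval turns out to have length governed by $\gamma-(\beta\,|V_1|+\text{total }V_2\text{-weight})$, which is positive precisely by the chosen scale separation. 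Once this nonemptiness is confirmed, a routine case check completes the induction.
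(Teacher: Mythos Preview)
Your proposal is correct and follows exactly the route the paper points to: the paper does not give a self-contained proof but records that the result was derived in~\cite{BGM-decomposing-1-Sperner} from the decomposition theorem (Theorem~\ref{thm:decomposition}), and also notes the alternative via dually Sperner hypergraphs from~\cite{MR3281177}, both of which you identify. Your inductive, scale-separated combination of the two constituent threshold representations is precisely the intended argument.
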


Theorem~\ref{thm:1-Sperner-threshold} was derived in~\cite{BGM-decomposing-1-Sperner} from Theorem~\ref{thm:decomposition}. However, note that every $1$-Sperner hypergraph is dually Sperner, which means that Theorem~\ref{thm:1-Sperner-threshold} also follows from the fact that dually Sperner hypergraphs are threshold, which was established earlier than Theorem~\ref{thm:decomposition} but in a non-constructive way~\cite{MR3281177}.

\medskip
\begin{sloppypar}
\noindent{\bf $k$-asummable hypergraphs.} A hypergraph is \emph{$k$-asummable} if it has no $k$ (not necessarily distinct) independent sets $A_1,\ldots, A_k$ and $k$ (not necessarily distinct) dependent sets $B_1,\ldots, B_k$ such that $$\sum_{i = 1}^k\chi^{A_i} = \sum_{i = 1}^k\chi^{B_i}\,.$$ A hypergraph is \emph{asummable} if it is $k$-asummable for every $k\ge 2$.

\begin{theorem}[Chow~\cite{Chow} and Elgot~\cite{5397278}]\label{thm:ChowElgot}
A hypergraph is threshold if and only if it is asummable.
\end{theorem}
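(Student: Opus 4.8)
The plan is to prove the two implications separately, obtaining the nontrivial direction from a theorem of the alternative. For the easy direction, suppose $\mathcal{H}=(V,E)$ is threshold, witnessed by a weight function $w\colon V\to\mathbb{Z}_{\ge 0}$ and a threshold $t$, and suppose for contradiction that it fails to be $k$-asummable for some $k\ge 2$: there are independent sets $A_1,\dots,A_k$ and dependent sets $B_1,\dots,B_k$ with $\sum_{i}\chi^{A_i}=\sum_i\chi^{B_i}$. Applying the linear functional $x\mapsto w^\top x$ to both sides, and using that $w(A_i)<t$ for independent sets while $w(B_i)\ge t$ for dependent sets, I would obtain $kt>\sum_i w(A_i)=\sum_i w(B_i)\ge kt$, a contradiction. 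Hence every threshold hypergraph is $k$-asummable for all $k$, i.e. asummable.

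For the converse I would prove the contrapositive: if $\mathcal{H}$ is not threshold, then it is not asummable. Observe first that $\mathcal{H}$ is threshold precisely when the following system in the unknowns $w\in\mathbb{R}^V$ and $t\in\mathbb{R}$ is feasible: $w^\top\chi^B-t\ge 0$ for every dependent $B$, $-w^\top\chi^A+t\ge 1$ for every independent $A$, and $w_v\ge 0$ for every $v\in V$ (the unit margin in the second family can always be arranged by scaling, and integrality of $w$ and $t$ follows afterwards, since the system has integer coefficients and hence admits a rational, and then an integral, solution whenever it admits any). Assuming $\mathcal{H}$ is not threshold, this system is infeasible, so by the Farkas/Motzkin theorem of the alternative there exist nonnegative multipliers $\beta_B$ (for the dependent inequalities), $\alpha_A$ (for the independent ones), and $\mu_v$ (for the nonnegativity constraints) such that the corresponding combination of left-hand sides vanishes identically while the combination of right-hand sides is positive. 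Reading off the coefficient of $t$ gives $\sum_A\alpha_A=\sum_B\beta_B=:k$, positivity of the right-hand side gives $k>0$, and the coefficient of each $w_v$ gives $\sum_{A\ni v}\alpha_A=\sum_{B\ni v}\beta_B+\mu_v\ge\sum_{B\ni v}\beta_B$; in vector form, $\sum_A\alpha_A\chi^A\ge\sum_B\beta_B\chi^B$ componentwise.

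It remains to turn this inequality witness into the equality demanded by the definition of summability. Since the data is rational, I would clear denominators and assume the $\alpha_A,\beta_B$ are nonnegative integers with common total $k$; listing each independent set $A$ with multiplicity $\alpha_A$ and each dependent set $B$ with multiplicity $\beta_B$ produces collections $A_1,\dots,A_k$ and $B_1,\dots,B_k$ with $\sum_i\chi^{A_i}\ge\sum_i\chi^{B_i}$. Whenever this inequality is strict in some coordinate $v$, the left-hand side is at least $1$ there, so some $A_i$ contains $v$; replacing that $A_i$ by $A_i\setminus\{v\}$ (still independent, as the family of independent sets is closed under taking subsets) decreases the left-hand side by one in coordinate $v$ and leaves the other coordinates unchanged, preserving the inequality. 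This process strictly decreases the total left-hand side and therefore terminates with $\sum_i\chi^{A_i}=\sum_i\chi^{B_i}$, the number $k$ of sets on each side being unchanged. Finally $k\ge 2$, because $k=1$ would force $A_1=B_1$ for a set that is simultaneously independent and dependent, which is impossible. Thus $\mathcal{H}$ fails $k$-asummability, so it is not asummable.

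I expect the main obstacle to be precisely this last conversion: the nonnegativity constraints on the weights are what force the theorem of the alternative to deliver only a componentwise inequality rather than the exact equality appearing in the definition, and the downward-closedness of the family of independent sets is exactly the structural feature that lets me repair it. The remaining technical points — arranging the unit margin and passing from a rational certificate to an integral one (and to integral weights) — are routine.
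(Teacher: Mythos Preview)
Your proof is correct and follows the classical route via linear programming duality (the theorem of the alternative), which is the standard argument for this result. Note, however, that the paper does not actually supply its own proof of this theorem: it is stated as a cited result attributed to Chow and Elgot, with a pointer to the monograph of Crama and Hammer for details. So there is no ``paper's proof'' to compare against beyond observing that your argument is precisely the kind of separation/duality proof one finds in those references.

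A couple of small remarks on the write-up. First, you did not explicitly verify that the threshold $t$ can be taken nonnegative (as required by the paper's definition), but this is immediate: if $\emptyset$ is independent then your margin constraint forces $t\ge 1$, and if $\emptyset$ is dependent then $w=0$, $t=0$ works. Second, the $k=1$ case can be ruled out already before the shrinking step (since $\chi^{A_1}\ge\chi^{B_1}$ means $B_1\subseteq A_1$, making $A_1$ dependent), though your argument after shrinking is equally valid. These are cosmetic; the substance is fine.
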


The result was given in terms of monotone Boolean functions; more recent and complete information on this can be found in~\cite{MR2742439}.
For later use we state explicitly the simple fact that $2$-asummability is a necessary condition for thresholdness.

\begin{corollary}\label{obs:2-asummable-threshold}
Every threshold hypergraph is $2$-asummable.
\end{corollary}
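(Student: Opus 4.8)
The plan is to argue directly by contradiction, exploiting the linearity of the weight function witnessing thresholdness together with the separation property it provides. (One could alternatively simply invoke \Cref{thm:ChowElgot}, since an asummable hypergraph is in particular $2$-asummable; but the direct argument below is short and self-contained.) Suppose $\mathcal{H} = (V,E)$ is threshold but fails to be $2$-asummable. By thresholdness, fix a weight function $w:V\to \mathbb{Z}_{\ge 0}$ and a threshold $t\in\mathbb{Z}_{\ge 0}$ such that a set $X\subseteq V$ satisfies $w(X)\ge t$ if and only if $X$ is dependent; equivalently, $X$ is independent if and only if $w(X)<t$. By the assumed failure of $2$-asummability, there exist independent sets $A_1,A_2$ and dependent sets $B_1,B_2$ with $\chi^{A_1}+\chi^{A_2}=\chi^{B_1}+\chi^{B_2}$.

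The key step is to evaluate the weight function against this vector identity. Viewing $w$ as the linear functional $x\mapsto w^\top x$ on $\{0,1\}^V$ and using $w(X)=w^\top\chi^{X}$, I would pair $w$ with both sides of $\chi^{A_1}+\chi^{A_2}=\chi^{B_1}+\chi^{B_2}$ to obtain, by linearity,
\[
w(A_1)+w(A_2)=w(B_1)+w(B_2)\,.
\]

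Finally I would compare the two sides using the separation property. Since $A_1$ and $A_2$ are independent, each has weight strictly less than $t$, so the left-hand side is strictly less than $2t$; since $B_1$ and $B_2$ are dependent, each has weight at least $t$, so the right-hand side is at least $2t$. Combining these with the displayed equality gives $2t>w(A_1)+w(A_2)=w(B_1)+w(B_2)\ge 2t$, a contradiction, which proves that $\mathcal{H}$ is $2$-asummable. There is no genuine obstacle here: the statement is an immediate consequence of linearity, and the only point requiring minimal care is the faithful translation of \emph{independent} and \emph{dependent} into the strict and non-strict weight inequalities $w(X)<t$ and $w(X)\ge t$, respectively, so that the strict inequality on the independent side and the non-strict one on the dependent side collide at the common value $2t$.
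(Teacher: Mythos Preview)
Your proof is correct. The paper does not give an explicit proof of this corollary; it simply records it as an immediate consequence of \Cref{thm:ChowElgot} (threshold $\Leftrightarrow$ asummable), from which $2$-asummability follows trivially since asummable means $k$-asummable for all $k\ge 2$. You yourself note this alternative. The difference is that the paper relies on the Chow--Elgot characterization as a black box, whereas your direct argument extracts only the easy direction (threshold $\Rightarrow$ $k$-asummable) and specializes it to $k=2$, avoiding any appeal to the nontrivial converse. Your route is thus more self-contained and makes transparent that this implication is a one-line consequence of linearity of the weight function.
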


It is also known that $2$-asummability does not imply thresholdness in general~\cite{MR2742439}, and thresholdness does not imply $1$-Spernerness (as can be seen by considering the edge set of the complete graph $K_4$).
\end{sloppypar}

\medskip
\noindent{\bf Transversal hypergraphs.}
Let $\mathcal{H} = (V,E)$ be a 
hypergraph. A \emph{transversal} of $\mathcal{H}$ is a set of vertices intersecting all hyperedges of $\mathcal{H}$. The \emph{transversal hypergraph} $\mathcal{H}^T$ is the hypergraph with vertex set $V$
in which a set $S\subseteq V$ is a hyperedge if and only if $S$ is an inclusion-minimal transversal of $\mathcal{H}$.

\begin{observation}[see, e.g., \citet{MR1013569}]\label{obs:transversals}
If $\mathcal{H}$ is a Sperner hypergraph, then $(\mathcal{H}^T)^T = \mathcal{H}$.
\end{observation}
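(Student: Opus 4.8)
The plan is to prove the two set inclusions $E(\mathcal{H})\subseteq E\bigl((\mathcal{H}^T)^T\bigr)$ and $E\bigl((\mathcal{H}^T)^T\bigr)\subseteq E(\mathcal{H})$, noting first that both sides are Sperner hypergraphs on the common vertex set $V$ (the inclusion-minimal transversals always form a Sperner family), so it suffices to match their hyperedge sets. The single tool I would set up at the outset is the complementation duality: a set $S\subseteq V$ is a transversal of $\mathcal{H}$ if and only if $S$ meets every hyperedge, equivalently if and only if $V\setminus S$ contains no hyperedge, i.e.\ $V\setminus S$ is an independent set in the sense defined above. Consequently the transversals of $\mathcal{H}$ are exactly the complements of independent sets, and the minimal transversals are the complements of the maximal ones. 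This reformulation is what makes the Sperner hypothesis usable.

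For the inclusion $E(\mathcal{H})\subseteq E\bigl((\mathcal{H}^T)^T\bigr)$, fix $e\in E(\mathcal{H})$. It is immediate that $e$ is a transversal of $\mathcal{H}^T$, since any minimal transversal $T$ of $\mathcal{H}$ meets every hyperedge of $\mathcal{H}$ and in particular $T\cap e\neq\emptyset$. The substantive point is minimality, and here I would use the Sperner property through the following construction: for each $v\in e$, the set $(V\setminus e)\cup\{v\}$ is a transversal of $\mathcal{H}$, because its complement $e\setminus\{v\}$ is a proper subset of $e$ and hence, by Spernerness, contains no hyperedge. Extracting from $(V\setminus e)\cup\{v\}$ an inclusion-minimal transversal $T$, we get $T\cap e\subseteq\{v\}$, and since $T$ must meet $e$ we obtain $T\cap e=\{v\}$ with $T\in E(\mathcal{H}^T)$. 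Now if some $e'\subsetneq e$ were also a transversal of $\mathcal{H}^T$, picking $v\in e\setminus e'$ gives $e'\cap T\subseteq e\cap T=\{v\}$ while $v\notin e'$, so $e'\cap T=\emptyset$, contradicting that $e'$ is a transversal of $\mathcal{H}^T$. Hence $e$ is a minimal transversal of $\mathcal{H}^T$, i.e.\ $e\in E\bigl((\mathcal{H}^T)^T\bigr)$.

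For the reverse inclusion, let $S$ be a minimal transversal of $\mathcal{H}^T$. I would first argue that $S$ is dependent, i.e.\ contains some hyperedge: if $S$ were independent, then $V\setminus S$ would be a transversal of $\mathcal{H}$ by the complementation duality, and any minimal transversal $T\subseteq V\setminus S$ would lie in $E(\mathcal{H}^T)$ yet satisfy $T\cap S=\emptyset$, contradicting that $S$ is a transversal of $\mathcal{H}^T$. So there is $e\in E(\mathcal{H})$ with $e\subseteq S$; by the first inclusion $e$ is itself a transversal of $\mathcal{H}^T$, and minimality of $S$ forces $S=e\in E(\mathcal{H})$. Combining the two inclusions yields $(\mathcal{H}^T)^T=\mathcal{H}$. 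The step I expect to be the main obstacle is the minimality argument in the first inclusion: verifying that each hyperedge is not merely a transversal but an \emph{inclusion-minimal} transversal of $\mathcal{H}^T$ is exactly where Spernerness enters, via the construction of a minimal transversal of $\mathcal{H}$ meeting $e$ in a single prescribed vertex.
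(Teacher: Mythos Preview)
Your proof is correct. Note, however, that the paper does not actually supply a proof of this observation: it is stated with a citation to Berge's monograph and used as a black box, so there is no ``paper's own proof'' to compare against. What you have written is the standard argument one finds in textbooks---showing first that every hyperedge of a Sperner $\mathcal{H}$ is a minimal transversal of $\mathcal{H}^T$ (with the Sperner hypothesis entering precisely in your construction of a minimal transversal meeting $e$ only in a prescribed vertex), and then using this to force any minimal transversal of $\mathcal{H}^T$ to coincide with some hyperedge of $\mathcal{H}$. Both inclusions are handled cleanly and the use of the complementation duality between transversals and independent sets is exactly the right device.
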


A pair of a mutually transversal Sperner hypergraphs naturally corresponds to a pair of dual monotone Boolean functions, see~\cite{MR2742439}.

\begin{observation}[see, e.g., \citet{MR0439441}]\label{obs:transversal-k-asummable}
For every integer $k\ge 2$,
a Sperner hypergraph $\mathcal{H}$ is $k$-asummable if and only if
its transversal hypergraph $\mathcal{H}^T$ is $k$-asummable.
\end{observation}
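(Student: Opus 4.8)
The plan is to exhibit an explicit, summability-preserving bijection between the ``witnesses'' for non-$k$-asummability of $\mathcal{H}$ and those for $\mathcal{H}^T$, namely the complementation map $X\mapsto V\setminus X$ on subsets of the common vertex set $V$. The engine of the proof is the following complementation duality between the independent/dependent sets of $\mathcal{H}$ and those of $\mathcal{H}^T$: for every $X\subseteq V$,
\begin{equation*}
X \text{ is dependent in } \mathcal{H}^T \iff V\setminus X \text{ is independent in } \mathcal{H}\,,
\end{equation*}
and, equivalently, $X$ is independent in $\mathcal{H}^T$ if and only if $V\setminus X$ is dependent in $\mathcal{H}$.

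To establish this duality, I would first recall that a set is a transversal of $\mathcal{H}$ if and only if it contains an inclusion-minimal transversal, that is, a hyperedge of $\mathcal{H}^T$; hence $X$ is dependent in $\mathcal{H}^T$ precisely when $X$ is a transversal of $\mathcal{H}$. Next, $X$ meets every hyperedge of $\mathcal{H}$ exactly when no hyperedge of $\mathcal{H}$ is contained in $V\setminus X$, i.e.\ exactly when $V\setminus X$ is independent in $\mathcal{H}$. Chaining these two equivalences yields the displayed duality, and the second form follows by negation.

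With the duality in hand, the main argument is a short computation using $\chi^{V\setminus X} = \bbone - \chi^X$. Suppose $\mathcal{H}$ is not $k$-asummable, witnessed by independent sets $A_1,\dots,A_k$ and dependent sets $B_1,\dots,B_k$ of $\mathcal{H}$ with $\sum_{i=1}^k \chi^{A_i} = \sum_{i=1}^k \chi^{B_i}$. Subtracting both sides from $k\,\bbone$ gives $\sum_{i=1}^k \chi^{V\setminus A_i} = \sum_{i=1}^k \chi^{V\setminus B_i}$. By the duality, each $V\setminus A_i$ is dependent in $\mathcal{H}^T$ and each $V\setminus B_i$ is independent in $\mathcal{H}^T$, so this is exactly a witness for the non-$k$-asummability of $\mathcal{H}^T$. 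The same computation run in reverse (or applied to $\mathcal{H}^T$, using $(\mathcal{H}^T)^T=\mathcal{H}$ from \Cref{obs:transversals} to return to $\mathcal{H}$) gives the converse implication, proving the equivalence.

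I expect the only real subtlety---hardly an obstacle---to be the careful bookkeeping of the complementation duality, in particular the observation that dependence in $\mathcal{H}^T$ coincides with being a transversal of $\mathcal{H}$ (via monotonicity of the transversal property) rather than with merely being a minimal transversal; everything else is a one-line manipulation of characteristic vectors that automatically preserves the ``not necessarily distinct'' multiset structure in the definition of $k$-asummability.
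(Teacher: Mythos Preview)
Your proof is correct. The complementation duality you state (a subset $X\subseteq V$ is dependent in $\mathcal{H}^T$ if and only if $V\setminus X$ is independent in $\mathcal{H}$) is exactly right, and the passage from a witness $(A_1,\dots,A_k;B_1,\dots,B_k)$ for $\mathcal{H}$ to the complemented witness $(V\setminus B_1,\dots,V\setminus B_k;V\setminus A_1,\dots,V\setminus A_k)$ for $\mathcal{H}^T$ via $\chi^{V\setminus X}=\bbone-\chi^X$ is clean and complete. Your invocation of \Cref{obs:transversals} for the converse is the natural way to close the loop and is where the Sperner hypothesis enters.

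There is nothing to compare against: the paper does not supply its own proof of this observation but merely cites it as folklore (Muroga~\cite{MR0439441}), in the language of dual monotone Boolean functions. Your argument is the standard one behind that citation, so in effect you have reconstructed the omitted proof.
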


\begin{corollary}[see, e.g., \citet{MR2742439}]\label{cor:transversal-threshold}
For every integer $k\ge 2$,
a Sperner hypergraph $\mathcal{H}$ is threshold if and only if
its transversal hypergraph $\mathcal{H}^T$ is threshold.
\end{corollary}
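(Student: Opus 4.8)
The plan is to deduce Corollary~\ref{cor:transversal-threshold} directly from the two preceding results, namely Theorem~\ref{thm:ChowElgot} (thresholdness is equivalent to asummability) and Observation~\ref{obs:transversal-k-asummable} (a Sperner hypergraph is $k$-asummable if and only if its transversal hypergraph is, for every fixed $k\ge 2$). The key observation is that asummability is nothing more than $k$-asummability for \emph{all} $k\ge 2$ simultaneously, so the statement for a single $k$ lifts to the statement for the whole infinite family by quantifying over $k$.

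Here is how I would carry it out. Let $\mathcal{H}$ be a Sperner hypergraph. First I would invoke Theorem~\ref{thm:ChowElgot} to rewrite both occurrences of ``threshold'' in terms of asummability: $\mathcal{H}$ is threshold if and only if $\mathcal{H}$ is asummable, and likewise $\mathcal{H}^T$ is threshold if and only if $\mathcal{H}^T$ is asummable. Here I must be slightly careful that Theorem~\ref{thm:ChowElgot} applies to $\mathcal{H}^T$ as well, but this is immediate since $\mathcal{H}^T$ is itself a hypergraph and the Chow--Elgot theorem is stated for arbitrary hypergraphs. Thus it suffices to prove that $\mathcal{H}$ is asummable if and only if $\mathcal{H}^T$ is asummable. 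Second, by the very definition of asummability, $\mathcal{H}$ is asummable precisely when it is $k$-asummable for every $k\ge 2$, and the same for $\mathcal{H}^T$. So I would expand both sides of the desired equivalence into the conjunctions $\bigwedge_{k\ge 2}(\mathcal{H}\text{ is }k\text{-asummable})$ and $\bigwedge_{k\ge 2}(\mathcal{H}^T\text{ is }k\text{-asummable})$.

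Finally, I would apply Observation~\ref{obs:transversal-k-asummable}, which gives, for each individual $k\ge 2$, the equivalence that $\mathcal{H}$ is $k$-asummable iff $\mathcal{H}^T$ is $k$-asummable. Taking the conjunction of these equivalences over all $k\ge 2$ yields that the two infinite conjunctions above are equivalent, i.e.\ $\mathcal{H}$ is asummable iff $\mathcal{H}^T$ is asummable. Combining this with the two applications of Theorem~\ref{thm:ChowElgot} from the first step closes the chain: $\mathcal{H}$ threshold $\iff$ $\mathcal{H}$ asummable $\iff$ $\mathcal{H}^T$ asummable $\iff$ $\mathcal{H}^T$ threshold.

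There is essentially no genuine obstacle here; the corollary is a routine packaging of the two cited results, and the only thing to watch is the hypothesis bookkeeping. Specifically, Observation~\ref{obs:transversal-k-asummable} requires $\mathcal{H}$ to be Sperner, so I would note that the Sperner assumption in the corollary is exactly what licenses its use (and is also what makes $\mathcal{H}^T$ well-behaved via Observation~\ref{obs:transversals}). I would also record that although the corollary is phrased ``for every integer $k\ge 2$'', the parameter $k$ does not actually appear in the conclusion, which speaks only about thresholdness; the phrasing simply signals that the proof rests on the $k$-indexed Observation~\ref{obs:transversal-k-asummable} invoked across all $k$. No calculation is needed beyond these logical manipulations.
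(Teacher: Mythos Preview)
Your proposal is correct and matches the intended derivation: the paper does not supply an explicit proof for this corollary but positions it immediately after Theorem~\ref{thm:ChowElgot} and Observation~\ref{obs:transversal-k-asummable}, from which it follows exactly as you describe (threshold $\Leftrightarrow$ asummable $\Leftrightarrow$ $k$-asummable for all $k\ge 2$, then apply the observation termwise). Your remark that the quantifier ``for every integer $k\ge 2$'' is vestigial is also accurate.
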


\medskip
\noindent{\bf Clique hypergraphs and conformal hypergraphs.} Given a graph $G$, the \emph{clique hypergraph of $G$} is the hypergraph $\mathcal{C}(G)$ with vertex set $V(G)$ in which the hyperedges are exactly the (inclusion-)maximal cliques of $G$. A hypergraph is $\mathcal{H}$ is \emph{conformal} if for every set $X\subseteq V(\mathcal{H})$ such that every pair of elements in $X$ is contained in a hyperedge, there exists a hyperedge containing $X$.

\begin{theorem}[\citet{MR1013569}]\label{thm:Berge}
A Sperner hypergraph is conformal if and only if it is the clique hypergraph of a graph.
\end{theorem}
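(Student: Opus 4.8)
The plan is to prove both implications directly, with the reverse implication amounting to a routine verification and the forward implication requiring the construction of a suitable graph.

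For the ``if'' direction, suppose $\mathcal{H} = \mathcal{C}(G)$ for some graph $G$. First, $\mathcal{C}(G)$ is Sperner because no maximal clique can properly contain another. For conformality, take any $X \subseteq V(G)$ in which every pair of vertices lies in a common hyperedge, that is, in a common maximal clique; then every pair in $X$ is adjacent in $G$, so $X$ is a clique, and hence $X$ is contained in some maximal clique of $G$, which is a hyperedge. Thus $\mathcal{C}(G)$ is conformal.

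For the ``only if'' direction, let $\mathcal{H} = (V,E)$ be a Sperner and conformal hypergraph. The natural candidate is the \emph{co-occurrence graph} $G$ of $\mathcal{H}$: the graph on vertex set $V$ in which two distinct vertices are adjacent if and only if they are contained together in some hyperedge of $\mathcal{H}$. I would then prove that the set of maximal cliques of $G$ equals $E$, which is exactly the assertion $\mathcal{H} = \mathcal{C}(G)$. This breaks into three steps. (i)~Every hyperedge $e \in E$ is a clique of $G$, since any two vertices of $e$ co-occur in $e$ and are therefore adjacent. (ii)~Every hyperedge is in fact a \emph{maximal} clique: if some $w \notin e$ were adjacent in $G$ to all vertices of $e$, then every pair of $e \cup \{w\}$ would lie in a common hyperedge, so conformality would give a hyperedge $f \supseteq e \cup \{w\}$, whence $e \subsetneq f$, contradicting the Sperner property. (iii)~Conversely, every maximal clique $C$ of $G$ is a hyperedge: all pairs in $C$ co-occur in a hyperedge, so conformality yields a hyperedge $f \supseteq C$; since $f$ is itself a clique of $G$ by step (i) and $C$ is a maximal clique, we get $C = f \in E$.

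The essential point, and the only place where the two hypotheses genuinely interact, is in steps (ii) and (iii): conformality is precisely what lets us pass from pairwise co-occurrence (adjacency in $G$) to joint containment in a single hyperedge, while the Sperner property guarantees that the hyperedges obtained this way are inclusion-maximal cliques rather than proper subsets of larger hyperedges. Once these two steps are in place, the identification $\mathcal{H} = \mathcal{C}(G)$ is immediate, so I do not anticipate a serious technical obstacle; the main care needed is simply to confirm that both defining hypotheses are actually used and that the co-occurrence graph is the correct object to build.
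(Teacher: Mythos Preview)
Your proof is correct and is in fact the standard argument for Berge's theorem. Note, however, that the paper does not give its own proof of this statement: it is simply cited from \citet{MR1013569} as a known preliminary result and used without proof, so there is no proof in the paper against which to compare yours.
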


\medskip
\noindent{\bf Interrelations between the considered classes of graphs and hypergraphs.} In Fig.~\ref{fig:Hasse}, we show the Hasse diagram of the partial order of the hypergraph classes mentioned above, along with some classes of graphs, ordered with respect to inclusion.
The diagram includes equalities between the following classes of hypergraphs:
\begin{itemize}
  \item threshold and asummable (Theorem~\ref{thm:ChowElgot});
  \item $2$-uniform threshold, $2$-uniform $2$-asummable, and threshold graphs (Theorem~\ref{thm:threshold-graphs});
  \item conformal $1$-Sperner, conformal Sperner threshold, conformal Sperner $2$-asummable hypergraphs, and clique hypergraphs of threshold graphs (Theorems~\ref{thm:Berge} and \ref{thm:threshold}).
\end{itemize}

\begin{figure}[h!]
  \centering
   \includegraphics[width=\textwidth]{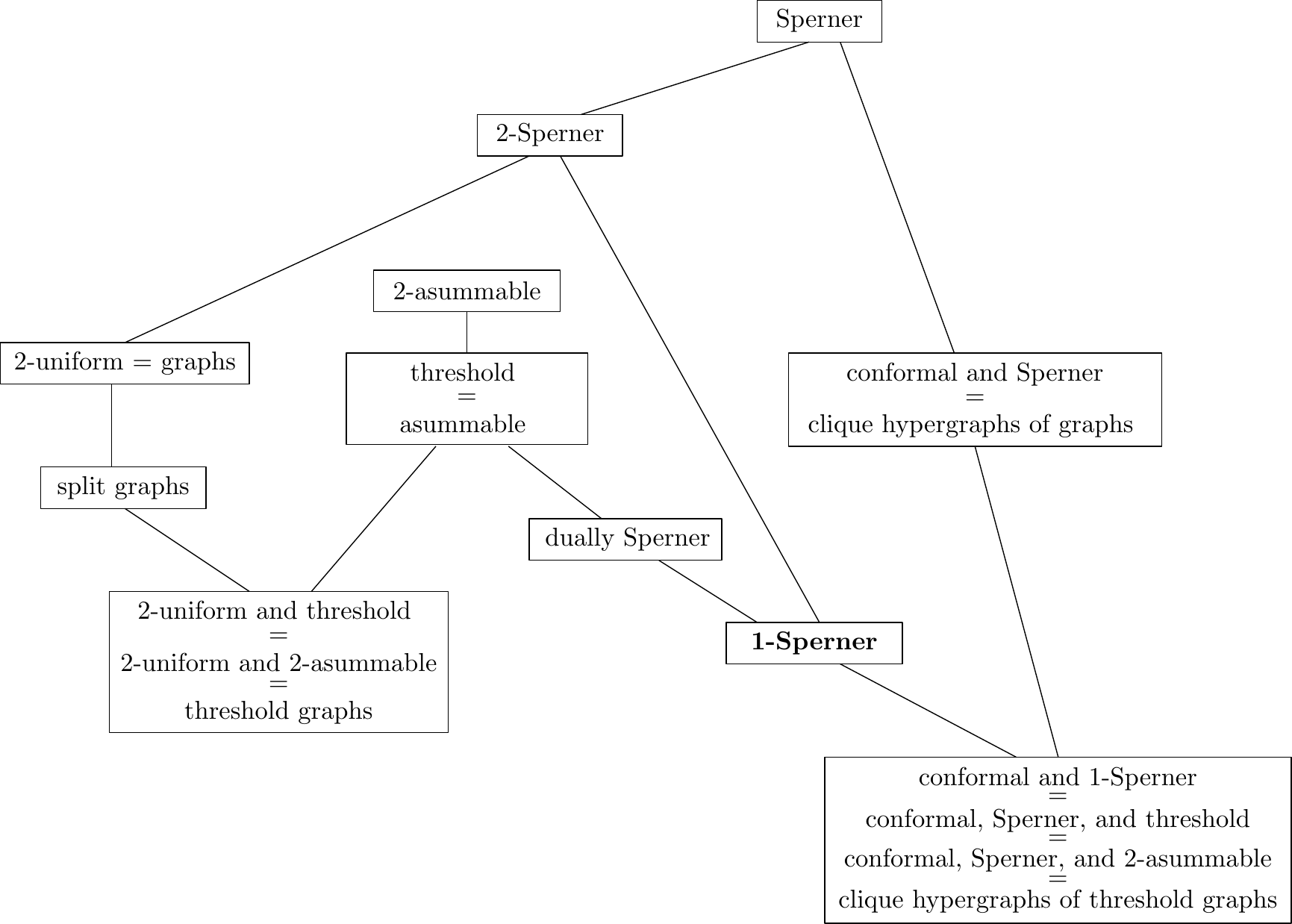}
  \caption{Inclusion relations between several classes of graphs and hypergraphs.}\label{fig:Hasse}
\end{figure}

The fact that every threshold graph is split follows from Theorem~\ref{thm:char2}, while the fact that every dually Sperner hypergraph is threshold was proved by~\citet{MR3281177}. The remaining inclusions are trivial or follow by transitivity.

Finally, the following examples show that all inclusions are strict and that there are no other inclusions (in particular, there are no other equalities):
\begin{itemize}
  \item the complete graph $K_3$ is a $2$-uniform $1$-Sperner hypergraph that it is not conformal;
  \item the complete graph $K_4$ is a threshold graph and, when viewed as a $2$-uniform hypergraph, it is threshold and Sperner but neither dually Sperner nor conformal;
  \item the path $P_4$ is a split graph but, as a $2$-uniform hypergraph,
      it is conformal and Sperner but not $2$-asummable;
  \item the clique hypergraph of the complete graph $K_3$ (which is threshold) is not $2$-uniform;
  \item the hypergraph with vertex set $\{1\}$ and hyperedge set $\{\emptyset,\{1\}\}$ is dually Sperner but not Sperner,
  \item the clique hypergraph of the graph $2K_3$ is conformal and Sperner but not $2$-Sperner;
 \item an example of a $2$-asummable non-threshold hypergraph is given, e.g., in~\cite{MR2742439,CM-ISAIM2014}.
\end{itemize}
The remaining non-inclusions follow by transitivity.

\section{Characterizations of threshold and domishold graphs}\label{sec:old-results}

In this section, we consider the characterizations of threshold and domishold graphs due to \citet{MR0479384} and~\citet{MR0491342} and show that they imply that for several families of hypergraphs derived from graphs, some or all of the three (generally properly nested) properties of $1$-Spernerness, thresholdness, and $2$-asummability coincide.

\subsection{Threshold graphs}\label{sec:threshold-old}

The first result along these lines follows easily from the characterizations of threshold graphs by forbidden induced subgraphs due to~Chv\'atal and Hammer, see e.g., Corollary 1D~\cite{MR0479384}.

\begin{theorem}[\citet{MR0479384}]\label{thm:threshold-graphs}
For a graph $G$, the following statements are equivalent:
\begin{enumerate}
  \item[(1)] $G$ is a threshold graph.
  \item[(2)] $G$ is a ($2$-uniform) threshold hypergraph.
  \item[(3)] $G$ is a ($2$-uniform) $2$-asummable hypergraph.
\end{enumerate}
\end{theorem}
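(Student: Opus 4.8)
The plan is to establish the cycle of implications $(1)\Rightarrow(2)\Rightarrow(3)\Rightarrow(1)$. The first implication rests on a simple dictionary: for the $2$-uniform hypergraph $\mathcal{H}$ whose hyperedges are exactly the edges of $G$, a set $X\subseteq V$ is independent in the hypergraph sense (it contains no hyperedge) if and only if $X$ is an independent set of the graph $G$; consequently the dependent sets of $\mathcal{H}$ are precisely the non-independent sets of $G$. Thus, given a weight function $w$ and threshold $t$ witnessing that $G$ is a threshold graph (so that $w(X)\le t$ iff $X$ is independent), I would merely flip the inequality: since the weights are non-negative integers, every dependent set $X$ satisfies $w(X)\ge t+1$, and taking $t' := t+1\ge 0$ gives $w(X)\ge t'$ iff $X$ is dependent, which is exactly the definition of a threshold hypergraph. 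Hence $(1)\Rightarrow(2)$.

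The implication $(2)\Rightarrow(3)$ is immediate from \Cref{obs:2-asummable-threshold}, which states that every threshold hypergraph is $2$-asummable. The substance of the theorem therefore lies in $(3)\Rightarrow(1)$, which I would prove using the forbidden induced subgraph characterization of threshold graphs (\Cref{thm:char1}): it suffices to show that if $G$, viewed as a $2$-uniform hypergraph, is $2$-asummable, then $G$ is $\{P_4,C_4,2K_2\}$-free. I would argue the contrapositive: each of the three obstructions is $2$-summable, and a $2$-summable configuration supported on an induced subgraph transfers verbatim to $G$, since the two independent sets remain independent in $G$ and the two dependent sets still contain an edge of $G$ because the subgraph is induced. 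Hence any graph containing one of the three as an induced subgraph is itself $2$-summable.

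The key step is therefore the explicit $2$-summable witness for $P_4$, $C_4$, and $2K_2$. Pleasantly, a single labeling handles all three: writing the vertices as $a,b,c,d$ so that $ab$ and $cd$ are edges while $\{a,c\}$ and $\{b,d\}$ are independent (each of the three graphs is bipartite with parts $\{a,c\}$, $\{b,d\}$ and contains the matching $\{ab,cd\}$), I set $A_1=\{a,c\}$, $A_2=\{b,d\}$, $B_1=\{a,b\}$, $B_2=\{c,d\}$. Then $A_1,A_2$ are independent, $B_1,B_2$ are dependent, and both sides of $\chi^{A_1}+\chi^{A_2}=\chi^{B_1}+\chi^{B_2}$ equal the all-ones vector on $\{a,b,c,d\}$, exhibiting $2$-summability. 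Combined with \Cref{thm:char1}, this yields $(3)\Rightarrow(1)$ and closes the cycle. I expect the main (though modest) obstacle to be purely bookkeeping: ensuring that the inequality-flip in $(1)\Rightarrow(2)$ keeps the threshold a non-negative integer, and checking that the same four-set witness is valid under the correct vertex labeling of each of the three forbidden graphs.
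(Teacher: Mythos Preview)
Your proposal is correct and follows precisely the route the paper indicates: the paper does not spell out a proof of this theorem but merely remarks that it ``follows easily from the characterizations of threshold graphs by forbidden induced subgraphs due to Chv\'atal and Hammer,'' i.e., via \Cref{thm:char1}. Your cycle $(1)\Rightarrow(2)\Rightarrow(3)\Rightarrow(1)$, with the last step exhibiting the common $2$-summable witness $\{a,c\},\{b,d\}$ versus $\{a,b\},\{c,d\}$ for each of $P_4$, $C_4$, $2K_2$, is exactly this approach made explicit, and the bookkeeping (flipping the threshold to $t+1$ and noting that the witness transfers to any graph containing the induced obstruction) is handled correctly.
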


%
%
%
%

To see that the list above cannot be extended with the property of $G$ being $1$-Sperner, consider, for example, the graph $K_4$. This is a threshold graph but it is not $1$-Sperner.

Threshold graphs have many other characterizations (see, e.g.,~\cite{MR1417258}). We now prove several more, which can also be seen as characterizations of certain classes of hypergraphs.

A \emph{vertex cover} of a graph $G$ is a set $S\subseteq V(G)$ such that every edge of $G$ has at least one endpoint in $S$. Given a graph $G$, the \emph{vertex cover hypergraph} $\mathcal{VC}(G)$ is the hypergraph with vertex set $V(G)$ in which a set $S\subseteq V(G)$ is a hyperedge if and only if $S$ is an inclusion-minimal vertex cover.
In contrast to~\Cref{thm:threshold-graphs}, if we focus on
characterizing thresholdness in terms of the vertex cover hypergraph,
then the property of $1$-Spernerness appears.

\begin{theorem}\label{thm:threshold-graphs-2}
For a graph $G$, the following statements are equivalent:
\begin{enumerate}
  \item[(1)] $G$ is threshold.
  \item[(2)] The vertex cover hypergraph $\mathcal{VC}(G)$ is $1$-Sperner.
  \item[(3)] The vertex cover hypergraph $\mathcal{VC}(G)$ is threshold.
  \item[(4)] The vertex cover hypergraph $\mathcal{VC}(G)$ is $2$-asummable.
\end{enumerate}
\end{theorem}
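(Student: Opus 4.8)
The plan is to establish the cycle of implications $(1)\Rightarrow(2)\Rightarrow(3)\Rightarrow(4)\Rightarrow(1)$. Two of these are immediate from results already recorded: the implication $(2)\Rightarrow(3)$ is exactly Theorem~\ref{thm:1-Sperner-threshold} (every $1$-Sperner hypergraph is threshold), and $(3)\Rightarrow(4)$ is Corollary~\ref{obs:2-asummable-threshold} (every threshold hypergraph is $2$-asummable). Thus the real content lies in the two remaining implications $(4)\Rightarrow(1)$ and $(1)\Rightarrow(2)$.

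For $(4)\Rightarrow(1)$ I would exploit transversal duality. The key observation is that $\mathcal{VC}(G)$ is precisely the transversal hypergraph of the \emph{edge hypergraph} $\mathcal{E}(G)$, that is, the $2$-uniform hypergraph whose hyperedges are the edges of $G$: a set of vertices is a vertex cover exactly when it meets every edge, so the inclusion-minimal transversals of $\mathcal{E}(G)$ are the inclusion-minimal vertex covers, giving $\mathcal{VC}(G)=\mathcal{E}(G)^{T}$. Both $\mathcal{E}(G)$ and $\mathcal{VC}(G)$ are Sperner, so Observation~\ref{obs:transversal-k-asummable} (with $k=2$) yields that $\mathcal{VC}(G)$ is $2$-asummable if and only if $\mathcal{E}(G)$ is $2$-asummable. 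Finally, $\mathcal{E}(G)$ is just $G$ regarded as a $2$-uniform hypergraph, so by the equivalence of statements $(1)$ and $(3)$ in Theorem~\ref{thm:threshold-graphs}, $\mathcal{E}(G)$ is $2$-asummable if and only if $G$ is threshold. Chaining these equivalences gives $(1)\Leftrightarrow(4)$, and in particular $(4)\Rightarrow(1)$.

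The main obstacle is $(1)\Rightarrow(2)$: showing that the minimal vertex covers of a threshold graph form a $1$-Sperner hypergraph. Here I would argue by induction on $|V(G)|$ using the construction of threshold graphs from Corollary~\ref{cor:threshold}, whereby $G$ is obtained from a smaller threshold graph $G_0$ by adding either an isolated vertex or a universal vertex $w$; the base case $G=K_1$ has the single hyperedge $\emptyset$ and is vacuously $1$-Sperner. If $w$ is isolated, then $w$ lies in no minimal vertex cover and the hyperedges of $\mathcal{VC}(G)$ coincide with those of $\mathcal{VC}(G_0)$, so $1$-Spernerness is inherited. If $w$ is universal, I would first identify the minimal vertex covers of $G$ explicitly: they are $V(G_0)$ itself (the unique one avoiding $w$, forced because $w$ is adjacent to every other vertex) together with the sets $\{w\}\cup S$ for each minimal vertex cover $S$ of $G_0$. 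Given two such hyperedges, $1$-Spernerness follows by a short case check: two hyperedges of the second type differ exactly as the corresponding sets $S$ do, which is controlled by the inductive hypothesis, while comparing $V(G_0)$ with $\{w\}\cup S$ produces a difference of the single vertex $w$ on one side.

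The one point requiring care in the universal-vertex step is to verify that $V(G_0)$ is genuinely an inclusion-minimal vertex cover of $G$ and that each minimal vertex cover $S$ of $G_0$ satisfies $S\subsetneq V(G_0)$, so that the relevant set differences are nonempty and equal to $\{w\}$ on the appropriate side; this rests on the elementary fact that $V(G_0)$ is never a minimal vertex cover of $G_0$ once $|V(G_0)|\ge 2$. Alternatively, the universal-vertex step can be phrased structurally: the computation above shows $\mathcal{VC}(G)=\mathcal{VC}(G_0)\odot(\emptyset,\{\emptyset\})$ is a gluing of two $1$-Sperner hypergraphs, and one checks that the sole exceptional configuration of the gluing operation—which would arise only if $V(G_0)$ were itself a hyperedge of $\mathcal{VC}(G_0)$—does not occur here, so the gluing is again $1$-Sperner. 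Either formulation closes the cycle and completes the proof.
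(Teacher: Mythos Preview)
Your proof is correct. The implications $(2)\Rightarrow(3)\Rightarrow(4)$ and $(4)\Rightarrow(1)$ match the paper's argument essentially verbatim. The difference lies in $(1)\Rightarrow(2)$: the paper argues the contrapositive via the forbidden induced subgraph characterization (Theorem~\ref{thm:char1}), taking two minimal vertex covers $S,S'$ with $\min\{|S\setminus S'|,|S'\setminus S|\}\ge 2$, observing that $S\setminus S'$ and $S'\setminus S$ are independent, and showing that the bipartite graph they induce has no isolated vertices, hence contains an induced $P_4$, $C_4$, or $2K_2$. Your inductive argument based on Corollary~\ref{cor:threshold} is a genuinely different route; it is a bit longer but has the appealing feature that the universal-vertex step exhibits $\mathcal{VC}(G)$ explicitly as a gluing $\mathcal{VC}(G_0)\odot(\emptyset,\{\emptyset\})$, tying the result directly to the $1$-Sperner decomposition machinery that drives the rest of the paper. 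One small remark: the fact you need, that $V(G_0)$ is not a minimal vertex cover of $G_0$, already holds for $|V(G_0)|\ge 1$, not just $|V(G_0)|\ge 2$; this matters for the step from $K_1$ to $K_2$.
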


\begin{sloppypar}
\begin{proof}
The chain of implications $(2)\Rightarrow (3)\Rightarrow (4)$
follows from Theorem~\ref{thm:1-Sperner-threshold} and \Cref{obs:2-asummable-threshold}.
Since the vertex cover hypergraph $\mathcal{VC}(G)$ is Sperner and it is the transversal hypergraph of the $2$-uniform hypergraph $G$, \Cref{obs:transversals} implies that $(\mathcal{VC}(G))^T = G$.
Therefore, by~\Cref{obs:transversal-k-asummable}, $\mathcal{VC}(G)$ is $2$-asummable if and only if $G$ is $2$-asummable, which together with Theorem~\ref{thm:threshold-graphs} yields $(4)\Rightarrow (1)$.

For the implication $(1)\Rightarrow (2)$, suppose that the vertex cover hypergraph $\mathcal{VC}(G)$ is not $1$-Sperner. We will prove that $G$ is not threshold by showing that $G$ contains an induced subgraph isomorphic to
$P_4$, $C_4$, or $2K_2$. Since $\mathcal{VC}(G)$ is Sperner but not $1$-Sperner, there exist two minimal vertex covers $S$ and $S'$ of $G$ such that $\min\{|S\setminus S'|,|S'\setminus S|\}\ge 2$. Since $S$ is a vertex cover, the set $S'\setminus S$ is independent. Similarly, $S\setminus S'$ is independent. We claim that the bipartite subgraph of $G$
induced by $(S\setminus S')\cup(S'\setminus S)$
has no isolated vertices. By symmetry it suffices to show that every vertex $v\in S\setminus S'$ has a neighbor in $S'\setminus S$. If this were not the case, then by the minimality of $S$ we infer that $v$ would have a neighbor in $V\setminus (S\cup S')$. However, this would contradict the fact that $S'$ is a vertex cover. Since every bipartite graph without isolated vertices and having at least two vertices in both sides of a bipartition has an induced $P_4$, $C_4$, or $2K_2$, we conclude that $G$ is not threshold by \Cref{thm:char1}.
\end{proof}
\end{sloppypar}

We show next that in the class of conformal Sperner hypergraphs, the notion of thresholdness coincides with both $2$-asummability and $1$-Spernerness.
Moreover, it exactly characterizes threshold graphs. Recall that a Sperner hypergraph is conformal if and only if it is the clique hypergraph of a graph (\Cref{thm:Berge}).

\begin{theorem}\label{thm:threshold}
For a graph $G$, the following statements are equivalent:
\begin{enumerate}
  \item[(1)] $G$ is threshold.
  \item[(2)] The clique hypergraph $\mathcal{C}(G)$ is $1$-Sperner.
  \item[(3)] The clique hypergraph $\mathcal{C}(G)$ is threshold.
  \item[(4)] The clique hypergraph $\mathcal{C}(G)$ is $2$-asummable.
\end{enumerate}
\end{theorem}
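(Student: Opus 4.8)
The plan is to mirror the structure of the proof of \Cref{thm:threshold-graphs-2}, replacing the vertex cover hypergraph by the clique hypergraph $\mathcal{C}(G)$ and using the fact that $\mathcal{C}(G)$ is Sperner (maximal cliques cannot contain one another). The easy chain $(2)\Rightarrow(3)\Rightarrow(4)$ is immediate: $1$-Spernerness implies thresholdness by \Cref{thm:1-Sperner-threshold}, and thresholdness implies $2$-asummability by \Cref{obs:2-asummable-threshold}. So the real content lies in closing the cycle, namely in proving $(4)\Rightarrow(1)$ and $(1)\Rightarrow(2)$.

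For $(1)\Rightarrow(2)$, I would argue by contradiction, exactly as in \Cref{thm:threshold-graphs-2}. Suppose $\mathcal{C}(G)$ is not $1$-Sperner. Since $\mathcal{C}(G)$ is Sperner, there are two maximal cliques $Q,Q'$ with $\min\{|Q\setminus Q'|,|Q'\setminus Q|\}\ge 2$. Pick distinct $a,b\in Q\setminus Q'$ and distinct $c,d\in Q'\setminus Q$. Then $a,b$ are adjacent (both in $Q$) and $c,d$ are adjacent (both in $Q'$). The key point is maximality: since $Q$ is maximal and $c\notin Q$, some vertex of $Q$ is non-adjacent to $c$; I would use the freedom in choosing $a,b$ together with maximality of both $Q$ and $Q'$ to force a non-edge between the pair $\{a,b\}$ and the pair $\{c,d\}$. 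Concretely, I expect to extract four vertices inducing one of $P_4$, $C_4$, or $2K_2$ on the two edges $ab$ and $cd$ plus the cross non-adjacencies, and then conclude by \Cref{thm:char1} that $G$ is not threshold. The main obstacle here is the bookkeeping: a single vertex of $Q\setminus Q'$ need not be non-adjacent to a single vertex of $Q'\setminus Q$, so I must use maximality carefully to guarantee that \emph{some} choice of the four vertices yields a cross non-edge (if all four cross pairs were edges, then $\{a,b,c,d\}$ would be a clique extending $Q$, contradicting maximality).

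For $(4)\Rightarrow(1)$, the plan is to invoke \Cref{thm:Berge}: a Sperner hypergraph is conformal if and only if it is the clique hypergraph of a graph, and $\mathcal{C}(G)$ is conformal. I would then use \Cref{obs:transversal-k-asummable}, which states that a Sperner hypergraph $\mathcal{H}$ is $k$-asummable iff $\mathcal{H}^T$ is, to transfer $2$-asummability of $\mathcal{C}(G)$ to its transversal, and combine this with \Cref{thm:threshold-graphs} applied to an appropriate $2$-uniform hypergraph associated to $G$. Alternatively, and perhaps more cleanly, I would close the cycle by proving the contrapositive $(1)\Rightarrow(2)$ already gives one direction, and establish a separate implication such as $(4)\Rightarrow(2)$ or $(4)\Rightarrow(1)$ directly: assuming $\mathcal{C}(G)$ is $2$-asummable, show $G$ is $\{P_4,C_4,2K_2\}$-free by checking that each of these three obstructions yields two maximal cliques witnessing a violation of $2$-asummability. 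The cleanest route is likely to verify directly that if $G$ contains an induced $P_4$, $C_4$, or $2K_2$, then $\mathcal{C}(G)$ fails to be $2$-asummable, giving $(\text{not }1)\Rightarrow(\text{not }4)$ and hence $(4)\Rightarrow(1)$; the anticipated difficulty is producing explicit independent and dependent sets $A_1,A_2,B_1,B_2$ with $\chi^{A_1}+\chi^{A_2}=\chi^{B_1}+\chi^{B_2}$ for each obstruction, which requires understanding the maximal cliques of $G$ through the relevant induced subgraph.
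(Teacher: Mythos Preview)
Your plan for $(2)\Rightarrow(3)\Rightarrow(4)$ and for $(1)\Rightarrow(2)$ matches the paper's approach. One refinement: your parenthetical ``if all four cross pairs were edges, then $\{a,b,c,d\}$ would be a clique extending $Q$'' only yields a single cross non-edge, which is not enough to force an induced $P_4$, $C_4$, or $2K_2$ on those four vertices. The paper handles this cleanly by observing that, by maximality of $Q$ and $Q'$, \emph{every} vertex of $Q'\setminus Q$ has a non-neighbor in $Q\setminus Q'$ and vice versa (any non-neighbor in $Q$ must lie outside $Q\cap Q'$ since $Q'$ is a clique). Hence the complement of $G$ restricted to $(Q\setminus Q')\cup(Q'\setminus Q)$ is a bipartite graph with at least two vertices on each side and no isolated vertices, and any such graph contains an induced $P_4$, $C_4$, or $2K_2$; since this family is self-complementary, so does $G$.

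The genuine gap is in $(4)\Rightarrow(1)$. Your first route via \Cref{obs:transversal-k-asummable} and \Cref{thm:threshold-graphs} does not work: $\mathcal{C}(G)^T$ is not $2$-uniform (a set is independent in the hypergraph $\mathcal{C}(G)$ iff it contains no maximal clique of $G$, which is \emph{not} the same as being independent in the graph $G$), so there is no ``appropriate $2$-uniform hypergraph'' to feed into \Cref{thm:threshold-graphs}. Your second route---show that an induced $P_4$, $C_4$, or $2K_2$ forces a $2$-summability witness---is the right one and is what the paper does, but you have understated the difficulty. Given maximal cliques $A\supseteq\{a,b\}$ and $B\supseteq\{c,d\}$ with $\{a,c\},\{b,d\}\notin E(G)$, the paper uses $B_1=A$, $B_2=B$ and the identity
\[
\chi^A+\chi^B=\chi^{(A\cup B)\setminus\{a,c\}}+\chi^{(A\cap B)\cup\{a,c\}}\,.
\]
The hard part is proving that the two sets on the right are independent in $\mathcal{C}(G)$, i.e., contain no maximal clique. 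This is not automatic and requires a preliminary structural fact: if $\mathcal{C}(G)$ is $2$-asummable then for any two maximal cliques $A,B$ one has no edges between $A\setminus B$ and $B\setminus A$ (the paper's Claim~2), from which it follows that $(A\cup B)\setminus\{a,b\}$ is independent for any $a\in A\setminus B$, $b\in B\setminus A$ (Claim~3). Without these intermediate claims, you cannot certify that your candidate $A_1,A_2$ are independent in $\mathcal{C}(G)$, and the argument stalls.
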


\begin{proof}
For the implication $(1)\Rightarrow (2)$, suppose that the clique hypergraph $\mathcal{C}(G)$ is not $1$-Sperner.
Since $\mathcal{C}(G)$ is Sperner but not $1$-Sperner, there exist two maximal cliques $S$ and $S'$ of $G$ such that $\min\{|S\setminus S'|,|S'\setminus S|\}\ge 2$. Clearly, $S\setminus S'$ and $S'\setminus S$ are cliques.
By the maximality of $S$, every vertex in $S'\setminus S$ has a non-neighbor in $S\setminus S'$. Similarly, every vertex in $S\setminus S'$ has a non-neighbor in $S'\setminus S$. Thus, the complement of the subgraph of $G$ induced by $(S\setminus S')\cup(S'\setminus S)$ is a bipartite graph with at least two vertices in both sides and without isolated vertices.
Therefore, it contains an induced $P_4$, $C_4$, or $2K_2$, and since this family of graphs is closed under complementation, we conclude that $G$ is not threshold by \Cref{thm:char1}.


The chain of implications $(2)\Rightarrow (3)\Rightarrow (4)$
follows from Theorem~\ref{thm:1-Sperner-threshold}
and \Cref{obs:2-asummable-threshold}.

Now we prove the implication $(4)\Rightarrow (1)$. We break the proof into a small series of claims.

\begin{claim}\label{cl-1}
For any graph $G$, if $A,B\in \mathcal{C}(G)$, $a\in A\setminus B$, and $b\in B\setminus A$ such that $(a,b)\in E(G)$, then the set $(A\setminus \{a\})\cup\{b\}$ is an independent set of the hypergraph $\mathcal{C}(G)$.
\end{claim}

\begin{proof}
If $C\subseteq (A\setminus \{a\})\cup\{b\}$ is a clique of $G$, then $C\cup\{a\}$ is also a clique of $G$, and hence $C$ is not maximal.
\end{proof}

\begin{claim}\label{cl-2}
If $\mathcal{C}(G)$ is $2$-asummable, and  $A,B\in \mathcal{C}(G)$, $a\in A\setminus B$, and $b\in B\setminus A$ then $(a,b)\not\in E(G)$.
\end{claim}
\begin{proof}
If $(a,b)\in E(G)$ then Claim \ref{cl-1} and the equality
\[
\chi^A+\chi^B ~=~ \chi^{(A\setminus \{a\})\cup\{b\}} + \chi^{(B\setminus\{b\})\cup\{a\}}
\]
contradicts the $2$-asummability of $\mathcal{C}(G)$.
\end{proof}

\begin{claim}\label{cl-3}
If $\mathcal{C}(G)$ is $2$-asummable, and  $A,B\in \mathcal{C}(G)$, $a\in A\setminus B$, and $b\in B\setminus A$, then the set $(A\cup B)\setminus \{a,b\}$ is an independent set of the hypergraph $\mathcal{C}(G)$.
\end{claim}
\begin{proof}
If $C\subseteq (A\cup B)\setminus \{a,b\}$ is a maximal clique of $G$, then
\begin{itemize}
\item $C\cup \{a\}$ is not a clique, and hence there exists a vertex $u\in (B\cap C)\setminus A$ (such that $\{a,u\}\not\in E(G)$);
\item $C\cup \{b\}$ is not a clique, and hence there exists a vertex $v\in (A\cap C)\setminus B$ (such that $\{b,v\}\not\in E(G)$).
\end{itemize}
Consequently we have vertices $u\in B\setminus A$ and $v\in A\setminus B$ such that $\{u,v\}\subseteq C$ implying $\{u,v\}\in E(G)$. This contradicts Claim \ref{cl-2}, which proves that such a maximal clique $C$ cannot exist.
\end{proof}

\begin{claim}\label{cl-4}
If $\mathcal{C}(G)$ is $2$-asummable then $G$ is a threshold graph.
\end{claim}

\begin{proof}
Let us recall the forbidden subgraph characterization of threshold graphs by Theorem \ref{thm:char1}, and assume indirectly that there exist four distinct vertices $\{a,b,c,d\}\subseteq V(G)$ such that $\{a,b\}, \{c,d\}\in E(G)$ and $\{a,c\}, \{b,d\}\not\in E(G)$ (that is, that $\{a,b,c,d\}$ induces either a $P_4$, a $C_4$, or a $2K_2$). Let us then consider maximal cliques $A\supseteq \{a,b\}$ and $B\supseteq \{c,d\}$ of $G$. Since $\{a,b\}, \{c,d\}\in E(G)$ such maximal cliques do exist.

Our assumptions that $\{a,c\}, \{b,d\}\not\in E(G)$ imply that $\{a,b\}\subseteq A\setminus B$ and $\{c,d\}\subseteq B\setminus A$.

Then we can apply Claim \ref{cl-3} and obtain that both $(A\cup B)\setminus\{a,c\}$ and $(A\cup B)\setminus\{b,d\}$ are independent sets of the hypergraph $\mathcal{C}(G)$. Since $(A\cap B)\cup\{a,c\}\subseteq (A\cup B)\setminus\{b,d\}$, it also follows that $(A\cap B)\cup\{a,c\}$ is an independent set of $\mathcal{C}(G)$. Consequently, the equality
\[
\chi^A+\chi^B~=~ \chi^{(A\cup B)\setminus\{a,c\}} + \chi^{(A\cap B)\cup\{a,c\}}
\]
contradicts the $2$-asummability of $\mathcal{C}(G)$. This contradiction proves that such a set of four vertices cannot exists, from which the claim follows by Theorem \ref{thm:char1}.
\end{proof}

The last claim proves the implication $(4)\Rightarrow (1)$, completing the proof of the theorem.
\end{proof}

\begin{remark}
The \emph{independent set hypergraph} of a graph $G$ is the hypergraph with vertex set $V(G)$ and in which the hyperedges are exactly the maximal independent sets of $G$. Since the class of threshold graphs is closed under taking complements, one could obtain three further characterizations of threshold graphs by replacing the clique hypergraph with the independent set hypergraph in any of the properties $(2)$--$(4)$ in Theorem~\ref{thm:threshold}.
\end{remark}

Let us finally show that the property of $1$-Spernerness is related to the family of threshold graphs also in the following way.
Given a hypergraph $\mathcal{H}$, its \emph{co-occurence graph} is the graph with vertex set $V(\mathcal{H})$ in which two vertices are adjacent if and only if there exists a hyperedge of $\mathcal{H}$ containing both (see, e.g., \cite[Chapter 10]{MR2742439}).

\begin{theorem}\label{thm:threshold-graphs-new}
A graph $G$ is threshold if and only if it is the co-occurence graph of some $1$-Sperner hypergraph.
\end{theorem}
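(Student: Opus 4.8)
The plan is to prove both directions by relating the co-occurrence graph of a hypergraph to the clique structure of that graph, and then invoking the already-established characterization of threshold graphs via clique hypergraphs (\Cref{thm:threshold}).

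For the easier direction, suppose $G$ is threshold. By \Cref{thm:threshold}, the clique hypergraph $\mathcal{C}(G)$ is $1$-Sperner. The natural candidate witness is $\mathcal{H} = \mathcal{C}(G)$ itself: I would check that the co-occurrence graph of $\mathcal{C}(G)$ is exactly $G$. Two vertices $u,v$ lie in a common hyperedge of $\mathcal{C}(G)$ (a common maximal clique) if and only if $\{u,v\}$ is a clique, i.e.~$uv\in E(G)$; isolated vertices of $G$ still appear as singleton maximal cliques, so every vertex is covered. Hence $G$ is the co-occurrence graph of the $1$-Sperner hypergraph $\mathcal{C}(G)$, giving this implication directly.

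For the converse, suppose $G$ is the co-occurrence graph of some $1$-Sperner hypergraph $\mathcal{H}=(V,E)$ (so $V=V(G)$). The key observation is that the hyperedges of $\mathcal{H}$ are cliques of $G$: any two vertices inside a hyperedge $e$ co-occur, hence are adjacent in $G$. Thus each $e\in E$ is contained in some maximal clique of $G$, and conversely every edge of $G$ is witnessed by some hyperedge. To conclude $G$ is threshold via \Cref{thm:threshold}, I plan to show that $\mathcal{C}(G)$ is also $1$-Sperner, or more efficiently that $G$ is $\{P_4,C_4,2K_2\}$-free (\Cref{thm:char1}). The cleanest route is the latter: assume for contradiction that $G$ contains four vertices $a,b,c,d$ with $ab,cd\in E(G)$ and $ac,bd\notin E(G)$ (inducing a $P_4$, $C_4$, or $2K_2$). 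Since $ab\in E(G)$, there is a hyperedge $A\in E$ with $\{a,b\}\subseteq A$, and similarly a hyperedge $B$ with $\{c,d\}\subseteq B$. Because $ac,bd\notin E(G)$, no single hyperedge contains $a$ and $c$, nor $b$ and $d$; in particular $c,d\notin A$ and $a,b\notin B$, so $\{a,b\}\subseteq A\setminus B$ and $\{c,d\}\subseteq B\setminus A$. This forces $\min\{|A\setminus B|,|B\setminus A|\}\ge 2$, contradicting the $1$-Sperner property of $\mathcal{H}$.

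The main obstacle I anticipate is bookkeeping around non-Sperner and degenerate hyperedges: a $1$-Sperner hypergraph is automatically Sperner (by definition $\min\{|e\setminus f|,|f\setminus e|\}=1\ge 1$ for distinct $e,f$), so I need not worry about repeated or nested hyperedges, but I should be careful that the argument does not secretly assume each hyperedge is a \emph{maximal} clique of $G$ — the forbidden-subgraph argument above avoids this since it only uses that $\{a,b\}$ and $\{c,d\}$ lie in \emph{some} hyperedges and that the relevant non-adjacencies block co-occurrence. I would therefore favour the \Cref{thm:char1} route over trying to compare $\mathcal{H}$ directly with $\mathcal{C}(G)$, as it sidesteps the gap between arbitrary $1$-Sperner hyperedges and maximal cliques.
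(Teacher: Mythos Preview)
Your proof is correct, and the forward direction is identical to the paper's. For the converse, however, the paper takes a different route: it proceeds by induction on $|V(G)|$, invoking the decomposition theorem for $1$-Sperner hypergraphs (\Cref{thm:decomposition}) to write $\mathcal{H}$ as a gluing $\mathcal{H}_1\odot\mathcal{H}_2$, applies the inductive hypothesis to the co-occurrence graphs of the pieces, and then observes that $G$ is obtained from the threshold graph $G_2$ (the co-occurrence graph of $\mathcal{H}_2$) by adding an isolated vertex $z$ and then the vertices of $V_1$ as universal vertices, concluding via \Cref{cor:threshold}. Your argument is more direct and elementary: you bypass the decomposition entirely and work straight from the forbidden induced subgraph characterization (\Cref{thm:char1}), showing that any four vertices witnessing a $P_4$, $C_4$, or $2K_2$ in $G$ would force two hyperedges $A,B$ with $\min\{|A\setminus B|,|B\setminus A|\}\ge 2$. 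This is shorter and does not rely on \Cref{thm:decomposition} at all; the paper's approach, while longer, serves to illustrate the decomposition theorem in action, which is one of the paper's recurring themes.
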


\begin{proof}
If $G$ is threshold, then by \Cref{thm:threshold} the clique hypergraph $\mathcal{C}(G)$ is $1$-Sperner. Since two vertices of $G$ are adjacent if and only if they belong to some maximal clique, the co-occurrence graph of $\mathcal{C}(G)$ is $G$ itself. This shows the forward implication.

Suppose now that $\mathcal{H} = (V,E)$ is a $1$-Sperner hypergraph and let $G$ be its co-occurence graph. We prove that $G$ is threshold using induction on $n = |V(G)|$. For $n \le 1$, the statement is trivial, so let us assume $n>1$. We may also assume that $E\neq \emptyset$, since otherwise $G$ is edgeless and thus threshold.
If $\mathcal{H}$ has an isolated vertex, then by deleting it we get again a $1$-Sperner hypergraph. Furthermore, to the co-occurence graph of the resulting hypergraph we can add back an isolated vertex to obtain $G$.
Since by \Cref{cor:threshold}, this operations preserves thresholdness,
we may assume that $\mathcal{H}$ has no isolated vertices.

Since $\mathcal{H}$ is a $1$-Sperner hypergraph with $V \neq \emptyset$, Theorem~\ref{thm:decomposition} implies the existence of a vertex $z\in I$ such that $\mathcal{H}$ is $z$-decomposable. Let $z\in V$ denote such a vertex and let $\mathcal{H}$ be the corresponding gluing of $\mathcal{H}_1 = (V_1,E_1)$ and $\mathcal{H}_2 = (V_2,E_2)$.
That is, $V(\mathcal{H}) = V_1\cup V_2\cup\{z\}$ and $E = \{\{z\}\cup e\mid e\in E_1\} \cup \{V_1\cup e\mid e\in E_2\}$.
We may assume without loss of generality that $\mathcal{H}_1$ has no isolated vertex. For $i\in \{1,2\}$, let $G_i$ be the co-occurrence graph of $\mathcal{H}_i$. By the induction hypothesis, $G_1$ and $G_2$ are threshold graphs (note that this is the case also if some of the sets $V_1,V_2,E_1,E_2$ is empty).

Suppose first that $E_2 = \emptyset$. In this case, $E_1\neq\emptyset$ and since $\mathcal{H}$ has no isolated vertices, \hbox{$V_2= \emptyset$}.
Therefore, $G$ is the graph obtained from $G_1$ by adding to it the universal vertex $z$. Thus, by \Cref{cor:threshold}, $G$ is threshold.

Assume finally that $E_2 \neq \emptyset$.
Since $\mathcal{H}$ has no isolated vertices, $E_1\neq \emptyset$.
It is now not difficult to see that we can obtain $G$ in two steps:
first, we add to $G_2$ vertex $z$ as an isolated vertex; second, to the so obtained graph we add the vertices of $V_1$ (one by one, in any order) as universal vertices. Since $G_2$ is threshold, it follows from \Cref{cor:threshold} that $G$ is threshold. This completes the proof.
\end{proof}

\subsection{Domishold graphs}

Similar characterizations as for threshold graphs can be derived for domishold graphs.
Note that in the proof of~\Cref{thm:threshold-graphs-2} we used the fact that
the vertex cover hypergraph is transversal to the family of edges of
the graph. In the case of domishold graphs, we first consider
the hypergraph transversal to the family of (inclusion-minimal) dominating sets.
Given a graph $G$, the \emph{closed neighborhood hypergraph} $\mathcal{N}(G)$ is the hypergraph with vertex set $V(G)$ in which a set $S\subseteq V(G)$ is a hyperedge if and only if $S$ is an inclusion-minimal set of the form $N[v]$ for some $v\in V(G)$.

\begin{theorem}\label{thm:domishold}
For a graph $G$, the following statements are equivalent:
\begin{enumerate}
  \item[(1)] $G$ is domishold.
  \item[(2)] The closed neighborhood hypergraph $\mathcal{N}(G)$ is $1$-Sperner.
  \item[(3)] The closed neighborhood hypergraph $\mathcal{N}(G)$ is threshold.
  \item[(4)] The closed neighborhood hypergraph $\mathcal{N}(G)$ is $2$-asummable.
\end{enumerate}
\end{theorem}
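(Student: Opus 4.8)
The plan is to mirror the proof of \Cref{thm:threshold-graphs-2}, exploiting that the closed neighborhood hypergraph $\mathcal{N}(G)$ is Sperner and transversal to the hypergraph $\mathcal{D}(G)$ whose hyperedges are the inclusion-minimal dominating sets of $G$. Indeed, a set of vertices is a dominating set of $G$ if and only if it meets $N[v]$ for every $v\in V(G)$, that is, if and only if it is a transversal of $\mathcal{N}(G)$; hence the inclusion-minimal dominating sets are exactly the inclusion-minimal transversals of $\mathcal{N}(G)$, so $\mathcal{N}(G)^T = \mathcal{D}(G)$ and, by \Cref{obs:transversals}, also $\mathcal{D}(G)^T = \mathcal{N}(G)$. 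The key dictionary between the two hypergraphs is the following complementation rule, immediate from the definitions: a set $X\subseteq V(G)$ is independent in $\mathcal{N}(G)$ if and only if $V(G)\setminus X$ is a dominating set of $G$ (equivalently, $X$ is dependent if and only if $N[w]\subseteq X$ for some vertex $w$). I would use this rule throughout to translate statements about independent and dependent sets of $\mathcal{N}(G)$ into statements about dominating and non-dominating sets of $G$.

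The implications $(2)\Rightarrow(3)\Rightarrow(4)$ are immediate from \Cref{thm:1-Sperner-threshold} and \Cref{obs:2-asummable-threshold}. For the equivalence $(1)\Leftrightarrow(3)$ I would first note that, by definition, $G$ is domishold if and only if the hypergraph $\mathcal{D}(G)$ is threshold, since a weight function and a threshold separate the dominating sets from the non-dominating ones precisely when they witness that $X$ contains some inclusion-minimal dominating set if and only if $X$ exceeds the threshold. Since $\mathcal{D}(G)$ is Sperner with $\mathcal{D}(G)^T = \mathcal{N}(G)$, \Cref{cor:transversal-threshold} then gives that $\mathcal{D}(G)$ is threshold if and only if $\mathcal{N}(G)$ is threshold, which is exactly $(1)\Leftrightarrow(3)$. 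It therefore remains to close the cycle among $(2),(3),(4)$, for which it suffices to prove $(4)\Rightarrow(2)$, i.e.\ that $2$-asummability of $\mathcal{N}(G)$ forces it to be $1$-Sperner.

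The heart of the argument is this last implication, which I would prove by contraposition: assuming $\mathcal{N}(G)$ is not $1$-Sperner, I will produce a violation of $2$-asummability. Being Sperner but not $1$-Sperner, $\mathcal{N}(G)$ has two inclusion-minimal closed neighborhoods $A=N[u]$ and $B=N[v]$ with $|A\setminus B|\ge 2$ and $|B\setminus A|\ge 2$. Taking the dependent sets to be $A$ and $B$ themselves and performing a single-element swap, for $a\in A\setminus B$ and $b\in B\setminus A$ I set $I_1 = (A\setminus\{a\})\cup\{b\}$ and $I_2 = (B\setminus\{b\})\cup\{a\}$, so that $\chi^A+\chi^B = \chi^{I_1}+\chi^{I_2}$ automatically. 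What must be checked is that $a$ and $b$ can be chosen so that $I_1$ and $I_2$ are both independent in $\mathcal{N}(G)$; by the complementation rule this means that $V(G)\setminus I_1$ and $V(G)\setminus I_2$ are dominating sets of $G$. Here the minimality of $A$ and $B$ does the main work: the only vertices not dominated by $V(G)\setminus A$ are those $w$ with $N[w]\subseteq A$, and minimality forces $N[w]=A$ for each such $w$, so every one of them lies in $N[u]$ and is dominated once $a$ is added (and symmetrically for $B$). Thus $I_1$ can fail to be independent only through a residual vertex $x$ with $b\in N[x]$ and $N[x]\subseteq (A\setminus\{a\})\cup\{b\}$, and similarly for $I_2$.

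The main obstacle is exactly this residual point: because domination is a global property of $G$, excluding such obstructing vertices for a suitable choice of $a\in A\setminus B$ and $b\in B\setminus A$ is not automatic from minimality and the size bounds alone. I expect to resolve it by invoking the forbidden induced subgraph characterization of domishold graphs (\Cref{thm:char3}): one argues that whenever no admissible swap exists, the vertices $u,v$, suitable elements of $A\setminus B$ and $B\setminus A$, and the obstructing vertices together induce one of $P_4$, $2K_2$, $K_{3,3}$, $K_{3,3}^+$, or $\overline{2P_3}$, so that $G$ is not domishold and the $2$-asummability violation can instead be read off directly from that configuration, taking genuine closed neighborhoods as the dependent sets so that neighbors outside the subgraph are absorbed, and using the complementation rule to certify that the complementary sets are dominating. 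Organizing the case analysis around \Cref{thm:char3}, rather than around the neighborhoods $A$ and $B$, is what keeps these global domination conditions under control, just as the maximality of cliques controlled the analogous conditions in the proof of \Cref{thm:threshold}.
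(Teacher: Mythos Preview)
Your proposal is correct in outline but organizes the cycle differently from the paper, and the reorganization does not end up saving work. The paper runs the cycle $(1)\Rightarrow(2)\Rightarrow(3)\Rightarrow(4)\Rightarrow(1)$: for $(1)\Rightarrow(2)$ it argues directly that if $\mathcal{N}(G)$ is not $1$-Sperner then the two offending closed neighborhoods $N[u],N[v]$, together with a few of their private neighbors, already induce one of the graphs from \Cref{thm:char3} (splitting on whether $uv\in E(G)$), so $G$ is not domishold; for $(4)\Rightarrow(1)$ it starts from a forbidden induced subgraph supplied by \Cref{thm:char3} and builds the $2$-asummability violation of $\mathcal{N}(G)$ explicitly, with a case split according to whether the subgraph is $P_4$/$2K_2$ or of $K_{3,3}$ type. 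Your duality argument for $(1)\Leftrightarrow(3)$ via $\mathcal{D}(G)^T=\mathcal{N}(G)$ and \Cref{cor:transversal-threshold} is a genuine shortcut the paper does not use here (it reserves exactly this trick for \Cref{thm:domishold-2}). However, your single implication $(4)\Rightarrow(2)$ ends up requiring both of the paper's hard steps anyway: the ``try a swap first'' manoeuvre is a detour, since when it fails you must still locate a forbidden subgraph and then read off the violation from it --- which is precisely the paper's $(1)\Rightarrow(2)$ and $(4)\Rightarrow(1)$ concatenated. In particular, the forbidden subgraph can be extracted directly from $u,v$ and their private neighbors, with no reference to any obstructing vertex $x$, so the swap attempt buys nothing. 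The paper's split into two separate contrapositives keeps the two uses of \Cref{thm:char3} (once as a necessary condition for domisholdness, once as a sufficient one) cleanly apart and avoids the redundant first pass.
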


\begin{proof}
For the implication $(1)\Rightarrow (2)$, suppose that the closed neighborhood hypergraph $\mathcal{N}(G)$ is not $1$-Sperner.
Since $\mathcal{N}(G)$ is Sperner but not $1$-Sperner,
there exist two vertices $u,v\in V(G)$ such that
\hbox{$\min\{|N[u]\setminus N[v]|,|N[v]\setminus N[u]|\}\ge 2$}.
Clearly, $u\neq v$. We consider two cases depending on whether $u$ and $v$ are adjacent or not.

Suppose first that $uv\not\in E(G)$. Then there exist vertices $u',v'\in V(G)$ such that $u'\in N(u)\setminus N[v]$ and
$v'\in N(v)\setminus N[u]$. Then, $\{u,v,u',v'\}$ must induce either a $P_4$ (if $u'$ and $v'$ are adjacent) or a $2K_2$ (otherwise).

Suppose now that $uv\in E(G)$. Then there exist four distinct vertices
$u',u'',v',v''$ in $V(G)\setminus \{u,v\}$ such that
$u',u''\in N(v)\setminus N[u]$
and
$v',v''\in N(u)\setminus N[v]$.
If $G$ does contains an induced $P_4$, then $G$ is not domishold by \Cref{thm:char3}. We may therefore assume that $G$ does not contain an induced $P_4$. This implies that every vertex of $\{u,u',u''\}$ is adjacent to every vertex of $\{v,v',v''\}$.
Since $u$ is adjacent to neither $u'$ nor $u''$ and
$v$ is adjacent to neither $v'$ nor $v''$, the subgraph of $G$ induced by
these six vertices is isomorphic to one of
$K_{3,3}$, $K_{3,3}^+$, $\overline{2P_3}$ (see Fig.~\ref{fig:domishold}).

In both cases, we conclude that $G$ is not domishold by \Cref{thm:char3}.


The chain of implications $(2)\Rightarrow (3)\Rightarrow (4)$
follows from Theorem~\ref{thm:1-Sperner-threshold}
and \Cref{obs:2-asummable-threshold}.

\medskip
To prove the implication $(4)\Rightarrow (1)$, we prove the contrapositive, that is, if $G$ is not domishold, then $\mathcal{N}(G)$ is not a $2$-asummable hypergraph. Suppose that $G$ is not domishold. By Theorem~\ref{thm:char3} $G$ has an induced subgraph $F$ isomorphic to a graph from the set $\{P_4,2K_2,K_{3,3},K_{3,3}^+,\overline{2P_3}\}$ (see Fig.~\ref{fig:domishold}).

Suppose first that $F$ is isomorphic to $P_4$ or to $2K_2$. Then there exist four distinct vertices $\{a,b,c,d\}\subseteq V(G)$ such that
$\{a,b\}, \{c,d\}\in E(G)$ and $\{a,c\}, \{a,d\}, \{b,c\}\not\in E(G)$.
Consider the sets $B_1 = N[a]$, $B_2 = N[b]$,
$A_1 = (B_1\setminus \{a\})\cup \{c\}$, and
$A_2 = (B_2\setminus \{c\})\cup \{b\}$.
It is clear that $B_1$ and $B_2$ are dependent sets of $\mathcal{N}(G)$.
The sets $A_1$ and $A_2$, however, are independent. By symmetry, it suffices to show that $A_1$ is independent in $\mathcal{N}(G)$, that is, that it contains no closed neighborhood of $G$. We have $N[c]\nsubseteq A_1$ since
$d\in N[c]\setminus A_1$. Furthermore, for every $u\in  A_1\setminus \{c\}$ we have $N[u]\nsubseteq A_1$ since $a\in N[u]\setminus A_1$. Finally, for every $u\in V(G)\setminus A_1$ we have $N[u]\nsubseteq A_1$ since $u\in N[u]\nsubseteq A_1$. Therefore, $A_1$ is independent.
But now, since  we have $\chi^{A_1}+\chi^{A_2} =  \chi^{B_1}  + \chi^{B_2}$,
$G$ is not $2$-asummable, which is what we wanted to prove.

Suppose now that $F$ is isomorphic to $K_{3,3}$, $K_{3,3}^+$, or $\overline{2P_3}$. We may assume that $G$ is $P_4$-free, since otherwise can repeat the arguments above. The fact that $F$ is an induced subgraph of $G$ implies the existence of a set of six distinct vertices $\{a,b,c,d,e,f\}\subseteq V(G)$ such that every vertex from $\{a,b,c\}$ is adjacent to every vertex from $\{d,e,f\}$, vertex $b$ is not adjacent to $a$ or $c$, and vertex
$e$ is not adjacent to $d$ and $f$, see Fig.~\ref{fig:domishold-proof}.

\begin{figure}[h!]
  \centering
   \includegraphics[width=28mm]{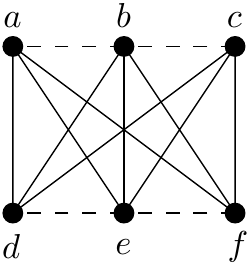}
\caption{Six vertices forming a forbidden induced subgraph for the class of domishold graphs.}
\label{fig:domishold-proof}
\end{figure}

Consider the sets $B_1 = N[b]$, $B_2 = N[e]$,
$A_1 = (B_1\setminus \{f\})\cup \{a\}$, and
$A_2 = (B_2\setminus \{a\})\cup \{f\}$.
It is clear that $B_1$ and $B_2$ are dependent sets of $\mathcal{N}(G)$.
Since $\chi^{A_1}+\chi^{A_2} =  \chi^{B_1}  + \chi^{B_2}$,
we can infer that $G$ is not $2$-asummable (which is what we want to prove).
if the sets $A_1$ and $A_2$ are independent.
So we may assume that $A_1$, say, is not independent in $\mathcal{N}(G)$.
This means that there exists a vertex $u\in V(G)$ such that
$N[u]\subseteq A_1$. Then $u\neq a$ since $f\in N[a]\setminus A_1$,
and similarly $u\neq b$.
Since $u\in A_1$ and every vertex in $A_1$ is either $a$ or belongs to $N[b]$, we infer that $u$ is adjacent to $b$. However, since $N[u]\subseteq A_1$, vertices $c$ and $f$ are not adjacent to $u$. But now, the vertex set
$\{u,b,f,c\}$ induces a $P_4$ in $G$, a contradiction.
\end{proof}

Given a graph $G$, the \emph{dominating set hypergraph} $\mathcal{D}(G)$ is the hypergraph with vertex set $V(G)$ in which a set $D\subseteq V(G)$ is a hyperedge if and only if $D$ is an inclusion-minimal dominating set of $G$.
The analogue of Theorem~\ref{thm:threshold-graphs} for domishold graphs
is the following.

\begin{theorem}\label{thm:domishold-2}
For a graph $G$, the following statements are equivalent:
\begin{enumerate}
  \item[(1)] $G$ is domishold.
  \item[(2)] The dominating set hypergraph $\mathcal{D}(G)$ is threshold.
  \item[(3)] The dominating set hypergraph $\mathcal{D}(G)$ is $2$-asummable.
\end{enumerate}
\end{theorem}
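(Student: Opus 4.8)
The plan is to prove the equivalence of the three statements in Theorem~\ref{thm:domishold-2} by leveraging the previously established Theorem~\ref{thm:domishold}, which already characterizes domishold graphs via the closed neighborhood hypergraph $\mathcal{N}(G)$. The crucial observation is that $\mathcal{N}(G)$ and $\mathcal{D}(G)$ stand in a transversal relationship: the inclusion-minimal dominating sets of $G$ are precisely the inclusion-minimal transversals of the closed neighborhood hypergraph. Indeed, a set $D \subseteq V(G)$ is a dominating set if and only if $D$ intersects $N[v]$ for every $v \in V(G)$, which is exactly the condition for $D$ to be a transversal of the hypergraph $\{N[v] : v \in V(G)\}$. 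Since both $\mathcal{N}(G)$ and $\mathcal{D}(G)$ are Sperner (the former by taking only inclusion-minimal closed neighborhoods, the latter by taking inclusion-minimal dominating sets), I expect that $\mathcal{D}(G) = (\mathcal{N}(G))^T$, and by \Cref{obs:transversals} also $\mathcal{N}(G) = (\mathcal{D}(G))^T$.

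\medskip
\noindent First I would establish this transversal duality carefully. The subtle point is that $\mathcal{N}(G)$ takes the \emph{inclusion-minimal} members of the family $\{N[v]\}$, whereas transversality is defined against the full family of closed neighborhoods; but discarding a non-minimal closed neighborhood $N[v]$ (one that contains some $N[u]$) does not change the set of transversals, so the transversals of $\mathcal{N}(G)$ coincide with the transversals of the full closed neighborhood family, which are exactly the dominating sets. Taking inclusion-minimal transversals on both sides then yields $(\mathcal{N}(G))^T = \mathcal{D}(G)$ as Sperner hypergraphs.

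\medskip
\noindent With the duality in hand, the equivalences follow from the transversal-invariance results already available. For $(1) \Leftrightarrow (2)$: by \Cref{thm:domishold}, $G$ is domishold if and only if $\mathcal{N}(G)$ is threshold; by \Cref{cor:transversal-threshold} applied to the Sperner hypergraph $\mathcal{N}(G)$, this holds if and only if its transversal $\mathcal{D}(G) = (\mathcal{N}(G))^T$ is threshold. For $(1) \Leftrightarrow (3)$: again by \Cref{thm:domishold}, $G$ is domishold if and only if $\mathcal{N}(G)$ is $2$-asummable; by \Cref{obs:transversal-k-asummable} with $k = 2$ applied to $\mathcal{N}(G)$, this is equivalent to $\mathcal{D}(G)$ being $2$-asummable. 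The chain $(2) \Rightarrow (3)$ also follows directly from \Cref{obs:2-asummable-threshold}, giving a fully consistent cycle of implications.

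\medskip
\noindent The main obstacle I anticipate is not in the logical structure, which is essentially a bookkeeping of transversal invariance, but in verifying the duality $\mathcal{D}(G) = (\mathcal{N}(G))^T$ with full rigor, particularly handling the passage between the full closed neighborhood family and its Sperner reduction $\mathcal{N}(G)$, and confirming that inclusion-minimal dominating sets correspond exactly to inclusion-minimal transversals. Note also that, unlike Theorem~\ref{thm:domishold}, the $1$-Sperner property does not appear in the list: the transversal of a $1$-Sperner hypergraph need not be $1$-Sperner, so one cannot include ``$\mathcal{D}(G)$ is $1$-Sperner'' as an equivalent condition, which is why this theorem is stated as the analogue of \Cref{thm:threshold-graphs} rather than of \Cref{thm:domishold}.
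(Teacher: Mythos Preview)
Your proposal is correct and follows essentially the same route as the paper's proof: establish that $\mathcal{D}(G)$ is the transversal hypergraph of $\mathcal{N}(G)$ (hence $(\mathcal{D}(G))^T = \mathcal{N}(G)$ by \Cref{obs:transversals}), then invoke \Cref{cor:transversal-threshold} and \Cref{obs:transversal-k-asummable} to transfer thresholdness and $2$-asummability between $\mathcal{D}(G)$ and $\mathcal{N}(G)$, and finish via Theorem~\ref{thm:domishold}. Your additional care in justifying why passing to the Sperner reduction $\mathcal{N}(G)$ does not alter the set of transversals is a detail the paper leaves implicit.
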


\begin{sloppypar}
\begin{proof}
Since the dominating set hypergraph $\mathcal{D}(G)$ is Sperner and it is the transversal hypergraph of the closed neighborhood hypergraph $\mathcal{N}(G)$, \Cref{obs:transversals} implies that $(\mathcal{D}(G))^T = \mathcal{N}(G)$. Therefore, by~\Cref{cor:transversal-threshold} and~\Cref{obs:transversal-k-asummable}, the dominating set hypergraph $\mathcal{D}(G)$ is threshold, resp.~$2$-asummable, if and only if the closed neighborhood hypergraph $\mathcal{N}(G)$ is threshold, resp.~$2$-asummable.
The theorem now follows from Theorem~\ref{thm:domishold}.
\end{proof}
\end{sloppypar}

To see that the list from Theorem~\ref{thm:domishold-2} cannot be extended with the property of
the dominating set hypergraph $\mathcal{D}(G)$ being $1$-Sperner, consider, for example, the graph
$C_4$. This is a domishold graph but its dominating set hypergraph is isomorphic to
$K_4$, which is not $1$-Sperner.

\subsection{Total domishold and connected-domishold graphs}

We conclude this section by stating two related theorems due to~\citet{MR3281177,CM-ISAIM2014} about the structure of graphs defined similarly as domishold graphs, but with respect to total, resp.~connected domination. A graph $G = (V,E)$ is said to be \emph{total domishold} (resp.,
\emph{connected-domishold}) if there exist a non-negative integer weight function $w:V\to \mathbb{Z}_{\ge 0}$ and a non-negative integer threshold $t\in \mathbb{Z}_{\ge 0}$ such that for every subset $X\subseteq V$, we have
$w(X)\ge t$ if and only if $X$ is a total dominating set (resp., a connected dominating set). Given a graph $G$, the \emph{neighborhood hypergraph} is the hypergraph with vertex set $V(G)$ in which a set $S\subseteq V(G)$ is a hyperedge if and only if $S$ is an inclusion-minimal set of the form $N(v)$ for some $v\in V(G)$. A \emph{cutset} in a graph $G$ is a set $S\subseteq V(G)$ such that $G-S$ is disconnected. The \emph{cutset hypergraph} is the hypergraph with vertex set $V(G)$ in which a set $S\subseteq V(G)$ is a hyperedge if and only if $S$ is an inclusion-minimal cutset in $G$.

\begin{theorem}[\citet{MR3281177}]\label{thm:total-domishold}
For a graph $G$, the following statements are equivalent:
\begin{enumerate}
  \item[(1)] Every induced subgraph of $G$ is total domishold.
  \item[(2)] The neighborhood hypergraph of every induced subgraph of $G$ is $1$-Sperner.
  \item[(3)] The neighborhood hypergraph of every induced subgraph of $G$ is threshold.
  \item[(4)] The neighborhood hypergraph of every induced subgraph of $G$ is $2$-asummable.
\end{enumerate}
\end{theorem}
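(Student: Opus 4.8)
The plan is to establish the four statements through the cycle $(2)\Rightarrow(3)\Rightarrow(4)\Rightarrow(2)$ together with the separate equivalence $(1)\Leftrightarrow(3)$; combined, these give all four equivalences. The implications $(2)\Rightarrow(3)$ and $(3)\Rightarrow(4)$ are immediate: applied to a fixed induced subgraph $H$, they assert that a $1$-Sperner hypergraph is threshold (Theorem~\ref{thm:1-Sperner-threshold}) and that a threshold hypergraph is $2$-asummable (\Cref{obs:2-asummable-threshold}). Since these facts hold for the neighborhood hypergraph of \emph{every} induced subgraph of $G$, the stated versions follow at once.

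For $(1)\Leftrightarrow(3)$ I would argue at the level of a single graph $H$ and then quantify over induced subgraphs. Write $\mathcal{N}_{o}(H)$ for the (open) neighborhood hypergraph of $H$ from the statement. The key observation is that a vertex set $X$ is a total dominating set of $H$ exactly when $X$ meets $N(v)$ for every $v\in V(H)$, that is, exactly when $X$ is a transversal of $\mathcal{N}_{o}(H)$; hence the hypergraph $\mathcal{T}(H)$ of inclusion-minimal total dominating sets equals $(\mathcal{N}_{o}(H))^{T}$. Both hypergraphs are Sperner, so by \Cref{obs:transversals} they are mutually transversal. Since the dependent sets of $\mathcal{T}(H)$ are precisely the total dominating sets of $H$, the graph $H$ is total domishold if and only if $\mathcal{T}(H)$ is threshold, which by \Cref{cor:transversal-threshold} holds if and only if $\mathcal{N}_{o}(H)$ is threshold. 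Applying this to every induced subgraph yields $(1)\Leftrightarrow(3)$. This is exactly the transversal-duality mechanism already used for Theorem~\ref{thm:domishold-2}, now with open neighborhoods and total dominating sets in place of closed neighborhoods and dominating sets.

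The substance of the theorem is the implication $(4)\Rightarrow(2)$, which is where the hereditary quantifier ``every induced subgraph'' is essential, since $2$-asummability does not imply thresholdness (let alone $1$-Spernerness) for a single hypergraph. I would prove the contrapositive. Suppose $(2)$ fails; as an induced subgraph of an induced subgraph of $G$ is again an induced subgraph of $G$, we may pass to a \emph{smallest} induced subgraph $H$ of $G$ whose neighborhood hypergraph is not $1$-Sperner. Because $\mathcal{N}_{o}(H)$ is Sperner but not $1$-Sperner, there are vertices $u,v$ with $N(u),N(v)$ inclusion-minimal and $\min\{|N(u)\setminus N(v)|,|N(v)\setminus N(u)|\}\ge 2$; fix $u_{1}\in N(u)\setminus N(v)$ and $v_{1}\in N(v)\setminus N(u)$. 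Set $B_{1}=N(u)$, $B_{2}=N(v)$, $A_{1}=(N(u)\setminus\{u_{1}\})\cup\{v_{1}\}$, and $A_{2}=(N(v)\setminus\{v_{1}\})\cup\{u_{1}\}$, so that $\chi^{A_{1}}+\chi^{A_{2}}=\chi^{B_{1}}+\chi^{B_{2}}$. Here $B_{1},B_{2}$ are dependent in $\mathcal{N}_{o}(H)$ (each is a minimal neighborhood, hence a hyperedge), so in order to contradict the $2$-asummability guaranteed by $(4)$ it suffices to show that $A_{1}$ and $A_{2}$ are independent, i.e.\ contain no inclusion-minimal neighborhood of $H$.

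The main obstacle is precisely this last step. The difficulty is that deleting a vertex alters \emph{every} neighborhood simultaneously, so one cannot naively prune the vertices responsible for $A_{1}$ or $A_{2}$ being dependent. The plan is to exploit the minimality of $H$: if some $x\in V(H)$ satisfied $N(x)\subseteq A_{1}$ (so $x$ is non-adjacent to $u_{1}$ yet has all its neighbors inside $(N(u)\setminus\{u_{1}\})\cup\{v_{1}\}$), I would show that deleting a suitably chosen vertex yields a smaller induced subgraph whose neighborhood hypergraph is still not $1$-Sperner---arguing that the pair $(u,v)$, or a slightly modified pair, continues to witness non-$1$-Spernerness after the deletion---contradicting the choice of $H$. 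Making this reduction precise (selecting the vertex to delete so that both side-differences $N(u)\setminus N(v)$ and $N(v)\setminus N(u)$ stay of size at least two while the offending inner neighborhood is destroyed) is the technical heart of the argument and the step I expect to require the most care; the hypothesis $\min\{|N(u)\setminus N(v)|,|N(v)\setminus N(u)|\}\ge 2$ is exactly what provides the room needed for such a deletion. Once independence of $A_{1},A_{2}$ is established, the identity $\chi^{A_{1}}+\chi^{A_{2}}=\chi^{B_{1}}+\chi^{B_{2}}$ contradicts the $2$-asummability of $\mathcal{N}_{o}(H)$, completing $(4)\Rightarrow(2)$ and closing the cycle.
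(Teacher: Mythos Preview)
First, note that the present paper does not prove Theorem~\ref{thm:total-domishold}; it is stated as a result of~\cite{MR3281177}, with no proof given here. So there is no in-paper argument to compare yours against directly.

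Regarding the proposal itself: the chain $(2)\Rightarrow(3)\Rightarrow(4)$ and the equivalence $(1)\Leftrightarrow(3)$ via transversal duality are correct and follow exactly the template of Theorems~\ref{thm:domishold} and~\ref{thm:domishold-2}. The substantive gap is in $(4)\Rightarrow(2)$, and you say so yourself: the ``technical heart''---showing that $A_1,A_2$ are independent in $\mathcal{N}_o(H)$, or else deleting a vertex to contradict the minimality of $H$---is a plan, not a proof. This step is genuinely delicate. Deleting a vertex $w$ from $H$ removes $w$ from every open neighborhood and also drops the hyperedge $N(w)$; consequently the family of \emph{inclusion-minimal} neighborhoods can change drastically, and there is no evident guarantee that $N(u)$ and $N(v)$ remain inclusion-minimal hyperedges after the deletion, or that each still has at least two private vertices relative to the other. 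You would also need the same deletion to repair both $A_1$ and $A_2$ at once. In the analogous implication for Theorem~\ref{thm:domishold} in this paper, the step is \emph{not} handled by a minimality reduction but by invoking the explicit forbidden-induced-subgraph list for domishold graphs (Theorem~\ref{thm:char3}) and exhibiting a $2$-summing certificate case by case; the paper remarks that~\cite{MR3281177} supplies an analogous forbidden-subgraph characterization for hereditarily total domishold graphs, and that is the natural route to a complete proof here. As written, your outline for $(4)\Rightarrow(2)$ is not yet a proof.
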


\begin{theorem}[\citet{CM-ISAIM2014}]\label{thm:connected-domishold}
For a graph $G$, the following statements are equivalent:
\begin{enumerate}
  \item[(1)] Every induced subgraph of $G$ is connected-domishold.
  \item[(2)] The cutset hypergraph of every induced subgraph of $G$ is $1$-Sperner.
  \item[(3)] The cutset hypergraph of every induced subgraph of $G$ is threshold.
  \item[(4)] The cutset hypergraph of every induced subgraph of $G$ is $2$-asummable.
\end{enumerate}
\end{theorem}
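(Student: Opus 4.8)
The plan is to route everything through a transversal duality between cutsets and connected dominating sets, mirroring the treatment of $\mathcal{VC}(G)$ versus the edges in \Cref{thm:threshold-graphs-2} and of $\mathcal{N}(G)$ versus $\mathcal{D}(G)$ in \Cref{thm:domishold-2}. First I would introduce the Sperner hypergraph $\mathcal{D}_c(G)$ whose hyperedges are the inclusion-minimal connected dominating sets of $G$, and establish the key lemma that, for every graph $G$, the cutset hypergraph and $\mathcal{D}_c(G)$ are mutually transversal: a vertex set is an inclusion-minimal cutset if and only if it is an inclusion-minimal transversal of $\mathcal{D}_c(G)$. The proof of this lemma is the combinatorial core of the first half. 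One shows that a set $T$ meets every connected dominating set precisely when $G[V\setminus T]$ contains none, and then that the inclusion-minimal such $T$ are exactly the sets whose removal disconnects $G$. The nontrivial direction is that a minimal transversal $T$ must be a cutset: if $G[V\setminus T]$ were connected it would fail to dominate (as it contains no connected dominating set), so some vertex has all its neighbors in $T$, and deleting such a vertex from $T$ keeps it a transversal, contradicting minimality. Degenerate cases (disconnected $G$, complete $G$) are handled by the usual conventions under which the empty set, respectively the whole vertex set, is the unique minimal cutset.

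Second, I would record the definitional equivalence that $G$ is connected-domishold if and only if $\mathcal{D}_c(G)$ is a threshold hypergraph: the family of connected dominating sets is upward closed (any superset of a connected dominating set is again one, since each added vertex already has a neighbor inside), so a weight function separates the connected dominating sets from the rest exactly when it realizes $\mathcal{D}_c(G)$ as a threshold hypergraph. Combining this with the key lemma and the transversal-invariance results \Cref{cor:transversal-threshold} and \Cref{obs:transversal-k-asummable} yields, for each induced subgraph $H$, the per-graph equivalences: $H$ is connected-domishold if and only if $\mathcal{D}_c(H)$ is threshold if and only if the cutset hypergraph of $H$ is threshold; and the cutset hypergraph of $H$ is $2$-asummable if and only if $\mathcal{D}_c(H)$ is $2$-asummable. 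Quantifying over all induced subgraphs gives $(1)\Leftrightarrow(3)$ at once, while $(2)\Rightarrow(3)\Rightarrow(4)$ follows subgraph by subgraph from \Cref{thm:1-Sperner-threshold} and \Cref{obs:2-asummable-threshold}.

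What remains, and what I expect to be the main obstacle, is the implication $(4)\Rightarrow(2)$, which is genuinely hereditary rather than per-graph (recall that $2$-asummability does not imply thresholdness, let alone $1$-Spernerness, for a single hypergraph). I would prove its contrapositive: if some induced subgraph $H$ has a cutset hypergraph that is Sperner but not $1$-Sperner, then there are two inclusion-minimal cutsets $S,S'$ of $H$ with $\min\{|S\setminus S'|,|S'\setminus S|\}\ge 2$, and from this configuration I would exhibit an induced subgraph $H'$ of $H$ whose cutset hypergraph is not $2$-asummable, contradicting $(4)$. Concretely, I would take $B_1=S$ and $B_2=S'$ as the two dependent (cutset) sets and build two independent (non-cutset) sets $A_1,A_2$ by swapping suitable pairs of vertices between $S\setminus S'$ and $S'\setminus S$ so that $\chi^{A_1}+\chi^{A_2}=\chi^{B_1}+\chi^{B_2}$. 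The delicate part is to guarantee, using the minimality of $S$ and $S'$ (each of their vertices has neighbors in at least two components of the corresponding vertex-deleted graph), that the swapped sets $A_1,A_2$ are not cutsets; this typically forces one to restrict to a carefully chosen induced subgraph $H'$ in order to suppress vertices that would otherwise reconnect or re-disconnect the graph. This structural case analysis of how minimal separators overlap is where essentially all the work lies; the remaining implications are formal consequences of the transversal machinery already available in the preliminaries.
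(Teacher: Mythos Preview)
The present paper does not prove this statement: it is quoted from \cite{CM-ISAIM2014} and no argument is supplied here (the surrounding text even notes that the accompanying forbidden induced subgraph characterizations from that source are omitted). There is therefore no in-paper proof to compare your proposal against.

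On the proposal itself: your transversal duality between the cutset hypergraph and the hypergraph $\mathcal{D}_c(G)$ of minimal connected dominating sets is correct, and your derivations of $(1)\Leftrightarrow(3)$ and of $(2)\Rightarrow(3)\Rightarrow(4)$ from it via \Cref{thm:1-Sperner-threshold}, \Cref{obs:2-asummable-threshold}, \Cref{obs:transversal-k-asummable}, and \Cref{cor:transversal-threshold} are sound. The genuine gap is $(4)\Rightarrow(2)$. You rightly identify it as the only substantive step, but your plan---swap vertices between two minimal cutsets $S,S'$ with $\min\{|S\setminus S'|,|S'\setminus S|\}\ge 2$ to build a $2$-summability violation, passing to a smaller induced subgraph ``if necessary''---is a description of what a proof would have to achieve, not an argument. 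You give no mechanism for choosing the swapped vertices, or the subgraph $H'$, so that the resulting sets $A_1,A_2$ are guaranteed not to be cutsets, and for arbitrary pairs of minimal separators this is far from automatic. In the cited source the implication is established through an explicit finite list of forbidden induced subgraphs, with the $2$-asummability failure verified on each obstruction; that concrete route is essentially what your ``structural case analysis of how minimal separators overlap'' would have to reproduce, and it is where all the content of the theorem lies.
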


\begin{sloppypar}
Characterizations of graphs every induced subgraph of which is total domishold, resp.~connected-domishold, in terms of forbidden induced subgraphs were also given in~\cite{MR3281177,CM-ISAIM2014}. We omit them here.
\end{sloppypar}

\section{Bipartite and split representations of $1$-Sperner hypergraphs}\label{sec:to-graphs}

In the remainder of the paper, we apply the decomposition theorem for $1$-Sperner hypergraphs (\Cref{thm:decomposition}) to derive decomposition theorems for four classes of graphs and derive some consequences.
The four classes come in two pairs: a class of split graphs and their complements (which are also split), and a class of bipartite graphs and their complements (which are cobipartite).
For our purpose it will be convenient to consider split graphs as already equipped with a split partition and bipartite graphs (bigraphs) as already equipped with a bipartition. Partly following the terminology of Dabrowski and Paulusma~\cite{MR3442572}, we will say that a \emph{labeled split graph} is a triple $(G,K,I)$ such that $G$ is a split graph with a split partition $(K,I)$. Similarly, a \emph{labeled bigraph} is a triple $(G,A,B)$ such that $G$ is a bigraph with a bipartition $(A,B)$. Given two labeled split graphs $(G_1,K^1,I^1)$ and $(G_2,K^2,I^2)$, we say that $G_1$ \emph{is (isomorphic to) an induced subgraph of} $G_2$ if there exists a pair of injective mappings $f:K^1\to K^2$ and $g:I^1\to I^2$ such that for all $(u,v)\in (K^1,I^1)$, vertices $a$ and $b$ are adjacent
in $G_1$ if and only if vertices $f(u)$ and $g(v)$ are adjacent in $G_2$.
If $G_1$ is not an induced subgraph of $G_2$, we say that
$G_2$ is \emph{$G_1$-free}. For labeled bigraphs, the definitions are similar.

To each hypergraph we can associate three types of \emph{incidence graphs} in a natural way: one labeled bipartite graph and two labeled split graphs.

\begin{definition}
Given a hypergraph $\mathcal{H} = (V,E)$, we define:
\begin{itemize}
  \item the \emph{bigraph} of $\mathcal{H}$ as the labeled bigraph $(G,V,E)$ such that $v\in V$ is adjacent to $e\in E$ if and only if $v\in e$,
  \item the \emph{vertex-clique split graph} of $\mathcal{H}$ as the labeled split graph $(G,V,E)$ such that $V$ is a clique and $v\in V$ is adjacent to $e\in E$ if and only if $v\in e$, and
  \item the \emph{edge-clique split graph} of $\mathcal{H}$ as the labeled split graph $(G,E,V)$ such $E$ is a clique and $v\in V$ is adjacent to $e\in E$ if and only if $v\in e$.
\end{itemize}
\end{definition}

See Fig.~\ref{fig:example2} for an example.

\begin{figure}[h!]
  \centering
   \includegraphics[width=0.97\textwidth]{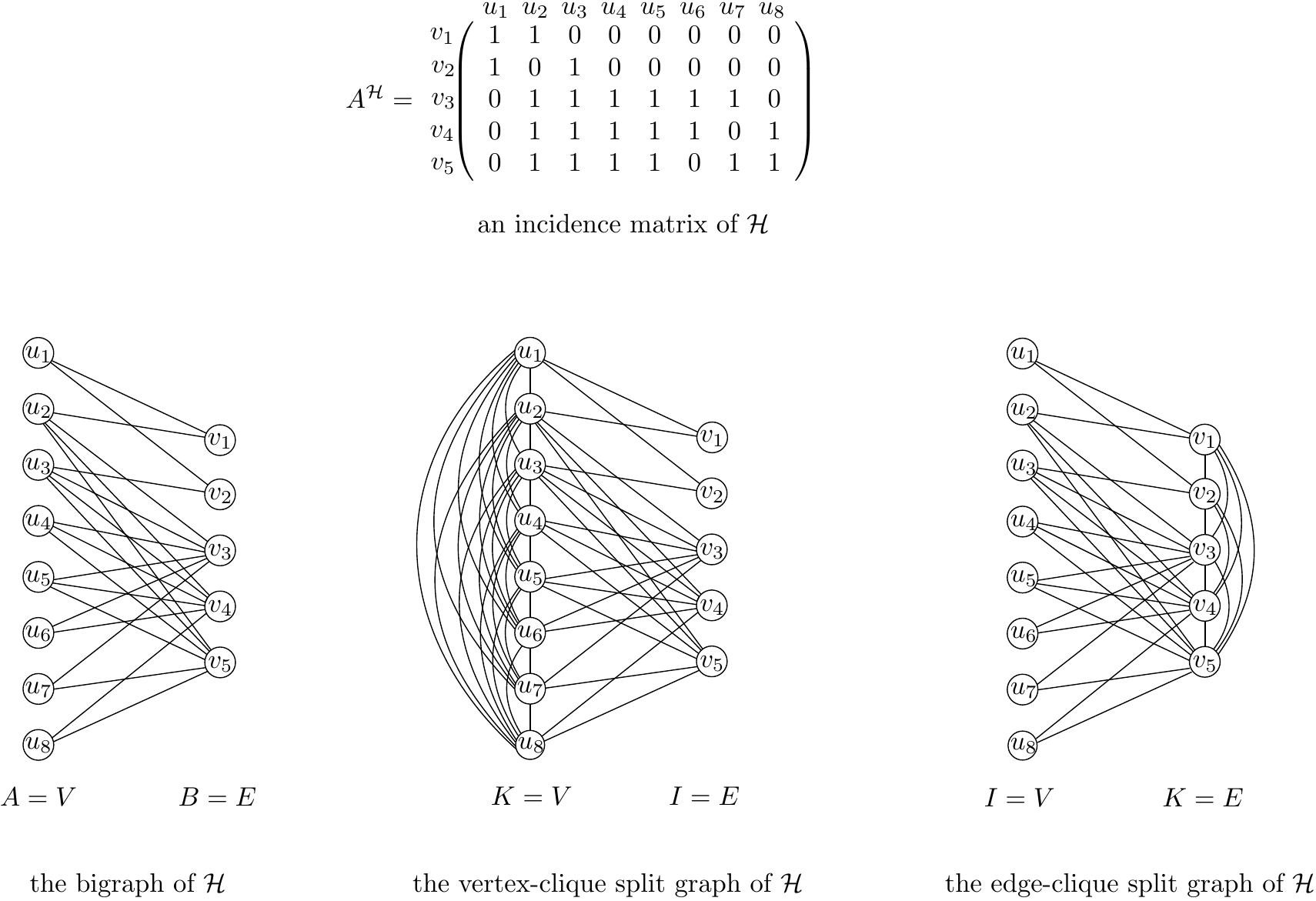}
\caption{An incidence matrix $A^{\mathcal{H}}$ of a hypergraph $\mathcal{H} = (V,E)$ with $V = \{u_1,\ldots, u_8\}$ and $E = \{v_1,\ldots, v_5\}$, and its three incidence graphs.}  \label{fig:example2}
\end{figure}

Representing a hypergraph with its bigraph is standard in studies of general~\cite{MR1013569} or highly regular~\cite{MR0006834,MR0038078,MR2978043} hypergraphs. Representing a hypergraph with a split incidence graph has also been used in the literature, e.g., by~\citet{MR3276423}, followed by~\citet{MR3281177,CM-ISAIM2014}, who considered the vertex-clique split graph (using slightly different terminology).

\medskip
We will characterize $1$-Sperner hypegraphs in terms of properties of their bigraphs and split graphs. Recall that a hypergraph $\mathcal{H}$ is said to be dually Sperner if every two distinct hyperedges $e$ and $f$ satisfy
$\min\{|e\setminus f|,|f\setminus e|\}\le 1$, and that
$\mathcal{H}$ is $1$-Sperner if and only if it is Sperner and dually Sperner. Both the Sperner and the dually Sperner property can be easily translated to properties of their incidence graphs. The corresponding characterizations of $1$-Sperner hypegraphs will then follow as corollaries.

\medskip
The Sperner property is naturally expressed in terms of constraints on the neighborhoods of vertices in one of the parts of the bipartition of a labeled bigraph (resp., split partition of a labeled split graph) intersected with the other part.

\begin{definition}\label{def:Sperner-etc}
A labeled bigraph $(G,A,B)$ is said to be \emph{right-Sperner} if for all $u,v\in B$, if $N(u)\subseteq N(v)$ then $u = v$.
A labeled split graph $(G,K,I)$ is said to be \emph{clique-Sperner} if for all $u,v\in K$, if $N(u)\cap I\subseteq N(v)\cap I$ then $u = v$, and \emph{independent-Sperner} if for all $u,v\in I$, if $N(u)\subseteq N(v)$ then $u = v$.
\end{definition}
\begin{observation}\label{obs:Sperner}
For every hypergraph $\mathcal{H} = (V,E)$, the following conditions are equivalent:
\begin{enumerate}
  \item $\mathcal{H}$ is Sperner.
  \item The bigraph of $\mathcal{H}$ is right-Sperner.
  \item The vertex-clique split graph of $\mathcal{H}$ is \hbox{independent-Sperner}.
  \item The edge-clique split graph of $\mathcal{H}$ is clique-Sperner.
\end{enumerate}
\end{observation}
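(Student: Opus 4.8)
The plan is to prove the equivalence of the four conditions in \Cref{obs:Sperner} by unwinding the definitions and exploiting the symmetry between the Sperner property of $\mathcal{H}$ and the containment structure of neighborhoods in the three incidence graphs. The key observation throughout is that for any vertex $v\in V$ and hyperedge $e\in E$, the incidence ``$v\in e$'' is recorded identically in all three incidence graphs; the only difference among them is which additional edges are added within the parts (a clique on $V$ for the vertex-clique split graph, a clique on $E$ for the edge-clique split graph, and no such edges for the bigraph). Since the Sperner property concerns only \emph{containments among hyperedges}, the relevant neighborhoods are always taken across the bipartition, where the three graphs agree.

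First I would set up the translation dictionary. In the bigraph $(G,V,E)$, for a hyperedge $e\in E$ (a vertex of the $E$-side), its neighborhood $N(e)\subseteq V$ is exactly the set of vertices $v$ with $v\in e$, i.e. $N(e)=e$ viewed as a subset of $V$. Hence for two hyperedges $e,f\in E$, the containment $N(e)\subseteq N(f)$ in the bigraph is literally $e\subseteq f$ as sets. Thus the right-Sperner condition ``$N(e)\subseteq N(f)\Rightarrow e=f$ for all $e,f\in B=E$'' is word-for-word the statement that $\mathcal{H}$ is Sperner. This gives the equivalence $(1)\Leftrightarrow(2)$ immediately.

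Next I would handle $(1)\Leftrightarrow(4)$, the edge-clique split graph $(G,E,V)$, where $E$ is the clique part $K$ and $V$ is the independent part $I$. For a clique-vertex $e\in K=E$, the clique-Sperner condition restricts the neighborhood to $N(e)\cap I$; since $I=V$ and the cross edges again encode membership, $N(e)\cap I = \{v\in V: v\in e\}=e$. So $N(e)\cap I\subseteq N(f)\cap I$ is once more $e\subseteq f$, and clique-Sperner reduces verbatim to $\mathcal{H}$ being Sperner. For $(1)\Leftrightarrow(3)$, the vertex-clique split graph $(G,V,E)$ has clique part $V$ and independent part $I=E$; here independent-Sperner concerns $u,v\in I=E$ (i.e. hyperedges) with $N(u)\subseteq N(v)$. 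A hyperedge-vertex $e$ on the independent side is adjacent only across the bipartition (there are no edges within $I=E$), so $N(e)=\{v\in V: v\in e\}=e$, giving the same reduction to Spernerness. In all three cases the essential point is that the added clique edges lie \emph{within} a part and therefore never appear in the cross-neighborhoods that the Sperner-type conditions inspect.

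The main thing to be careful about — rather than a genuine obstacle — is the bookkeeping of which part plays the role of $A/B$ or $K/I$ in each incidence graph, and verifying that the neighborhood appearing in each Sperner-type definition is taken on the correct side so that it equals the hyperedge as a subset of $V$. Once that matching is pinned down, each equivalence collapses to the tautology ``$N(\cdot)\subseteq N(\cdot)$ across the bipartition is set-containment of hyperedges,'' and \Cref{def:Sperner-etc} was evidently engineered precisely so that this holds. I would therefore present the proof as three short paragraphs, one per nontrivial equivalence, each exhibiting the identity between the cross-neighborhood of a hyperedge-vertex and the hyperedge itself, and concluding that the displayed Sperner-type condition is identical to the defining condition of a Sperner hypergraph.
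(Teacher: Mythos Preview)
Your proposal is correct and is precisely the intended argument: the paper states this result as an observation without proof, and your definition-chasing identification of the cross-neighborhood of a hyperedge-vertex with the hyperedge itself is exactly the verification the paper leaves to the reader.
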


\medskip
To express the dually Sperner property of a hypergraph in terms of properties of its incidence graphs, we introduce three small graphs, namely the $2P_3$, the graph denoted by $H$ and obtained from $2P_3$ by adding the edge between the two vertices of degree two, and by $\overline H$ the
(graph theoretic) complement of $H$. When necessary, we will consider the bigraph $2P_3$ as a labeled bigraph, with respect to the bipartition shown in Fig.~\ref{fig:small-graphs}, and the split graphs $H$ and $\overline{H}$ as labeled split graphs, with respect to their unique split partitions.

\begin{figure}[h!]
  \centering
   \includegraphics[width=90mm]{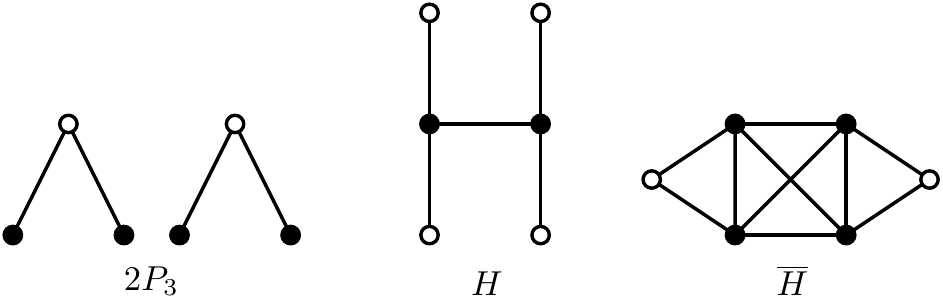}
\caption{The graphs $2P_3$, $H$, and $\overline{H}$. We consider $2P_3$ as a labeled bigraph with bipartition $(A,B)$ containing the four black vertices in $A$ and the two white vertices in $B$. The black, resp.~white vertices of $H$ and of $\overline{H}$ denote sets $K$ and $I$ in their (unique) split partitions.}  \label{fig:small-graphs}
\end{figure}

\begin{remark}
When the bigraph $2P_3$ and the labeled split graphs $H$ and $\overline{H}$ are considered labeled, as shown in Fig.~\ref{fig:FF}, they are the bigraph, the edge-clique split graph, and the vertex-clique split graph, respectively, of the smallest non-$1$-Sperner hypergraph.

\begin{figure}[h!]
  \centering
   \includegraphics[width=85mm]{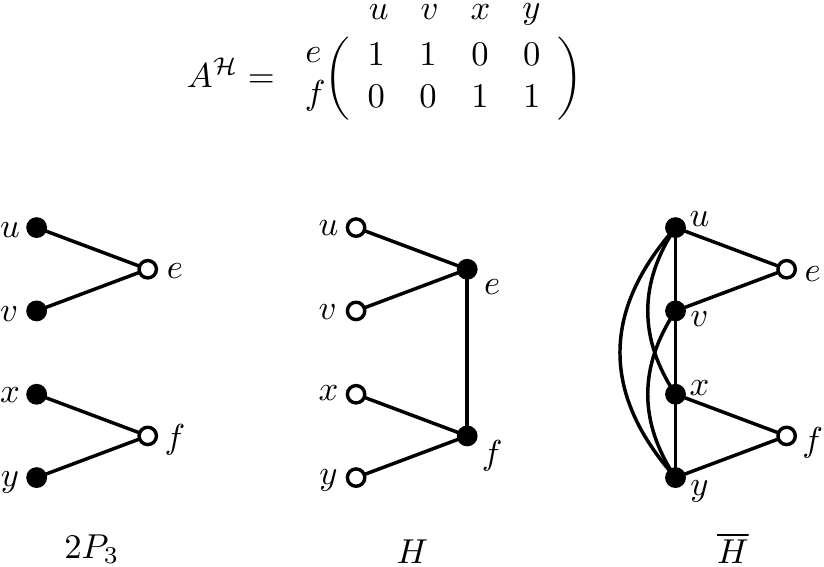}
\caption{An incidence matrix $A^{\mathcal{H}}$ of a non-$1$-Sperner hypergraph $\mathcal{H} = (V,E)$ with $V = \{u,v,x,y\}$ and $E = \{e,f\}$, and its three incidence graphs. They are isomorphic to $2P_3$, $H$, and $\overline{H}$, respectively.}  \label{fig:FF}
\end{figure}
\end{remark}

The dually Sperner property of a hypergraph can be expressed in terms of properties of its incidence graphs as follows.

\begin{observation}\label{obs:dually-Sperner}
For every hypergraph $\mathcal{H} = (V,E)$, the following conditions are equivalent:
\begin{enumerate}
  \item $\mathcal{H}$ is dually Sperner.
  \item The bigraph of $\mathcal{H}$ is $2P_3$-free in the labeled sense.
  \item The vertex-clique split graph of $\mathcal{H}$ is $\overline{H}$-free.
  \item The edge-clique split graph of $\mathcal{H}$ is $H$-free.
\end{enumerate}
\end{observation}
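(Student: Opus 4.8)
The plan is to prove \Cref{obs:dually-Sperner} by translating the combinatorial definition of the dually Sperner property directly into the forbidden-subgraph language of each incidence graph, exploiting the fact that all three incidence graphs encode exactly the same incidence relation between $V$ and $E$. The cleanest strategy is to establish the negation of each equivalence in parallel: a hypergraph fails to be dually Sperner precisely when there exist two distinct hyperedges $e,f\in E$ with $\min\{|e\setminus f|,|f\setminus e|\}\ge 2$, i.e.~with two vertices $x,y\in e\setminus f$ and two vertices $u,v\in f\setminus e$. I would first observe that these four vertices together with the two hyperedges $e,f$ are exactly the six vertices witnessing the relevant forbidden induced subgraph in each incidence graph, and conversely any copy of the forbidden subgraph yields such a pair of hyperedges.

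First I would set up the witness configuration explicitly: $\mathcal{H}$ is \emph{not} dually Sperner if and only if there are distinct $e,f\in E$ and distinct vertices $x,y\in e\setminus f$, $u,v\in f\setminus e$. In the bigraph of $\mathcal{H}$, the vertices $e,x,y$ and $f,u,v$ induce two vertex-disjoint paths $x\text{--}e\text{--}y$ and $u\text{--}f\text{--}v$ (the incidences $x,y\in e$ and $u,v\in f$ give the four edges, while $x,y\notin f$ and $u,v\notin e$ guarantee the non-edges across the two parts), and since $V,E$ are the two sides these six vertices induce exactly a $2P_3$ with the hyperedge-vertices $e,f$ playing the degree-two (white, $B$-side) roles shown in Fig.~\ref{fig:small-graphs}. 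This establishes $(1)\Leftrightarrow(2)$: a labeled $2P_3$ is present exactly when the incidence witness exists, and the labeling matters precisely because the midpoints must lie in $B = E$.

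Next I would handle $(1)\Leftrightarrow(3)$ and $(1)\Leftrightarrow(4)$ by the same witness. The vertex-clique split graph and the edge-clique split graph differ from the bigraph only in that one side is turned into a clique, so the same six vertices $\{e,f,x,y,u,v\}$ induce the subgraph obtained from $2P_3$ by adding all edges inside the cliqued side. In the edge-clique split graph, where $E$ is the clique, adding the single edge $ef$ to the degree-two midpoints of $2P_3$ produces exactly $H$ (the graph from $2P_3$ plus the edge between its two degree-two vertices), giving $(1)\Leftrightarrow(4)$. In the vertex-clique split graph, where $V$ is the clique, we instead add all edges among the four vertices $x,y,u,v$; since $H$ is obtained from $2P_3$ by adding the edge joining the two degree-two vertices, and complementation swaps which side is completed, the resulting induced subgraph is $\overline{H}$, yielding $(1)\Leftrightarrow(3)$. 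For each direction I would also note the converse: a copy of the forbidden graph forces the degree/adjacency pattern that recovers two hyperedges with symmetric differences of size at least two.

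The main obstacle I anticipate is purely bookkeeping: one must verify that the \emph{labeled} notion of induced subgraph is the right one, i.e.~that the roles of the two parts are respected, so that no spurious copy of $2P_3$, $H$, or $\overline{H}$ arises with the parts exchanged (this is why the remark after Fig.~\ref{fig:small-graphs} pins down the labeling, and why the $2P_3$ witness requires $e,f$ to be the midpoints). I would also carefully confirm the complementation step for $\overline{H}$: completing the four-vertex side of $2P_3$ into a clique rather than adding a single edge is exactly the complement of the $H$ construction, so invoking the definition of $\overline{H}$ as the complement of $H$ closes the argument without a separate adjacency check. Beyond these labeling subtleties, each equivalence is a direct consequence of the single underlying incidence witness, so the proof is short once the configuration is fixed.
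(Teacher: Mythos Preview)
Your proposal is correct and follows exactly the reasoning the paper intends: the paper states this as an observation without proof, relying on the preceding remark (and Fig.~\ref{fig:FF}) that $2P_3$, $H$, and $\overline{H}$ are precisely the bigraph, edge-clique split graph, and vertex-clique split graph of the smallest non-dually-Sperner hypergraph $\{\{u,v\},\{x,y\}\}$. Your witness configuration $e,f,x,y,u,v$ is exactly this, and your translation to each incidence graph is the intended unpacking of that remark.
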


Expressing the $1$-Sperner property of a hypergraph in terms of properties of its incidence graphs now follows.

\begin{corollary}\label{prop:translation-to-graphs}
For every hypergraph $\mathcal{H} = (V,E)$, the following conditions are equivalent:
\begin{enumerate}
  \item $\mathcal{H}$ is $1$-Sperner.
  \item The bigraph of $\mathcal{H}$ is right-Sperner and $2P_3$-free in the labeled sense.
  \item The vertex-clique split graph of $\mathcal{H}$ is \hbox{independent-Sperner} and $\overline{H}$-free.
  \item The edge-clique  split graph of $\mathcal{H}$ is clique-Sperner and $H$-free.
\end{enumerate}
\end{corollary}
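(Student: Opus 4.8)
The plan is to derive \Cref{prop:translation-to-graphs} as an immediate corollary of the two preceding observations, using the characterization of the $1$-Sperner property stated in \Cref{sec:preliminaries}. Recall that a hypergraph $\mathcal{H}$ is $1$-Sperner if and only if it is \emph{both} Sperner and dually Sperner: indeed, $\mathcal{H}$ is Sperner exactly when every two distinct hyperedges $e,f$ satisfy $\min\{|e\setminus f|,|f\setminus e|\}\ge 1$, and dually Sperner exactly when they satisfy $\min\{|e\setminus f|,|f\setminus e|\}\le 1$, so the conjunction of the two properties is precisely the condition $\min\{|e\setminus f|,|f\setminus e|\}=1$ defining $1$-Spernerness. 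This equivalence is already recorded in the text immediately before the statement.

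With this in hand, the proof is a matter of matching up the conditions. For each of the three incidence graphs, the $1$-Sperner property decomposes into a ``Sperner part'' and a ``dually Sperner part,'' and each part translates independently. Concretely, I would argue as follows. By \Cref{obs:Sperner}, $\mathcal{H}$ is Sperner if and only if its bigraph is right-Sperner, if and only if its vertex-clique split graph is independent-Sperner, and if and only if its edge-clique split graph is clique-Sperner. By \Cref{obs:dually-Sperner}, $\mathcal{H}$ is dually Sperner if and only if its bigraph is $2P_3$-free in the labeled sense, if and only if its vertex-clique split graph is $\overline{H}$-free, and if and only if its edge-clique split graph is $H$-free. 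Combining these two biconditionals componentwise gives exactly the three equivalences $(1)\Leftrightarrow(2)$, $(1)\Leftrightarrow(3)$, and $(1)\Leftrightarrow(4)$ asserted in the corollary.

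There is essentially no obstacle here, since all the real work has already been done in \Cref{obs:Sperner} and \Cref{obs:dually-Sperner}; the only point requiring a moment's care is that the Sperner and dually Sperner conditions translate to conditions on the \emph{same} incidence graph (e.g.\ both to properties of the bigraph), so that their conjunction is genuinely a single statement about that graph. This is the case because both observations are phrased for the three fixed incidence graphs of the same hypergraph $\mathcal{H}$, so for, say, the bigraph we may simply conjoin ``right-Sperner'' and ``$2P_3$-free'' to obtain condition $(2)$, and analogously for the two split graphs. Thus the entire statement follows by substituting the equivalences of the two observations into the identity ``$1$-Sperner $=$ Sperner $\wedge$ dually Sperner.''
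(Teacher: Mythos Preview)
Your proposal is correct and follows exactly the same approach as the paper: combine \Cref{obs:Sperner} and \Cref{obs:dually-Sperner} via the equivalence ``$1$-Sperner $\Leftrightarrow$ Sperner and dually Sperner.'' The paper's proof is a one-line invocation of precisely this argument.
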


\begin{proof}
Immediate from Observations~\ref{obs:Sperner} and~\ref{obs:dually-Sperner}, using the fact that $\mathcal{H}$ is $1$-Sperner if and only if it is both Sperner and dually Sperner.
\end{proof}

\section{Decomposition theorems for four graph classes}\label{sec:structure-graphs}

We will now apply the connection between $1$-Sperner hypergraphs and their incidence graphs explained in \Cref{sec:to-graphs} to derive decomposition theorems for a class of split graphs and their complements, and a class of bigraphs and their complements.
To this end, we need to generalize Definition~\ref{def:Sperner-etc} to general (not labeled) bigraphs and split graphs. A bigraph $G$ is said to be \emph{right-Sperner} if it has a bipartition $(A,B)$ such that the labeled bigraph $(G,A,B)$ is right-Sperner. A split graph $G$ is said to be \emph{clique-Sperner} (resp., \emph{\hbox{independent-Sperner}}) if it has a split partition $(K,I)$ such that the labeled split graph $(G,K,I)$ is clique-Sperner (resp., \hbox{independent-Sperner}).

We first prove the decomposition theorem for the class of
$H$-free clique-Sperner split graphs. We start with a useful lemma.

\begin{lemma}\label{lem:split}
Suppose that a split graph $G$ has a split partition $(K,I)$ such that the labeled split graph $(G,K,I)$ is clique-Sperner. Then, the partition $(K,I)$ has the following properties:
\begin{itemize}
  \item if $G$ is edgeless, then $|K|\le 1$,
  \item if $|E(G)| = 1$, then $K=\{v\}$ for some non-isolated vertex $v\in V(G)$, and
  \item if $G$ has at least two edges, then $(K,I)$ is the unique split partition of $G$ such that $I$ is a maximal independent set in $G$.
\end{itemize}
Furthermore, there is an $\mathcal{O}(|V(G)|^3)$ algorithm that computes a split partition $(K,I)$ of a given split graph $G$ such that the labeled split graph $(G,K,I)$ is clique-Sperner (or determines that no such partition exists).
\end{lemma}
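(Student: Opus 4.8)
The plan is to analyze the structure forced by the clique-Sperner condition in each of the three size regimes, and then to establish uniqueness and the algorithmic claim. First I would dispose of the two degenerate cases. If $G$ is edgeless, then every vertex of $K$ has empty neighborhood into $I$, so the clique-Sperner condition $N(u)\cap I\subseteq N(v)\cap I\Rightarrow u=v$ immediately forces $|K|\le 1$, since any two vertices of $K$ would have equal (empty) neighborhoods into $I$. If $|E(G)|=1$, then $G$ has exactly one edge; since $K$ is a clique it can contain at most one endpoint of that edge if $|K|\ge 2$ would force a second edge inside $K$, and the clique-Sperner condition again rules out two vertices with equal neighborhoods into $I$. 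A short case check shows $K$ must consist of a single non-isolated vertex $v$ (the vertex whose one neighbor lies in $I$), with all other vertices placed in $I$.

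\emph{Uniqueness when $G$ has at least two edges.} The main content is the third bullet. I would first argue that the clique-Sperner property prevents $K$ from having too few vertices: intuitively, a maximal independent set $I$ cannot contain any vertex that could be moved into $K$ without violating clique-Sperner, and conversely any vertex of $K$ that could be moved into $I$ must be so moved once we insist $I$ is maximal. Concretely, I would show that clique-Sperner forces $I$ to be a maximal independent set: if some $w\in K$ had no neighbor in $I$ (so $w$ could be added to $I$), then $N(w)\cap I=\emptyset\subseteq N(v)\cap I$ for any other $v\in K$, contradicting clique-Sperner unless $|K|=1$, which is excluded once $|E(G)|\ge 2$. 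Hence every vertex of $K$ has a neighbor in $I$, so $I$ is maximal. For uniqueness I would invoke the standard fact that a split graph with at least two edges has at most two split partitions, differing only by moving a single vertex between the two sides, and such a swap can occur only at a vertex that is simultaneously adjacent to all of $K$ and non-adjacent to all of $I$; I would verify that the clique-Sperner condition, together with $|E(G)|\ge 2$, excludes the alternative partition, pinning down $(K,I)$ as the unique one with $I$ maximal.

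\emph{The algorithm.} For the computational claim, I would first compute any split partition of $G$ (which can be done in linear time by the Hammer–Simeone / degree-sequence characterization, but since we only need cubic time a simpler approach suffices). Given the structure theorem just proved, the strategy is: for $G$ edgeless or with a single edge, output the explicitly described partition directly; otherwise compute the unique split partition $(K,I)$ with $I$ a maximal independent set, then verify in $\mathcal{O}(|V(G)|^3)$ time whether $(G,K,I)$ is clique-Sperner by checking, for each of the $\mathcal{O}(|V(G)|^2)$ pairs $u,v\in K$, whether $N(u)\cap I\subseteq N(v)\cap I$ (each containment test costing $\mathcal{O}(|V(G)|)$). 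If the test passes, output $(K,I)$; otherwise, by the uniqueness established above there is no clique-Sperner split partition, so report failure.

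\emph{Main obstacle.} The step I expect to require the most care is the uniqueness argument in the third bullet: one must handle precisely the borderline where $G$ has exactly two split partitions and rule out the non-maximal alternative using clique-Sperner, checking that the hypothesis $|E(G)|\ge 2$ is genuinely needed (as the $|E(G)|=1$ case shows). The degenerate cases and the algorithm are then routine consequences of the structural dichotomy.
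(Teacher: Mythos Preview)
Your overall approach matches the paper's: dispose of the edgeless and one-edge cases directly, show that clique-Sperner forces $I$ to be a maximal independent set (via the ``empty neighborhood in $I$'' argument), prove uniqueness among split partitions with $I$ maximal, and then describe the obvious $\mathcal{O}(|V(G)|^3)$ algorithm (compute a split partition, adjust so $I$ is maximal, test all pairs in $K$). The paper does exactly this.

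There is one concrete gap. The ``standard fact that a split graph with at least two edges has at most two split partitions, differing only by moving a single vertex'' is false as stated: already $K_3$ has four split partitions, three of which have $I$ a maximal independent set. So you cannot reduce uniqueness to ruling out a single alternative. The paper avoids this by arguing directly: take any other split partition $(K',I')$ with $I'$ maximal, observe that $|K\setminus K'|\le 1$ and $|K'\setminus K|\le 1$ (since each side is a clique contained in the other partition's clique-plus-one-vertex), name the swapped vertices $u\in K\cap I'$ and $v\in K'\cap I$, and then use clique-Sperner plus maximality of $I$ and $I'$ to force $N(u)\cap I=\{v\}$ and $K=\{u\}$, contradicting $|E(G)|\ge 2$. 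Your plan is on the right track and you correctly flagged this step as the delicate one, but the argument must be carried out against an \emph{arbitrary} alternative partition rather than by appeal to a two-partition dichotomy.
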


\begin{proof}
As usual, let us set $n = |V(G)|$ and $m = |E(G)|$.
The cases when $G$ has at most one edge are straightforward, so let us assume that $G$ has at least two edges. Suppose first that $(K,I)$ is any split partition of $G$ such that $(G,K,I)$ is clique-Sperner. Then $I$ is a maximal independent set of $G$, since otherwise there would exist a vertex $v\in K$ such that $N(v)\cap I = \emptyset$, which by the Sperner property implies that $K = \{v\}$ and consequently $G$ is edgeless, a contradiction.
Suppose now that there exists a split partition $(K',I')$ of $G$ different from $(K,I)$ such that $I'$ is a maximal independent set.
Since every vertex in $K'$ has a neighbor in $I'$, we infer that $K$ is not a proper subset of $K'$. Therefore, there exists a unique vertex $u\in I'$ such that $u\in K$. Moreover, $|K'\setminus K|\le 1$. Since $(G,K,I)$ is clique-Sperner, every vertex in $K$ must have a neighbor in $I$. Let $v\in N(u)\cap I$.
Since $u\in I'$ and $I'$ is independent, vertex $v$ must belong to $K'$.
I't follows that $K = (K'\setminus \{v\})\cup \{u\}$ and consequently $I = (I'\setminus\{u\})\cup\{v\}$. Since $I$ is independent, we have $N(v)\cap I' = \{u\}$. On the other hand, since $u\in K$ and $K$ is a clique, $u$ is adjacent to all vertices of $K'$. But now, $N(u)\cap I = \{v\}$.
Since every vertex of $K$ is adjacent to $v$ and $(G,K,I)$ is clique-Sperner, we infer that $K = \{u\}$. This implies that $E(G) = \{\{u,v\}\}$, a contradiction. The above contradiction shows that if $G$ has a split partition $(K,I)$ such that $(G,K,I)$ is clique-Sperner, then $(K,I)$ is a unique split partition of $G$ such that $I$ is a maximal independent set.

It remains to justify that there is an $\mathcal{O}(n^3)$ algorithm to compute a split partition $(K,I)$ of $G$ such that $(G,K,I)$ is clique-Sperner (or determine that no such partition exists). We may assume that $G$ has at least two edges. First, we compute in time $\mathcal{O}(n+m)$ some split partition $(K,I)$ of $G$~\cite{MR637832}. We verify in time $\mathcal{O}(n+m)$ whether $I$ is a maximal independent set of $G$; if it is not, then $G$ contains a vertex $v\in K$ such that $N(v)\subseteq K$ and we move one such vertex to $I$. After this operation, $I$ will be a maximal independent set.
Next, we test in time $\mathcal{O}(|K|^2|I|) = \mathcal{O}(n^3)$ if $(G,K,I)$ is clique-Sperner. This can be done in the stated time directly from the definition by computing all the neighborhoods in $I$ of vertices of $K$ and comparing each pair for inclusion. If $(G,K,I)$ is clique-Sperner, then we are done, and if this is not the case, then, as shown above, we infer that $G$ has no split partition with the desired property.
\end{proof}

\begin{remark}
There exist split graphs $G$ having a unique split partition $(K,I)$ such that $I$ is a maximal independent set but $(G,K,I)$ is not clique-Sperner.
\end{remark}

\begin{sloppypar}
To state the decomposition theorems, it will be useful to use the notion of matrix partitions of graphs; see Sections~\ref{sec:introduction} and~\ref{sec:preliminaries} for references and the definition.
Our theorems will make use of four specific matrices. For $(a,b)\in \{0,1\}^2$, let $M[a,b]$ be the following symmetric $5\times 5$ matrix with entries in $\{0,1,\ast\}$:
$$M[a,b] = \left(
        \begin{array}{ccccc}
          a & a & a & 1 & 0 \\
          a & a & a & \ast & 1 \\
          a & a & a & 0 & \ast \\
          1 & \ast & 0 & b & b \\
          0 & 1 & \ast & b & b \\
        \end{array}
      \right)\,.
$$
\end{sloppypar}

\begin{theorem}\label{thm:decomposition-clique-split-H-free}
Consider the graph $H$ depicted in Fig.~\ref{fig:small-graphs}.
Let $G$ be an $H$-free clique-Sperner split graph with at least two vertices and let $(K,I)$ be any split partition of $G$ such that the labeled split graph $(G,K,I)$ is clique-Sperner.
Then, there exists a partition of $K$ as $K = K^1\cup K^2$
and a partition of $I$ as $I = \{z\}\cup I^1\cup I^2$
such that $\{\{z\},I^1,I^2,K^1,K^2\}$ is an
$M[0,1]$-partition of $G$ (where the rows and the columns are indexed in order by $\{z\},I^1,I^2,K^1,K^2$) such that
the labeled split subgraphs $(G_1,K^1,I^1)$ and $(G_2,K^2,I^2)$ of $(G,K,I)$ induced by the sets $K^1\cup I^1$ and $K^2\cup I^2$, respectively (with inherited split partitions) are $H$-free and clique-Sperner.

Moreover, given an $H$-free split graph $G$, we can compute in time $\mathcal{O}(|V(G)|^3)$ a split partition $(K,I)$ as above, along with an $M[0,1]$-partition with the stated properties, or determine that $G$ is not
clique-Sperner.
\end{theorem}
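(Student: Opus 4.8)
The plan is to obtain the $M[0,1]$-partition by translating the decomposition theorem for $1$-Sperner hypergraphs (\Cref{thm:decomposition}) through the edge-clique split graph correspondence established in \Cref{sec:to-graphs}. First I would invoke \Cref{lem:split}: since $G$ has at least two vertices and is clique-Sperner, I fix the split partition $(K,I)$ in which $I$ is a maximal independent set and $(G,K,I)$ is clique-Sperner. From $(G,K,I)$ I read off a hypergraph $\mathcal{H} = (I, \{N(v)\cap I : v\in K\})$ whose vertices are the elements of $I$ and whose hyperedges are the $I$-neighborhoods of the clique vertices; here the edge-clique split graph of $\mathcal{H}$ is exactly $(G,K,I)$. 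By \Cref{prop:translation-to-graphs}, the fact that $(G,K,I)$ is clique-Sperner and $H$-free is precisely the statement that $\mathcal{H}$ is $1$-Sperner.

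Next I would apply \Cref{thm:decomposition} to $\mathcal{H}$, obtaining a vertex $z\in I$ witnessing $z$-decomposability, so that $\mathcal{H} = \mathcal{H}_1\odot\mathcal{H}_2$ for $1$-Sperner hypergraphs $\mathcal{H}_1 = (V_1,E_1)$ and $\mathcal{H}_2 = (V_2,E_2)$ with $I = \{z\}\cup V_1\cup V_2$. The key is to interpret the block structure of the incidence matrix $A^{\mathcal{H}_1\odot\mathcal{H}_2}$ directly as adjacencies in $G$. Setting $I^1 := V_1$, $I^2 := V_2$, and letting $K^1$ (resp.~$K^2$) be the clique vertices corresponding to hyperedges of the form $\{z\}\cup e$ with $e\in E_1$ (resp.~$V_1\cup e$ with $e\in E_2$), the matrix tells me exactly the required cross-adjacencies: every vertex of $K^1$ is adjacent to $z$ and to no vertex of $I^2$, while every vertex of $K^2$ is nonadjacent to $z$ and adjacent to all of $I^1$; the adjacencies between $K^1$ and $I^1$, and between $K^2$ and $I^2$, are unconstrained ($\ast$ in the matrix). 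Together with the facts that $K = K^1\cup K^2$ is a clique ($b=1$), that $I = \{z\}\cup I^1\cup I^2$ is independent ($a=0$), and that $z$ is nonadjacent to $I^1\cup I^2$, this is verbatim the $M[0,1]$-partition with rows/columns indexed by $\{z\}, I^1, I^2, K^1, K^2$. The induced labeled split subgraphs $(G_1,K^1,I^1)$ and $(G_2,K^2,I^2)$ are then the edge-clique split graphs of $\mathcal{H}_1$ and $\mathcal{H}_2$, which are $1$-Sperner by \Cref{obs:constituents}, hence $H$-free and clique-Sperner again by \Cref{prop:translation-to-graphs}.

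For the algorithmic claim I would first run the $\mathcal{O}(|V(G)|^3)$ routine from \Cref{lem:split} to either produce the clique-Sperner partition $(K,I)$ or certify that none exists; this also handles the reduction to the case of at least two edges. Given $(K,I)$, building the hypergraph $\mathcal{H}$ and finding the decomposition vertex $z$ amounts to scanning the incidence data, which fits within the same cubic budget. The step I expect to require the most care is verifying that the $\ast$-entries are genuinely unrestricted rather than accidentally forced, and that the one exceptional case of the gluing operation (where the result fails to be $1$-Sperner) does not arise here; this is controlled precisely because I start from a hypergraph that is already known to be $1$-Sperner, so \Cref{thm:decomposition} guarantees a valid decomposition and \Cref{obs:constituents} guarantees both constituents inherit the property. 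A minor subtlety is ensuring the block labeled $I^1 = V_1$ and $I^2 = V_2$ are correctly separated from $z$; this follows because $z$ is the glued vertex and the definition of $z$-decomposability forces $N(z)\cap I = \emptyset$ in the hypergraph vertex set, matching the $0$-entries in the first row of $M[0,1]$.
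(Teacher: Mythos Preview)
Your proposal is correct and follows essentially the same route as the paper: build the hypergraph $\mathcal{H}$ on vertex set $I$ with hyperedges $N(v)\cap I$ for $v\in K$, use the edge-clique split graph correspondence to see that $\mathcal{H}$ is $1$-Sperner, apply \Cref{thm:decomposition} to obtain a $z$-decomposition, and read off the $M[0,1]$-partition from the block structure of the gluing, with \Cref{obs:constituents} and \Cref{prop:translation-to-graphs} handling the inherited properties. The only cosmetic difference is that the paper re-derives $1$-Spernerness of $\mathcal{H}$ by hand (Sperner from clique-Sperner, dually Sperner from $H$-freeness) rather than citing \Cref{prop:translation-to-graphs} at that step, and for the algorithmic part it spells out the test for $z$ a bit more concretely (check, for each candidate $z\in I$, whether all edges between $I\cap N_2(z)$ and $K\cap N_2(z)$ are present).
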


\begin{proof}
As before, let $n = |V(G)|$ and $m = |E(G)|$.
If $I = \emptyset$, then $V(G) = K$ and since $(G,K,I)$ is clique-Sperner, we infer that $n = |K| = 1$, a contradiction to $n>1$.
So we have $I\neq\emptyset$. Consider the hypergraph $\mathcal{H} = (V_{\mathcal{H}},E_{\mathcal{H}})$ with vertex set $V_{\mathcal{H}} = I$, edge set $E_{\mathcal{H}} = \{e_v\mid v\in K\}$
where $e_v = N(v)\cap I$, that is, $e_v = \{u\in V_{\mathcal{H}}\mid uv\in E(G)\}$.

We claim that $\mathcal{H}$ is $1$-Sperner, that is, that every two distinct hyperedges $e$ and $f$ of $\mathcal{H}$ satisfy $\min\{|e\setminus f|,|f\setminus e|\} = 1$. Let $e,f\in E_{\mathcal{H}}$ be two distinct edges of $\mathcal{H}$. Then $e = e_v$ and $f = e_{v'}$ for some two vertices $v,v'\in K$. First we show that none of the edges $e$ and $f$ is contained in the other one. If $e\subseteq f$, then $N(v)\cap I= e_v\subseteq e_{v'} = N(v')\cap I$, which implies $v = v'$ since $(G,K,I)$ is clique-Sperner, hence $e = f$, a contradiction. By symmetry, $f\nsubseteq e$ and so $\min\{|e\setminus f|,|f\setminus e|\} \ge 1$, as claimed.

Next, suppose that $\min\{|e\setminus f|,|f\setminus e|\} \ge 2$.
Denoting by $x,y$ and $x',y'$ two pairs of distinct vertices of $\mathcal{H}$ such that $\{x,y\}\subseteq e_v\setminus e_{v'}$ and $\{x',y'\}\subseteq e_{v'}\setminus e_{v}$, we see that the subgraph of $G$ induced by $\{v,x,y,v',x',y'\}$ is isomorphic to $H$, contradicting the fact that $G$ is $H$-free. This shows that $\mathcal{H}$ is $1$-Sperner, as claimed.

Since $\mathcal{H} = (V_{\mathcal{H}},E_{\mathcal{H}})$ is a $1$-Sperner hypergraph with $V_{\mathcal{H}} = I \neq \emptyset$, Theorem~\ref{thm:decomposition} implies the existence of a vertex $z\in I$ such that $\mathcal{H}$ is $z$-decomposable. Let $z\in I$ denote such a vertex and let $\mathcal{H}$ be the corresponding gluing of $\mathcal{H}_1 = (V_1,E_1)$ and $\mathcal{H}_2 = (V_2,E_2)$.
That is, $V(\mathcal{H}) = V_1\cup V_2\cup\{z\}$ and $E_{\mathcal{H}} = \{\{z\}\cup e\mid e\in E_1\} \cup \{V_1\cup e\mid e\in E_2\}$.
Let us write $I^1 = V_1$ and $I^2 = V_2$.
Then $I$ is the disjoint union $I = I^1\cup I^2\cup\{z\}$ and letting $K^1 = N(z)$ and $K^2 = K\setminus K^1$,
the definition of
$\mathcal{H}$ and the fact that $\mathcal{H} = \mathcal{H}_1\odot \mathcal{H}_2$ implies that
\begin{equation}\label{eq-graph}
\begin{aligned}
E(G) &=& E(G[K])\cup \{zv\mid v\in K^1\} \cup\{uv\mid u\in I^1, v\in K^1, u\in e_v\in E_1\}\\
&&\cup\, \{uv\mid u\in I^1, v\in K^2\}\cup \{uv\mid u\in I^2, v\in K^2, u\in e_v\in E_2\}\,.
\end{aligned}
\end{equation}
See Fig.~\ref{fig:example3} for an example.

\begin{figure}[h!]
  \centering
   \includegraphics[width=\textwidth]{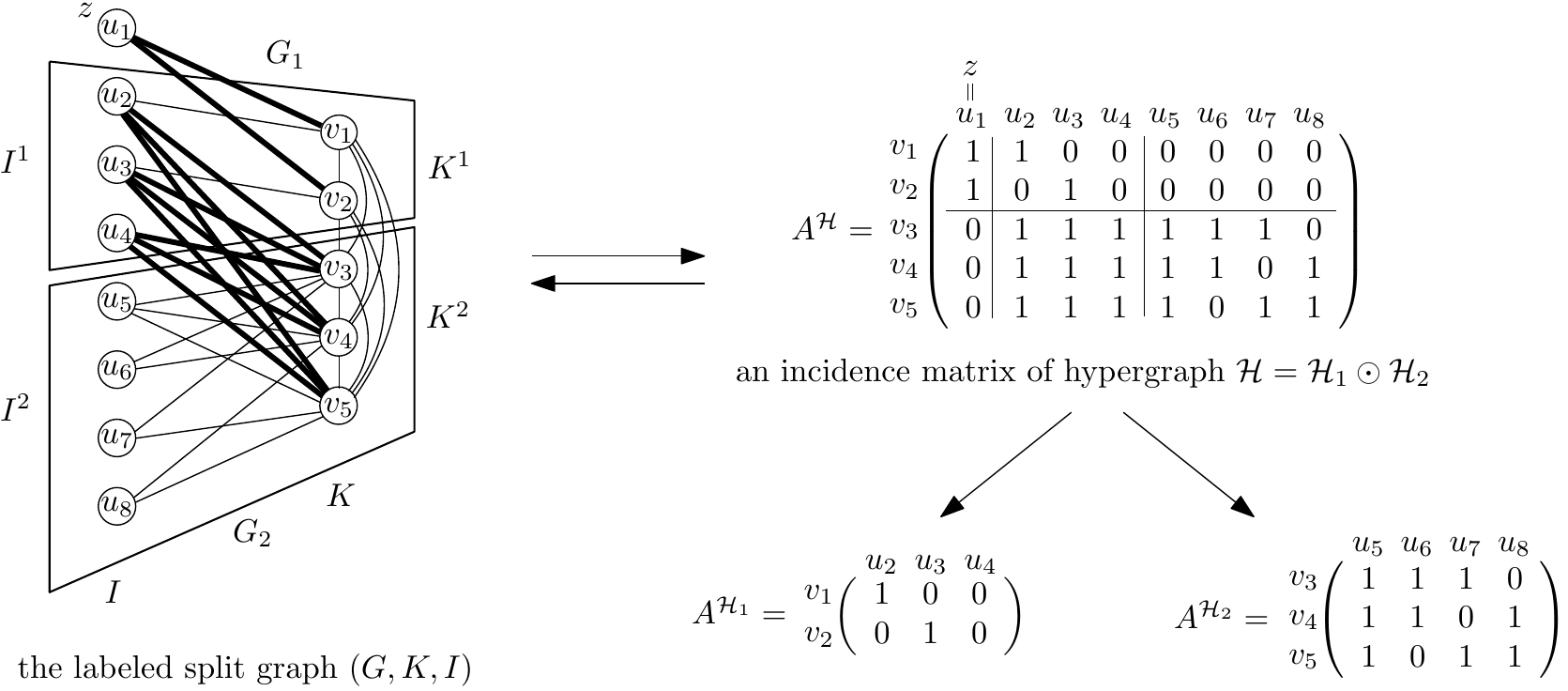}
\caption{Decomposing a given split graph $G$ using a $z$-decomposition of the derived $1$-Sperner hypergraph $\mathcal{H}$.} \label{fig:example3}
\end{figure}

\begin{sloppypar}
We claim that $\{\{z\},I^1,I^2,K^1,K^2\}$ is an
$M[0,1]$-partition of $G$ with the desired property. Equation~\eqref{eq-graph} and the fact that $K$ is a clique and $I$ an independent set imply that $\{\{z\},I^1,I^2,K^1,K^2\}$ is indeed an
$M[0,1]$-partition of $G$. For $i \in \{1,2\}$, let us denote by
$(G_i,K^i,I^i)$ the labeled split subgraph of $(G,K,I)$ induced by the set $K^i\cup I^i$. Then, $(G_i,K^i,I^i)$ is isomorphic to the edge-clique split graph of $\mathcal{H}_i$. By Observation~\ref{obs:constituents}, since $\mathcal{H}$ is $1$-Sperner, so is $\mathcal{H}_i$. Hence, by~\Cref{prop:translation-to-graphs}, the edge-clique  split graph of $\mathcal{H}_i$, that is, $(G_i,K^i,I^i)$, is clique-Sperner. Of course, it is also $H$-free.
\end{sloppypar}

It remains to justify that the above decomposition can be computed in time $\mathcal{O}(n^3)$. Using Lemma~\ref{lem:split}, we can compute in time $\mathcal{O}(n^3)$
a split partition $(K,I)$ of $G$ such that the labeled graph $(G,K,I)$ is clique-Sperner (or determine that $G$ is not clique-Sperner). It is not difficult to see that $G$ has an $M[0,1]$-partition with the desired properties if and only if there exists a vertex $z$ in $I$ such that if we denote by $N_2(z)$ the set of vertices at distance two in $G$ from $z$ and set $I^1 = I\cap N_2(z)$, $K^2 = K\cap N_2(z)$, then $G$ contains all possible edges between vertices of $I^1$ and vertices of $K^2$. If $z\in I$ is such a vertex, then we set $I^2 = I\setminus(\{z\}\cup I^1)$ and $K^1 = K\setminus K^2$, and
$\{\{z\},I^1,I^2,K^1,K^2\}$ forms an $M[0,1]$-partition of $G$ with the desired properties. The above test can be performed in time $\mathcal{O}(n+m) = \mathcal{O}(n^2)$ per vertex, resulting in a total running time of $\mathcal{O}(n^3)$, as claimed.
\end{proof}

The next theorem states the result analogous to~\Cref{thm:decomposition-clique-split-H-free}
but for $\overline{H}$-free independent-Sperner split graphs.

\begin{theorem}\label{thm:decomposition-independent-split-H-bar-free}
Let $G$ be an $\overline{H}$-free independent-Sperner split graph with at least two vertices and let $(K,I)$ be a split partition of $G$ such that the labeled split graph $(G,K,I)$ is independent-Sperner.
Then, there exists a partition of $K$ as $K = \{z\}\cup K^1\cup K^2$
and a partition of $I$ as $I = I^1\cup I^2$
such that $\{\{z\},K^1,K^2,I^1,I^2\}$ is an
$M[1,0]$-partition of $G$ (where the rows and the columns are indexed in order by $\{z\},K^1,K^2,I^1,I^2$) such that
the labeled split subgraphs $(G_1,K^1,I^1)$ and $(G_2,K^2,I^2)$ of $(G,K,I)$ induced by the sets $K^1\cup I^1$ and $K^2\cup I^2$, respectively (with inherited split partitions) are $\overline{H}$-free and independent-Sperner.

Moreover, given an $\overline{H}$-free split graph $G$, we can compute in time $\mathcal{O}(|V(G)|^3)$ a split partition $(K,I)$ as above, along with an $M[1,0]$-partition with the stated properties, or determine that $G$ is not independent-Sperner.
\end{theorem}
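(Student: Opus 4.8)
The plan is to reduce Theorem~\ref{thm:decomposition-independent-split-H-bar-free} to Theorem~\ref{thm:decomposition-clique-split-H-free} by complementation, exploiting the fact that the whole setup is self-dual under passing to the complement graph. The key observation is this: if $G$ is a split graph with split partition $(K,I)$, then its complement $\overline G$ is again a split graph, and the pair $(I,K)$ (with the roles of clique and independent set exchanged) is a split partition of $\overline G$. Moreover, adjacency ``$u\in N(v)$'' in $G$ between $v\in I$ and $u\in K$ becomes non-adjacency in $\overline G$, while edges inside $K$ (resp.\ non-edges inside $I$) become non-edges (resp.\ edges). The first step is therefore to verify carefully that $(G,K,I)$ being independent-Sperner is equivalent to $(\overline G,I,K)$ being clique-Sperner, and that $G$ being $\overline H$-free is equivalent to $\overline G$ being $H$-free. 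Both facts are essentially immediate from the definitions together with \Cref{obs:Sperner} and \Cref{obs:dually-Sperner}: the three incidence graphs of a hypergraph are related by precisely these complementation/role-swapping operations (cf.\ the remark surrounding Fig.~\ref{fig:FF}, where $H$ and $\overline H$ appear as the edge-clique and vertex-clique split graphs of the same non-$1$-Sperner hypergraph).

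\textbf{Main argument.} Given $G$ as in the statement, I would form $\overline G$ with the swapped split partition $(I,K)$. By the equivalences above, $(\overline G, I, K)$ is an $H$-free clique-Sperner labeled split graph, with clique $I$ and independent set $K$, and it has at least two vertices. Applying \Cref{thm:decomposition-clique-split-H-free} to $\overline G$ (whose clique is $I$ and independent set is $K$) yields a distinguished vertex $z$ lying in the \emph{independent} set of $\overline G$, which is $K$; hence $z\in K$, exactly as required. The theorem gives a partition $K = \{z\}\cup K^1\cup K^2$ of the independent set of $\overline G$ and a partition $I = I^1\cup I^2$ of its clique, such that the relevant partition of $\overline G$ is an $M[0,1]$-partition and the induced labeled split subgraphs $(\overline G_i, I^i, K^i)$ are $H$-free and clique-Sperner. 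Translating back by complementing, the partition $\{\{z\},K^1,K^2,I^1,I^2\}$ of $G$ becomes an $\overline{M[0,1]}$-partition, where $\overline{M[0,1]}$ denotes the matrix obtained from $M[0,1]$ by swapping $0\leftrightarrow 1$ on the off-diagonal (and on the diagonal). The crucial checkpoint is to confirm that this complemented matrix, read with rows/columns ordered as $\{z\},K^1,K^2,I^1,I^2$, is exactly $M[1,0]$; this is a direct symbol-by-symbol comparison of the two $5\times5$ matrices (note $\ast$ is fixed under complementation). Finally, each $(G_i,K^i,I^i)$ is the complement of $(\overline G_i,I^i,K^i)$, so by the same equivalences it is $\overline H$-free and independent-Sperner, as needed.

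\textbf{Algorithmic part and main obstacle.} For the running-time claim, given an $\overline H$-free split graph $G$, I would compute $\overline G$ in time $\mathcal O(|V(G)|^2)$, run the $\mathcal O(|V(G)|^3)$ algorithm of \Cref{thm:decomposition-clique-split-H-free} on $\overline G$ to obtain either a clique-Sperner split partition of $\overline G$ together with the $M[0,1]$-partition, or a certificate that $\overline G$ is not clique-Sperner. In the former case I complement back to obtain the desired $M[1,0]$-partition of $G$; in the latter case, since $\overline G$ clique-Sperner $\Leftrightarrow$ $G$ independent-Sperner, I correctly report that $G$ is not independent-Sperner. The overall time is dominated by the $\mathcal O(|V(G)|^3)$ call plus the $\mathcal O(|V(G)|^2)$ complementation, hence $\mathcal O(|V(G)|^3)$. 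I expect the only real obstacle to be the bookkeeping in the matrix-complementation step: one must check that the off-diagonal pattern of $1$'s, $0$'s, and $\ast$'s in $M[0,1]$, after swapping $0\leftrightarrow1$ and re-indexing so that the former independent set $K$ of $\overline G$ takes the block-positions that the clique $K$ occupies in the $M[1,0]$ layout, matches $M[1,0]$ exactly—in particular that the asymmetric incidence blocks between the $\{z\},I^1,I^2$ rows and the $K^1,K^2$ columns transform correctly. Everything else is a routine consequence of the complementation dictionary established in the first paragraph and of \Cref{thm:decomposition-clique-split-H-free}.
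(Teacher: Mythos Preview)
Your proposal is correct and is exactly the paper's approach: the paper's entire proof is the single sentence ``Apply Theorem~\ref{thm:decomposition-clique-split-H-free} to the complement of $G$.'' You have correctly expanded the details the paper leaves implicit, including the one genuine checkpoint you flag---that complementing the $M[0,1]$-partition and re-indexing (swapping the roles $1\leftrightarrow 2$ in both the $K$- and $I$-blocks) yields precisely $M[1,0]$.
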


\begin{proof}
Apply Theorem~\ref{thm:decomposition-clique-split-H-free} to the complement of $G$.
\end{proof}

The following two theorems can be shown analogously, for proofs see Appendix.

\begin{theorem}\label{thm:decomposition-2P3-free-right-Sperner-bigraphs}
Let $G$ be a $2P_3$-free right-Sperner bigraph with at least two vertices and let $(A,B)$ be a bipartition of $G$ such that the labeled bigraph $(G,A,B)$ is right-Sperner. Then, there exists a partition of $A$ as $A = \{z\}\cup A^1\cup A^2$ and a partition of $B$ as $B = B^1\cup B^2$
such that $\{\{z\},A^1,A^2,B^1,B^2\}$ is an
$M[0,0]$-partition of $G$ (where the rows and the columns are indexed in order by $\{z\},A^1,A^2,B^1,B^2$) such that
the labeled bipartite subgraphs $(G_1,A^1,B^1)$ and $(G_2,A^2,B^2)$ of $(G,A,B)$ induced by the sets $A^1\cup B^1$ and $A^2\cup B^2$, respectively (with inherited bipartitions), are $2P_3$-free and right-Sperner.

Moreover, given an $2P_3$-free bigraph $G$, we can compute in time $\mathcal{O}(|V(G)|^3)$ a bipartition $(A,B)$ as above, along with an $M[0,0]$-partition with the stated properties, or determine that $G$ is not right-Sperner.
\end{theorem}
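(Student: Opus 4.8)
The plan is to reduce Theorem~\ref{thm:decomposition-2P3-free-right-Sperner-bigraphs} to the decomposition theorem for $1$-Sperner hypergraphs (\Cref{thm:decomposition}) in exactly the same way as was done for split graphs in the proof of~\Cref{thm:decomposition-clique-split-H-free}. Given a $2P_3$-free right-Sperner bigraph $G$ with bipartition $(A,B)$ such that $(G,A,B)$ is right-Sperner, I would associate to it the natural hypergraph $\mathcal{H} = (V_{\mathcal{H}}, E_{\mathcal{H}})$ whose vertex set is $V_{\mathcal{H}} = A$ and whose hyperedges are $E_{\mathcal{H}} = \{e_u \mid u\in B\}$, where $e_u = N(u) \subseteq A$. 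In other words, $(G,A,B)$ is precisely the bigraph of $\mathcal{H}$ in the sense of the definition in \Cref{sec:to-graphs}. By \Cref{prop:translation-to-graphs} (equivalence of (1) and (2)), the fact that $(G,A,B)$ is right-Sperner and $2P_3$-free in the labeled sense is exactly equivalent to $\mathcal{H}$ being $1$-Sperner; alternatively one can verify this directly, since right-Spernerness encodes the Sperner property and $2P_3$-freeness encodes the dually Sperner property (two hyperedges each containing two elements not in the other would produce an induced $2P_3$ on the corresponding six vertices).

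Next I would apply \Cref{thm:decomposition} to $\mathcal{H}$ to obtain a vertex $z \in A = V_{\mathcal{H}}$ such that $\mathcal{H}$ is $z$-decomposable, that is, $\mathcal{H} = \mathcal{H}_1 \odot \mathcal{H}_2$ for $1$-Sperner hypergraphs $\mathcal{H}_1 = (V_1,E_1)$ and $\mathcal{H}_2 = (V_2,E_2)$ with $A = \{z\}\cup V_1\cup V_2$. Setting $A^1 = V_1$, $A^2 = V_2$, $B^1 = \{u\in B\mid e_u\in E_1\text{ in the sense } e_u = \{z\}\cup e, e\in E_1\}$ and $B^2$ the analogous set indexing $E_2$, I would read off from the gluing formula for $E_{\mathcal{H}}$ exactly which adjacencies occur between the five parts. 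Concretely, every hyperedge coming from $E_1$ contains $z$ and lies inside $\{z\}\cup A^1$, while every hyperedge coming from $E_2$ contains all of $V_1 = A^1$ together with a subset of $A^2$. This yields: $z$ is adjacent to all of $B^1$ and to none of $B^2$; every vertex of $A^1$ is adjacent to all of $B^2$; $A^2$ has no neighbors in $B^1$; and the adjacencies between $A^1$ and $B^1$, resp.\ between $A^2$ and $B^2$, are the incidences of $\mathcal{H}_1$, resp.\ $\mathcal{H}_2$. Since $A$ and $B$ are independent sets, the diagonal blocks are all $0$. Matching these constraints against the matrix $M[0,0]$ (with rows and columns ordered $\{z\},A^1,A^2,B^1,B^2$) confirms that $\{\{z\},A^1,A^2,B^1,B^2\}$ is an $M[0,0]$-partition. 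That $(G_i,A^i,B^i)$ is the bigraph of $\mathcal{H}_i$, combined with \Cref{obs:constituents} (the constituents of a $1$-Sperner gluing are $1$-Sperner) and \Cref{prop:translation-to-graphs}, gives that each $(G_i,A^i,B^i)$ is $2P_3$-free and right-Sperner.

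For the algorithmic statement I would first note that a right-Sperner bipartition can be computed efficiently: the right-Sperner condition only constrains the $B$-side neighborhoods, so unlike the split case (where \Cref{lem:split} was needed to pin down the correct split partition), the bipartition of a connected bigraph is essentially forced, and one checks the Sperner condition by comparing the neighborhoods of the $\mathcal{O}(|V(G)|)$ vertices of $B$ pairwise in time $\mathcal{O}(|V(G)|^3)$. Then, exactly as in the last paragraph of the proof of~\Cref{thm:decomposition-clique-split-H-free}, the decomposition vertex $z$ can be found by a direct combinatorial test: $z \in A$ works if and only if, setting $A^1 = A\cap N_2(z)$ and $B^2 = B\setminus N(z)$ (the vertices of $B$ at distance greater than one, equivalently not adjacent to $z$), all edges between $A^1$ and $B^2$ are present; this is checkable in $\mathcal{O}(|V(G)|^2)$ time per candidate $z$, for $\mathcal{O}(|V(G)|^3)$ overall.

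The main obstacle I anticipate is bookkeeping rather than conceptual difficulty: one must take care to verify that the off-diagonal adjacency pattern dictated by the gluing formula matches $M[0,0]$ \emph{in the correct orientation}, since $2P_3$, $H$, and $\overline{H}$ are the bigraph, edge-clique split graph, and vertex-clique split graph of the \emph{same} smallest non-$1$-Sperner hypergraph (as the remark preceding \Cref{obs:dually-Sperner} records), and the three decomposition theorems differ only by which incidence-graph representation is used and hence by the diagonal entries $(a,b)$ of $M[a,b]$. In particular, one should confirm that the degenerate cases (some of $A^1, A^2, B^1, B^2$ empty, or $E_1$ or $E_2$ empty, or $G$ disconnected/with isolated vertices) are consistent with the $\ast$-entries of $M[0,0]$ and do not break the labeled $2P_3$-freeness of the pieces; these are exactly the low-complexity boundary cases that \Cref{thm:decomposition} already handles at the hypergraph level.
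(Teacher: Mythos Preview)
Your proposal is correct and follows essentially the same approach as the paper's proof: define the hypergraph $\mathcal{H}$ on vertex set $A$ with hyperedges $N(u)$ for $u\in B$, use \Cref{prop:translation-to-graphs} to see that $\mathcal{H}$ is $1$-Sperner, apply \Cref{thm:decomposition} to obtain the $z$-decomposition, and translate the gluing back into the $M[0,0]$-partition with $B^1=N(z)$; the algorithmic part likewise matches, with the paper only adding the explicit observation that a $2P_3$-free bigraph has at most one component on more than two vertices when locating a right-Sperner bipartition.
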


\begin{sloppypar}
\begin{theorem}\label{thm:decomposition-co-2P3-free-right-Sperner-cobigraphs}
Let $G$ be a $\overline{2P_3}$-free cobipartite graph with at least two vertices such that $\overline{G}$ is right-Sperner
and let $(A,B)$ be a bipartition of $\overline{G}$ such that the labeled bigraph $(\overline{G},A,B)$ is right-Sperner.
Then, there exists a partition of $A$ as $A = \{z\}\cup A^1\cup A^2$ and a partition of $B$ as $B = B^1\cup B^2$
such that $\{\{z\},A^1,A^2,B^1,B^2\}$ is an
$M[1,1]$-partition of $G$ (where the rows and the columns are indexed in order by $\{z\},A^1,A^2,B^1,B^2$) such that
the subgraphs of $G$ induced by the sets $A^1\cup B^1$ and $A^2\cup B^2$, respectively, are $\overline{2P_3}$-free and their complements are right-Sperner.

Moreover, given a $\overline{2P_3}$-free cobipartite graph $G$, we can compute in time $\mathcal{O}(|V(G)|^3)$ a bipartition $(A,B)$of $\overline{G}$ as above, along with an $M[1,1]$-partition of $G$ with the stated properties, or determine that $\overline{G}$ is not right-Sperner.
\end{theorem}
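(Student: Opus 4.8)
The plan is to reduce the statement to \Cref{thm:decomposition-2P3-free-right-Sperner-bigraphs} by passing to the complement. First I would check that the hypotheses on $G$ translate verbatim into the hypotheses of the bigraph theorem applied to $\overline{G}$: since $G$ is cobipartite, $\overline{G}$ is a bigraph; since $\overline{2P_3}$ is the complement of $2P_3$ and complementation preserves the induced-subgraph relation, $G$ being $\overline{2P_3}$-free is equivalent to $\overline{G}$ being $2P_3$-free; and the assumption that $\overline{G}$ is right-Sperner with respect to the given bipartition $(A,B)$ is already phrased directly about $\overline{G}$. Hence $\overline{G}$ together with $(A,B)$ satisfies the hypotheses of \Cref{thm:decomposition-2P3-free-right-Sperner-bigraphs}, which yields partitions $A=\{z\}\cup A^1\cup A^2$ and $B=B^1\cup B^2$ forming an $M[0,0]$-partition of $\overline{G}$, with the two induced labeled subgraphs $(\overline{G}[A^1\cup B^1],A^1,B^1)$ and $(\overline{G}[A^2\cup B^2],A^2,B^2)$ being $2P_3$-free and right-Sperner.

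I would then complement back, the only bookkeeping point being that complementing a graph turns an $M$-partition of $G$ into the partition (on the same parts) whose defining matrix is obtained from $M$ by interchanging the roles of $0$ and $1$ while leaving every $\ast$ fixed; this is immediate from the definition of an $M$-partition, and it correctly turns the two all-$0$ diagonal blocks of $M[0,0]$ (which encode the independent sides $A,B$ of $\overline{G}$) into the all-$1$ diagonal blocks of the target matrix (the cliques $A,B$ of $G$). The one genuine subtlety is that the $0/1$ pattern in the off-diagonal ($A$-versus-$B$) block of the complemented matrix sits in the \emph{wrong} positions relative to $M[1,1]$. This is repaired by a single relabeling: a direct check shows that simultaneously swapping the superscripts $1\leftrightarrow 2$ on the $A$-parts and on the $B$-parts (the part-permutation fixing $\{z\}$ and exchanging $A^1\leftrightarrow A^2$, $B^1\leftrightarrow B^2$) carries the complemented pattern precisely onto $M[1,1]$. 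Equivalently, $M[a,b]$ is invariant under the combination of this permutation and the $0\leftrightarrow 1$ swap, so $M[0,0]$ maps to $M[1,1]$. Since this is merely a renaming of the two halves of the decomposition and both halves satisfy the same conclusion, it costs nothing: using $\overline{G}[A^i\cup B^i]=\overline{G[A^i\cup B^i]}$ together with the fact that $\overline{2P_3}$ is the complement of $2P_3$, each induced subgraph $G[A^i\cup B^i]$ is $\overline{2P_3}$-free with right-Sperner complement, exactly as required.

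For the algorithmic part I would compute $\overline{G}$ in time $\mathcal{O}(|V(G)|^2)$, run the $\mathcal{O}(|V(G)|^3)$ algorithm of \Cref{thm:decomposition-2P3-free-right-Sperner-bigraphs} on the bigraph $\overline{G}$ to obtain $(A,B)$ and the $M[0,0]$-partition (or the verdict that $\overline{G}$ is not right-Sperner), and finally carry out the complement-and-relabel translation above in time $\mathcal{O}(|V(G)|^2)$, for a total running time of $\mathcal{O}(|V(G)|^3)$. I expect the main obstacle to be purely notational rather than conceptual, namely keeping the five parts, their roles as cliques versus independent sets, and the $0/1/\ast$ entries mutually consistent through the two simultaneous operations (complementing the graph and applying the induced renaming of parts); once the invariance $M[0,0]\mapsto M[1,1]$ under complementation-plus-permutation is pinned down, the rest follows mechanically from the bigraph theorem.
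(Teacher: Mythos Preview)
Your proof is correct and takes essentially the same approach as the paper, whose entire proof reads ``Apply Theorem~\ref{thm:decomposition-2P3-free-right-Sperner-bigraphs} to the complement of $G$.'' You are in fact more careful than the paper: you correctly identify and handle the relabeling $A^1\leftrightarrow A^2$, $B^1\leftrightarrow B^2$ needed to carry the complemented $M[0,0]$-pattern onto $M[1,1]$, a bookkeeping step the paper leaves implicit.
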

\end{sloppypar}

\section{Implications for clique-width}\label{sec:clique-width}

Building on previous work of Lozin and Volz~\cite{MR2414868}, Dabrowski and Paulusma~\cite{MR3442572} obtained a complete classification of bigraphs $F$ such that the class of $F$-free bigraphs is of bounded clique-width. Brandst\"adt et al.~\cite{MR3515312} obtained similar results for the classes of $F$-free split graphs, for a split graph $F$. In particular, results from~\cite{MR3515312,MR2414868} imply the following theorem.

\begin{theorem}\label{thm:unbounded}
The classes of $H$-free split graphs, of $\overline{H}$-free split graphs,
and of $2P_3$-free bigraphs are of unbounded clique-width.
\end{theorem}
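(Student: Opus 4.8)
The plan is to prove unbounded clique-width by citing the classification results of~\cite{MR3515312,MR2414868} and then verifying, for each of the three graph classes, that the relevant forbidden subgraph $F$ (namely $H$, $\overline{H}$, or $2P_3$) does \emph{not} appear in the corresponding list of graphs whose $F$-freeness forces bounded clique-width. In other words, for each class I would locate the precise dichotomy theorem in the cited paper and check that the single graph defining our class falls on the ``unbounded'' side of the dichotomy. This reduces the whole statement to three lookups in the literature, so the core of the ``proof'' is really a matter of correctly identifying $H$, $\overline H$, and $2P_3$ among the graphs classified there.

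The three cases are naturally grouped into two pieces. First I would handle the two split-graph classes together, since Brandst\"adt et al.~\cite{MR3515312} give a complete dichotomy for $F$-free split graphs: the class of $F$-free split graphs has bounded clique-width if and only if $F$ (or its complement, by the complementation symmetry of split graphs) belongs to a specific finite list of small split graphs. Because the class of split graphs is self-complementary and clique-width is preserved under complementation, the cases of $H$-free and $\overline H$-free split graphs are equivalent: it suffices to check that $H$ is \emph{not} in that list. The graph $H$ has six vertices, and I would verify directly from Fig.~\ref{fig:small-graphs-1} that it is none of the bounded-clique-width exceptions, which are graphs on at most five vertices or specific small families; hence both split classes are of unbounded clique-width.

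Second, for $2P_3$-free bigraphs I would invoke the bipartite dichotomy of Dabrowski and Paulusma~\cite{MR3442572} (building on Lozin and Volz~\cite{MR2414868}), which lists exactly those bipartite $F$ for which $F$-free bigraphs have bounded clique-width. Since $2P_3$ is a $6$-vertex bipartite graph that is a disjoint union of two copies of $P_3$, I would confirm that it does not appear in that list of bounded cases, so $2P_3$-free bigraphs are of unbounded clique-width.

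The statement is essentially a corollary of these published classifications, so there is no genuine mathematical obstacle in the sense of a new argument; the real work and the main place where care is needed is the \emph{bookkeeping}: correctly matching our graphs $H$, $\overline H$, and $2P_3$ against the explicit (and sometimes idiosyncratically named) lists of small graphs in~\cite{MR3515312} and~\cite{MR3442572}, and making sure the complementation reduction for split graphs is applied correctly. An alternative, more self-contained route, should a direct citation be awkward, would be to exhibit an explicit family of graphs of growing clique-width inside each class, for instance by encoding large grids or bipartite permutation-like structures that avoid the respective forbidden subgraph; but since the cited dichotomies already settle the question, I would rely on them and keep the argument to the verification described above.
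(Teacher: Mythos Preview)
Your proposal is correct and matches the paper's approach exactly: the paper does not give a standalone proof but simply states that the theorem follows from the classification results in~\cite{MR3515312,MR2414868}, and your plan is precisely to unpack those citations and verify that $H$, $\overline{H}$, and $2P_3$ fall on the unbounded side of the respective dichotomies. One small wording issue: clique-width is not literally \emph{preserved} under complementation, only up to the factor~$2$ in~\eqref{eq:complement}; but since you only need that boundedness is preserved, your reduction of the $\overline{H}$-free split case to the $H$-free split case is valid.
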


In Sections~\ref{sec:to-graphs} and~\ref{sec:structure-graphs}
we developed, using a unifying approach based on $1$-Sperner hypergraphs,
decomposition theorems for classes of graphs considered by \Cref{thm:unbounded}, under an additional constraint on the structure of neighborhoods. Contrary to the result of \Cref{thm:unbounded},
we now show that these additional constraints and the consequent
structural results imply that the clique-width of graphs in the corresponding classes is bounded by 5.

We first consider the case of $H$-free clique-Sperner split graphs.

\begin{theorem}\label{thm:cwd-clique-split}
The clique-width of every $H$-free clique-Sperner split graph is at most $5$. Moreover, given an $H$-free clique-Sperner split graph $G$, we can compute in time $\mathcal{O}(|V(G)|^4)$ a $5$-expression $\tau$ of $G$
such that $|\tau|= O(|V(G)|)$.
\end{theorem}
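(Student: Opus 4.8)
The plan is to prove this theorem by induction, leveraging the recursive decomposition from Theorem~\ref{thm:decomposition-clique-split-H-free} and translating each step of the decomposition into operations on $k$-expressions. The key observation is that the $M[0,1]$-partition $\{\{z\}, I^1, I^2, K^1, K^2\}$ has a very rigid structure: the single vertex $z$ is adjacent to all of $K^1$ and none of $K^2$; all edges between $I^1$ and $K^2$ are present; no edges run between $I^1$ and $\{z\}\cup K^1$ beyond those already inside the subgraph $G_1$; and the two induced subgraphs $(G_1, K^1, I^1)$ and $(G_2, K^2, I^2)$ are again $H$-free and clique-Sperner, so the induction hypothesis applies to each. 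The bounded number of parts (five) is what should keep the label budget constant across the recursion.

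First I would set up the induction on $|V(G)|$, with the base case being a graph on at most two vertices (or, more conveniently, handling the degenerate cases of Lemma~\ref{lem:split} where $G$ is edgeless or has a single edge, which trivially have clique-width at most $2$). For the inductive step, I would invoke Theorem~\ref{thm:decomposition-clique-split-H-free} to obtain the split partition $(K,I)$ and the $M[0,1]$-partition, together with the guarantee that $G_1$ and $G_2$ are again $H$-free clique-Sperner split graphs on fewer vertices. By induction I obtain $5$-expressions $\tau_1$ and $\tau_2$ for $G_1$ and $G_2$. The crux of the construction is to combine these: I would arrange that, at the end of building $G_i$, the vertices of $K^i$ all carry one reserved label and the vertices of $I^i$ all carry another reserved label, using the relabeling operation $\rho_{i\to j}$ to collapse the internal labels of each $\tau_i$ down onto these two ``summary'' labels once the subgraph is fully built. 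With $G_1$ using labels from one pair and $G_2$ from a disjoint pair, plus a fifth label for $z$, I can then take the disjoint union, add the edge joins dictated by $M[0,1]$ (namely $z$ to $K^1$, and $I^1$ to $K^2$, as well as the clique edges $K^1$ to $K^2$ that complete $K$), and finally recolor everything to re-establish the invariant that the whole of $K$ shares one label and the whole of $I$ shares another.

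The main obstacle, and the step deserving the most care, is the bookkeeping of labels so that the total never exceeds five while still allowing all the required joins to be performed. The difficulty is that $K$ must eventually be a single clique with all internal edges present, yet the decomposition builds $K^1$ and $K^2$ separately, so I must ensure the clique edges between $K^1$ and $K^2$ get added and that $z$ is correctly joined to $K^1$ only. Concretely, I would maintain the invariant that any $H$-free clique-Sperner split graph admits a $5$-expression at whose root the clique part $K$ carries one designated label and the independent part $I$ carries a second designated label; three further labels suffice as scratch space to distinguish $z$, $K^1$ versus $K^2$, and $I^1$ versus $I^2$ during the single assembly step, after which they are recycled. Verifying that exactly the edges of $G$ are produced amounts to checking the entries of $M[0,1]$ against the join operations applied, which is routine given equation~\eqref{eq-graph}.

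Finally, for the algorithmic claim I would argue that the recursion tree has $\mathcal{O}(|V(G)|)$ leaves, since each internal node removes at least the vertex $z$; computing each $M[0,1]$-partition costs $\mathcal{O}(|V(G)|^3)$ by Theorem~\ref{thm:decomposition-clique-split-H-free}, and there are $\mathcal{O}(|V(G)|)$ such computations along the recursion, giving the $\mathcal{O}(|V(G)|^4)$ bound. The length bound $|\tau| = \mathcal{O}(|V(G)|)$ follows because each recursive step contributes only a constant number of join and relabel operations on top of the two subexpressions, so the total number of symbols is linear in the number of vertices. I expect the linear-length and the label-budget arguments to be the parts requiring the most explicit care, while the correctness of the edge set is a direct consequence of the $M[0,1]$-partition structure.
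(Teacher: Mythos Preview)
Your proposal is correct and follows essentially the same approach as the paper: induction via Theorem~\ref{thm:decomposition-clique-split-H-free}, collapsing the labels of each recursively built subexpression before combining, and using a fifth label for $z$ together with the three $\eta$-joins dictated by $M[0,1]$. The only cosmetic difference is that you maintain the stronger invariant that $K$ and $I$ each end with a single label, whereas the paper allows $I$ to use labels from $\{1,2,3\}$ and $K$ from $\{4,5\}$ and simply relabels down to one label per side immediately before each combination step; the label budget, the linear length bound, and the $\mathcal{O}(n^4)$ running-time analysis are identical.
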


\begin{proof}
Let $G$ be an $H$-free clique-Sperner split graph.
As usual, we set $n = |V(G)|$.
Using Lemma~\ref{lem:split}, we can compute in time $\mathcal{O}(n^3)$
a split partition $(K,I)$ of $G$ such that the labeled graph $(G,K,I)$ is clique-Sperner.

We show by induction on $n$ that $G$ admits a $5$-expression
of length at most $60n$ such that the label of every vertex in $I$ is from $\{1,2,3\}$  and the label of every vertex in $K$ is from $\{4,5\}$. If $I = \emptyset$, then $V(G) = K$ and since $(G,K,I)$ is clique-Sperner, we infer that $n = |K| = 1$ and the statement holds in this case. So let $I\neq\emptyset$. By Theorem~\ref{thm:decomposition-clique-split-H-free},
we can compute in time $\mathcal{O}(n^3)$ an $M[0,1]$-partition $\{\{z\},I^1,I^2,K^1,K^2\}$ of $G$ (where the rows and the columns are indexed in order by $\{z\},I^1,I^2,K^1,K^2$) such that
the labeled split subgraphs $(G_1,K^1,I^1)$ and $(G_2,K^2,I^2)$ of $(G,K,I)$ induced by the sets $K^1\cup I^1$ and $K^2\cup I^2$, respectively (with inherited split partitions) are $H$-free and clique-Sperner.
For $i\in \{1,2\}$, let $n_i = |V(G_i)|$.
By the induction hypothesis, $G_i$ admits a $5$-expression $\tau_i$ of length at most $60n_i$ such that the label of every vertex in $V_i$ is from $\{1,2,3\}$  and
the label of every vertex in $K_i$ is from $\{4,5\}$.
Let
\begin{eqnarray*}
\tau_1' &=& \rho_{1\to 2}(\rho_{3\to 2}(\rho_{5\to 4}(\tau_1)))\,, \\
\tau_2' &=& \rho_{1\to 3}(\rho_{2\to 3}(\rho_{4\to 5}(\tau_2)))\,, \\
\tau &=& \eta_{1,4}(\eta_{2,5}(\eta_{4,5}(1(z)\oplus \tau_1'\oplus \tau_2')))\,.
\end{eqnarray*}
The fact that $\{\{z\},I^1,I^2,K^1,K^2\}$ is an $M[0,1]$-partition of $G$
implies that $\tau$ is a $5$-expression of $G$ such that
the label of every vertex in $I$ is from $\{1,2,3\}$  and
the label of every vertex in $K$ is from $\{4,5\}$.
Moreover, we have $|\tau| = |\tau_1|+|\tau_2|+60\le 60n$.
This completes the proof of the inductive step.

The inductive proof can be turned into a polynomial-time algorithm.
Every time we remove one vertex (denoted by $z$ in the above argument) and call the algorithm recursively on two smaller vertex-disjoint subgraphs $(G_1,K^1,I^1)$ and $(G_2,K^2,I^2)$.
If we denote the worst-case computing time on an $n$-vertex graph by $T(n)$,
then we see that $T(n)$ satisfies the following recursion:
$$T(n_1+n_2+1)\le T(n_1)+T(n_2)+\mathcal{O}((n_1+n_2+1)^3)\,.$$
It is not difficult to see that $T(n) = \mathcal{O}(n^4)$.
\end{proof}

The following two theorems can be proved analogously.

\begin{sloppypar}
\begin{theorem}\label{thm:cwd-independent-split}
The clique-width of every $\overline{H}$-free \hbox{independent-Sperner} split graph is at most $5$. Moreover, given an $\overline{H}$-free \hbox{independent-Sperner} split graph $G$, we can compute a $5$-expression of $G$ in time $\mathcal{O}(|V(G)|^4)$.
\end{theorem}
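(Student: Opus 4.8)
The plan is to mirror the inductive argument used for \Cref{thm:cwd-clique-split}, but driven by the decomposition of \Cref{thm:decomposition-independent-split-H-bar-free} and its $M[1,0]$-partition in place of the $M[0,1]$-partition. First I would invoke \Cref{thm:decomposition-independent-split-H-bar-free} to compute, in time $\mathcal{O}(n^3)$ where $n=|V(G)|$, a split partition $(K,I)$ with $(G,K,I)$ independent-Sperner together with an $M[1,0]$-partition $\{\{z\},K^1,K^2,I^1,I^2\}$, where $z\in K$ and the two induced labeled split subgraphs $(G_1,K^1,I^1)$ and $(G_2,K^2,I^2)$ are again $\overline{H}$-free and independent-Sperner. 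I would then prove by induction on $n$ the stronger statement that $G$ admits a $5$-expression of length $\mathcal{O}(n)$ in which every vertex of $K$ carries a label from $\{1,2,3\}$ and every vertex of $I$ a label from $\{4,5\}$; the base case $n=1$ is a single-vertex expression respecting this convention.

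For the inductive step, given $5$-expressions $\tau_1,\tau_2$ of $G_1,G_2$ satisfying the labeling convention, I would first funnel each nonempty part into a single private label,
\[
\tau_1' = \rho_{5\to 4}(\rho_{3\to 2}(\rho_{1\to 2}(\tau_1)))\,,\qquad
\tau_2' = \rho_{4\to 5}(\rho_{2\to 3}(\rho_{1\to 3}(\tau_2)))\,,
\]
so that in $\tau_1'$ all of $K^1$ has label $2$ and all of $I^1$ has label $4$, while in $\tau_2'$ all of $K^2$ has label $3$ and all of $I^2$ has label $5$; the vertex $z$ is created with label $1$. I would then realize exactly the cross-adjacencies prescribed by $M[1,0]$ with a fixed block of edge-insertions,
\[
\tau = \eta_{2,5}(\eta_{2,3}(\eta_{1,4}(\eta_{1,3}(\eta_{1,2}(1(z)\oplus \tau_1'\oplus \tau_2')))))\,.
\]
Here $\eta_{1,2},\eta_{1,3},\eta_{1,4}$ attach $z$ to $K^1,K^2,I^1$ (but not to $I^2$), $\eta_{2,3}$ completes the clique interface between $K^1$ and $K^2$, and $\eta_{2,5}$ makes $K^1$ complete to $I^2$. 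After $\tau$ the labels of $\{z\}\cup K^1\cup K^2$ lie in $\{1,2,3\}$ and those of $I^1\cup I^2$ in $\{4,5\}$, so the convention is restored and $|\tau| = |\tau_1|+|\tau_2|+\mathcal{O}(1)$.

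The step that needs the most care is checking that this single $5$-expression reproduces $M[1,0]$ on the nose, and this is where I expect the only real obstacle. The essential point, and the reason five labels are genuinely needed here, is the asymmetry of $M[1,0]$: $K^1$ is complete to $I^2$ whereas $K^2$ is anticomplete to $I^1$, and $z$ sees $K^1,K^2,I^1$ but not $I^2$. This forces the four parts $K^1,K^2,I^1,I^2$ to receive four distinct labels, so that inserting $\eta_{2,5}$ (adding the edges $K^1$--$I^2$) while omitting $\eta_{3,4}$ (keeping $K^2$--$I^1$ empty) and $\eta_{1,5}$ can be enforced independently; the unrestricted ($\ast$) adjacencies $K^1$--$I^1$ and $K^2$--$I^2$ already reside inside $G_1$ and $G_2$ and are left untouched by the relabelings, which only rename labels and neither add nor delete edges. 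I would verify the remaining entries of $M[1,0]$ one by one and confirm that no $\eta$ creates a forbidden edge, using the fact that at the moment each $\eta$ is applied every label names exactly one part. Finally, as in \Cref{thm:cwd-clique-split}, turning the induction into an algorithm costs $\mathcal{O}(n^3)$ per decomposition step, so the running time obeys $T(n_1+n_2+1)\le T(n_1)+T(n_2)+\mathcal{O}(n^3)$, which solves to $T(n)=\mathcal{O}(n^4)$.
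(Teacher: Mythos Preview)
Your proposal is correct and follows exactly the route the paper intends: the paper states that \Cref{thm:cwd-independent-split} ``can be proved analogously'' to \Cref{thm:cwd-clique-split}, and you have carried out precisely that analogy, replacing the $M[0,1]$-partition from \Cref{thm:decomposition-clique-split-H-free} by the $M[1,0]$-partition from \Cref{thm:decomposition-independent-split-H-bar-free}, swapping the roles of $K$ and $I$ in the labeling convention, and verifying that the five required cross-block joins $\eta_{1,2},\eta_{1,3},\eta_{1,4},\eta_{2,3},\eta_{2,5}$ reproduce the adjacency pattern of $M[1,0]$ exactly. The extra $\eta$-operations compared to the $H$-free case (five instead of three) are expected, since $M[1,0]$ has more forced-adjacency entries than $M[0,1]$, and they only affect the additive constant in the bound $|\tau|\le |\tau_1|+|\tau_2|+\mathcal{O}(1)$.
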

\end{sloppypar}

\begin{theorem}\label{thm:cwd-bigraph}
The clique-width of every $2P_3$-free right-Sperner bigraph is at most $5$. Moreover, given a $2P_3$-free right-Sperner bigraph $G$, we can compute a $5$-expression of $G$ in time $\mathcal{O}(|V(G)|^4)$.
\end{theorem}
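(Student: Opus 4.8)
The plan is to mirror exactly the strategy used in the proof of \Cref{thm:cwd-clique-split}, since \Cref{thm:cwd-bigraph} is the bipartite analogue and the decomposition machinery (\Cref{thm:decomposition-2P3-free-right-Sperner-bigraphs}) is set up to parallel the split-graph case. First I would take a $2P_3$-free right-Sperner bigraph $G$ with bipartition $(A,B)$ witnessing the right-Sperner property, which we may compute in time $\mathcal{O}(n^3)$ via the algorithmic part of \Cref{thm:decomposition-2P3-free-right-Sperner-bigraphs}. The goal is to exhibit a $5$-expression, and the natural inductive invariant is that every vertex of $A$ receives a label from $\{1,2,3\}$ and every vertex of $B$ a label from $\{4,5\}$ (or some fixed three-and-two split of the five labels between the two sides of the bipartition).

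The inductive step would proceed by induction on $n=|V(G)|$. In the base case $B=\emptyset$ (or $A$ a single vertex) the graph is edgeless and a trivial expression suffices. Otherwise I would apply \Cref{thm:decomposition-2P3-free-right-Sperner-bigraphs} to obtain the $M[0,0]$-partition $\{\{z\},A^1,A^2,B^1,B^2\}$, together with the two smaller labeled bigraphs $(G_1,A^1,B^1)$ and $(G_2,A^2,B^2)$, each $2P_3$-free and right-Sperner. By the induction hypothesis each $G_i$ has a $5$-expression $\tau_i$ respecting the label invariant. I would then relabel the two sub-expressions so that their active labels become disjoint — following the split-graph proof, $\tau_1'=\rho_{1\to 2}(\rho_{3\to 2}(\rho_{5\to 4}(\tau_1)))$ and $\tau_2'=\rho_{1\to 3}(\rho_{2\to 3}(\rho_{4\to 5}(\tau_2)))$ — take a disjoint union with a fresh vertex $z$ labeled $1$, and add precisely the edges dictated by the $M[0,0]$ matrix via a short sequence of $\eta$-operations, followed by final relabelings to restore the invariant. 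The key point is that the off-diagonal structure of $M[0,0]$ prescribes exactly the same adjacency pattern between the label-classes as $M[0,1]$ does on the cross ($A$-to-$B$) entries — only the diagonal blocks differ (all $0$ rather than mixing a clique $K$), so the same constant-size gadget of $\eta$-operations realizes all required cross edges, while the absence of intra-$A$ and intra-$B$ edges is automatic since $G$ is bipartite.

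The main obstacle — really the only substantive check — is verifying that the sequence of relabelings and $\eta$-operations introduces \emph{exactly} the edges demanded by the $M[0,0]$-partition and no spurious ones: one must confirm that after the $\rho$ relabelings the vertices of $A^1$, $A^2$, $B^1$, $B^2$, and $\{z\}$ occupy distinct label classes so that each $\eta_{i,j}$ joins only the intended pair of classes, and that the cross-adjacencies forced to be $\ast$ in $M[0,0]$ are handled by the fact that they already lie inside $G_1$ or $G_2$ and are thus fixed by $\tau_1,\tau_2$. Because this is entirely parallel to the $M[0,1]$ bookkeeping already carried out in \Cref{thm:cwd-clique-split}, I would state it as a routine verification rather than reprove it in detail.

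Finally, the length and running-time bounds transfer verbatim: the expression satisfies $|\tau|=|\tau_1|+|\tau_2|+\mathcal{O}(1)$, giving a linear-length $5$-expression, and the recursion $T(n_1+n_2+1)\le T(n_1)+T(n_2)+\mathcal{O}((n_1+n_2+1)^3)$ solves to $T(n)=\mathcal{O}(n^4)$, exactly as in the split case. I would therefore present the proof as ``proved analogously to \Cref{thm:cwd-clique-split}, using \Cref{thm:decomposition-2P3-free-right-Sperner-bigraphs} in place of \Cref{thm:decomposition-clique-split-H-free},'' spelling out only the relabeling gadget adapted to $M[0,0]$.
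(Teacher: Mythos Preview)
Your proposal is correct and matches the paper's approach exactly: the paper itself gives no separate proof of \Cref{thm:cwd-bigraph} beyond the sentence ``can be proved analogously'' to \Cref{thm:cwd-clique-split}, and what you outline is precisely that analogy, carried out with the appropriate substitution of \Cref{thm:decomposition-2P3-free-right-Sperner-bigraphs} for \Cref{thm:decomposition-clique-split-H-free} and the $M[0,0]$ matrix for $M[0,1]$. The only cosmetic adjustment worth making explicit is that the $\eta_{4,5}$ operation from the split-graph gadget should be dropped (since $B^1$--$B^2$ carries no edges in the bipartite case), which you already implicitly acknowledge when noting that the intra-$B$ edges are absent.
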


\citet{MR1743732} showed that for any graph $G$, the clique-width of $G$ and its complement $\overline{G}$ are related by the following inequality
\begin{equation}\label{eq:complement}
\cw(\overline{G})\le 2\cw(G)\,.
\end{equation}
Using \Cref{thm:unbounded},
inequality~\eqref{eq:complement} implies that the class of $\overline{2P_3}$-free cobipartite graphs is of unbounded clique-width.
The same inequality combined with
\Cref{thm:cwd-bigraph} implies that the clique-width of every $\overline{2P_3}$-free cobipartite graph $G$ such that $\overline{G}$ is a right-Sperner bigraph is at most $10$.
In fact, the same approach as the one used to prove Theorems~\ref{thm:cwd-clique-split}, \ref{thm:cwd-independent-split}, and~\ref{thm:cwd-bigraph} can be naturally adapted also to the cobipartite setting, showing the following.

\begin{sloppypar}
\begin{theorem}\label{thm:cwd-cobigraph}
The clique-width of every $\overline{2P_3}$-free cobipartite graph $G$ whose complement is \hbox{right-Sperner} is at most $5$. Moreover, given such a graph $G$, we can compute a $5$-expression of $G$ in time $\mathcal{O}(|V(G)|^4)$.
\end{theorem}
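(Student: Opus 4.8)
The plan is to follow the inductive strategy used in the proof of \Cref{thm:cwd-clique-split}, replacing the decomposition of \Cref{thm:decomposition-clique-split-H-free} by that of \Cref{thm:decomposition-co-2P3-free-right-Sperner-cobigraphs} and the matrix $M[0,1]$ by $M[1,1]$. Set $n=|V(G)|$. First I would fix a bipartition $(A,B)$ of $\overline{G}$ witnessing that $\overline{G}$ is right-Sperner (computable in $\mathcal{O}(n^3)$ time by \Cref{thm:decomposition-co-2P3-free-right-Sperner-cobigraphs}), so that in $G$ both $A$ and $B$ are cliques. I would then prove, by induction on $n$, the stronger statement that $G$ admits a $5$-expression of length $\mathcal{O}(n)$ in which every vertex of $A$ carries a label from $\{1,2,3\}$ and every vertex of $B$ a label from $\{4,5\}$.

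For the base case $n\le 1$ the expression is a single creation operation $1(v)$ or $4(v)$ according to whether $v$ lies in $A$ or $B$. For $n\ge 2$, \Cref{thm:decomposition-co-2P3-free-right-Sperner-cobigraphs} yields, in $\mathcal{O}(n^3)$ time, a vertex $z\in A$ and an $M[1,1]$-partition $\{\{z\},A^1,A^2,B^1,B^2\}$ whose blocks $G_1=G[A^1\cup B^1]$ and $G_2=G[A^2\cup B^2]$ are again $\overline{2P_3}$-free cobipartite graphs with right-Sperner complements. Applying the induction hypothesis to $G_1$ and $G_2$ gives $5$-expressions $\tau_1,\tau_2$ obeying the labeling invariant (empty blocks are simply omitted from the construction below). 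I would then recolour each block so that its whole $A$-part and its whole $B$-part collapse to single labels, via
\[
\tau_1' = \rho_{1\to 2}(\rho_{3\to 2}(\rho_{5\to 4}(\tau_1)))\,,\qquad
\tau_2' = \rho_{1\to 3}(\rho_{2\to 3}(\rho_{4\to 5}(\tau_2)))\,,
\]
so that after relabelling $A^1,A^2,B^1,B^2$ carry the distinct labels $2,3,4,5$ while $z$ keeps label $1$, and finally assemble
\[
\tau = \eta_{1,2}(\eta_{1,3}(\eta_{1,4}(\eta_{2,3}(\eta_{2,5}(\eta_{4,5}(1(z)\oplus \tau_1'\oplus \tau_2'))))))\,.
\]

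The crux is to verify that $\tau$ realises exactly the adjacencies prescribed by $M[1,1]$. Since all relabellings are performed inside $\tau_1',\tau_2'$ before any join, at the moment the $\eta$-operations are applied the five parts $\{z\},A^1,A^2,B^1,B^2$ occupy the five distinct labels $1,2,3,4,5$, so each $\eta_{i,j}$ adds precisely the complete bipartite graph between two prescribed parts. The six joins $\eta_{1,2},\eta_{1,3},\eta_{2,3}$ (making $A=\{z\}\cup A^1\cup A^2$ a clique), $\eta_{4,5}$ (completing $B$ to a clique), and $\eta_{1,4},\eta_{2,5}$ account for all the off-diagonal $1$-entries of $M[1,1]$; the within-part cliques and the two $\ast$-blocks $A^1$--$B^1$ and $A^2$--$B^2$ are already correct inside $\tau_1,\tau_2$; and no operation touches the $0$-entries $z$--$B^2$ and $A^2$--$B^1$. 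One also checks that the resulting expression again satisfies the labeling invariant. This bookkeeping is the one step that has to be checked with care — it is the analogue of the verification in \Cref{thm:cwd-clique-split}, but with more joins, because here both sides $A$ and $B$ are cliques (the $a=b=1$ blocks of $M[1,1]$) rather than an independent set and a clique; I do not expect a genuine obstacle beyond this. Finally, the running time obeys the same recurrence $T(n)\le T(n_1)+T(n_2)+\mathcal{O}(n^3)$ as in \Cref{thm:cwd-clique-split}, giving $T(n)=\mathcal{O}(n^4)$, while each recursion level contributes only a constant number of symbols, so $|\tau|=\mathcal{O}(n)$.
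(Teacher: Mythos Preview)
Your proposal is correct and carries out precisely the adaptation the paper alludes to when it says the approach of \Cref{thm:cwd-clique-split} ``can be naturally adapted also to the cobipartite setting'': you replace \Cref{thm:decomposition-clique-split-H-free} by \Cref{thm:decomposition-co-2P3-free-right-Sperner-cobigraphs}, keep the same labeling invariant ($A\to\{1,2,3\}$, $B\to\{4,5\}$), and add the three extra $\eta$-operations needed because the $a$-block of $M[1,1]$ is $1$ rather than $0$. Your verification that the six joins realise exactly the $1$-entries of $M[1,1]$ while leaving the two $0$-entries untouched is correct, and the recurrence for both $|\tau|$ and the running time is identical to the one in \Cref{thm:cwd-clique-split}.
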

\end{sloppypar}

\section{Consequences for domination problems}\label{sec:domination}

Theorem~\ref{thm:cwd-clique-split} is interesting not only in contrast with Theorem~\ref{thm:unbounded} but also has algorithmic consequences. As a corollary of Theorem~\ref{thm:cwd-clique-split}, we obtain new polynomially solvable cases of three basic variants of the dominating set problem,
the {\sc Dominating Set}, {\sc Total Dominating Set} and {\sc Connected Dominating Set} problems. All these problems can be formulated in Monadic Secod Order Logic with quantifiers over vertices and vertex subsets. Therefore, by the meta-theorem of Courcelle et al.~\cite{MR1739644}, the problem can be solved in time $\mathcal{O}(|\tau|)$ on graphs given by a $k$-expression $\tau$ for any constant value of $k$. In particular, Theorem~\ref{thm:cwd-clique-split} implies that
the {\sc Dominating Set}, {\sc Total Dominating Set}, and {\sc Connected Dominating Set} problems can be solved in time
$\mathcal{O}(|V(G)|^4)$ for $H$-free clique-Sperner split graphs.
Furthermore, a cubic-time algorithm can be obtained by using a result of Oum~\cite{MR2479181} stating that there exists a function $f$ such that for every constant $k$, given a graph $G$ of clique-width at most $k$, we can compute in time $\mathcal{O}(|V(G)|^3)$ an $(8^k-1)$-expression of $G$.

\begin{sloppypar}
\begin{corollary}\label{lem:cwd}
The {\sc Dominating Set}, {\sc Total Dominating Set} and {\sc Connected Dominating Set} problems can be solved in time
$\mathcal{O}(|V(G)|^3)$ for $H$-free clique-Sperner split graphs.
\end{corollary}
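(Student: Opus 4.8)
The plan is to combine the bounded-clique-width result of \Cref{thm:cwd-clique-split} with two general algorithmic meta-theorems. First I would observe that each of the three problems is the minimization version of a property definable in monadic second-order logic with quantification over vertices and vertex subsets ($\mathrm{MSO}_1$): a set $D$ is dominating if every vertex either lies in $D$ or has a neighbour in $D$; it is total dominating if every vertex has a neighbour in $D$; and it is connected dominating if, in addition, the induced subgraph $G[D]$ is connected, where connectivity of $G[D]$ is expressible in $\mathrm{MSO}_1$ by forbidding a partition of $D$ into two nonempty parts with no edge between them. Consequently all three fit the optimization (LinEMSOL) framework of Courcelle, Makowsky and Rotics~\cite{MR1739644}, which guarantees that, for each fixed $k$, an optimal solution can be computed in time $\mathcal{O}(|\tau|)$ from any given $k$-expression $\tau$ of the input graph.

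Next I would supply a bounded-width expression cheaply. By \Cref{thm:cwd-clique-split}, every $H$-free clique-Sperner split graph $G$ has clique-width at most $5$; computing a $5$-expression directly via that theorem costs $\mathcal{O}(n^4)$, where $n = |V(G)|$, so to obtain the sharper cubic bound I would instead invoke Oum's result~\cite{MR2479181}. Since $\cw(G)\le 5$, in time $\mathcal{O}(n^3)$ one obtains an $(8^5-1)$-expression $\tau$ of $G$. As $8^5-1$ is a constant, applying the meta-theorem of~\cite{MR1739644} to $\tau$ solves each of the three problems in time $\mathcal{O}(|\tau|)$. Because the length $|\tau|$ of the produced expression is bounded by the time spent producing it, we have $|\tau| = \mathcal{O}(n^3)$, and hence the overall running time is $\mathcal{O}(n^3)$, as claimed.

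The routine part is merely assembling the two cited black boxes; the points that require care all lie in verifying that the problems genuinely fall under these theorems. The main one is expressing connectivity of $G[D]$ purely in $\mathrm{MSO}_1$ (over vertex subsets, not edge subsets), which is essential because clique-width supports only $\mathrm{MSO}_1$; this is standard but worth making explicit. I would also note that we use the optimization rather than the decision form of Courcelle's theorem, and that a total, resp.~connected, dominating set exists only when $G$ has no isolated vertex, resp.~is connected, so the algorithm should report infeasibility in these degenerate cases. No deeper obstacle arises, since \Cref{thm:cwd-clique-split} already delivers the bounded clique-width on which the whole argument rests.
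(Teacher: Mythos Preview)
Your proposal is correct and follows essentially the same route as the paper: both invoke \Cref{thm:cwd-clique-split} to bound the clique-width by $5$, use Oum's algorithm~\cite{MR2479181} to obtain an $(8^5-1)$-expression in $\mathcal{O}(n^3)$ time, and then apply the Courcelle--Makowsky--Rotics meta-theorem~\cite{MR1739644} to solve each of the three $\mathrm{MSO}_1$-expressible problems in linear time in the size of the expression. Your additional remarks on expressing connectivity in $\mathrm{MSO}_1$ and on the degenerate infeasible cases are reasonable clarifications that the paper leaves implicit.
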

\end{sloppypar}

The next lemma shows that the three domination problems are closely interrelated in the class of split graphs. This fact that will be instrumental for proving \Cref{cor:dom-sets} and \Cref{thm:dom-sets}.

\begin{lemma}\label{lem:reductions}
The {\sc Dominating Set}, {\sc Total Dominating Set}, and {\sc Connected Dominating Set} problems are linear-time equivalent in any class of connected split graphs.
\end{lemma}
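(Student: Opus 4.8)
The plan is to show that on a connected split graph $G$ with $|V(G)|\ge 2$ the three parameters $\gamma(G)$, $\gamma_t(G)$, $\gamma_c(G)$ coincide up to an additive constant and that optimal solutions to the three problems can be converted into one another in linear time \emph{on the same graph}, so the reductions never leave the class. First I would compute a split partition $(K,I)$ in linear time~\cite{MR637832}; since $G$ is connected with at least two vertices it has an edge, hence no isolated vertex, so a total dominating set exists and all three parameters are well defined. I will use throughout that a nonempty set $S\subseteq K$ dominates $G$ if and only if $S$ \emph{covers} $I$ (every vertex of $I$ has a neighbor in $S$), because a nonempty subset of the clique $K$ automatically dominates $K$.

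The key structural step is a \emph{pushing lemma}: every minimum dominating set can be transformed, in linear time and without changing its size, into one contained in $K$. Given a minimum dominating set $D$ and a vertex $u\in D\cap I$, pick a neighbor $w\in N(u)\cap K$ (which exists since $u$ is not isolated). Minimality of $D$ forces $w\notin D$, since otherwise $w$ would already dominate $u$ and all of $N(u)\cap K$, making $u$ redundant. Hence $D'=(D\setminus\{u\})\cup\{w\}$ is again a dominating set of the same size with one fewer vertex in $I$, and iterating yields a minimum dominating set $S\subseteq K$. As $S\subseteq K$ and $K$ is a clique, $G[S]$ is connected, so $S$ is actually a connected dominating set. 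Together with the trivial inequality $\gamma_c(G)\ge\gamma(G)$ (every connected dominating set is dominating), this gives $\gamma_c(G)=\gamma(G)$, with the single set $S$ witnessing optimality for both problems.

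For total domination I would argue as follows. Since $I$ is independent, every vertex of $I$ has all its neighbors in $K$, so for any total dominating set $D_t$ the set $D_t\cap K$ must cover $I$, whence $|D_t|\ge|D_t\cap K|\ge\gamma(G)$ and $\gamma_t(G)\ge\gamma(G)$. Conversely, take the set $S\subseteq K$ from the pushing lemma. If $|S|\ge 2$, then every vertex of $K$ has a neighbor in $S$ (as $K$ is a clique and $|S|\ge 2$) and every vertex of $I$ is covered by $S$, so $S$ is already a total dominating set and $\gamma_t(G)=\gamma(G)$. If $|S|=1$, then $\gamma(G)=1$, the unique vertex of $S$ is universal, no single vertex totally dominates itself, and adding any one neighbor gives a total dominating set of size $2$, so $\gamma_t(G)=\gamma(G)+1=2$. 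The case $\gamma(G)=1$ is detectable in linear time by testing for a universal vertex, so from a solution to any one problem the optimum and an optimal solution of each other problem can be recovered in linear time on $G$ itself.

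The main obstacle is not a single hard inequality but the bookkeeping needed to make the equivalence genuine and class-preserving: the reductions must run on the \emph{same} graph $G$, so that they stay inside an arbitrary fixed class of connected split graphs (exactly what is required to transfer the inapproximability results cited in the introduction), and the lone additive discrepancy $\gamma_t(G)=\gamma(G)+1$ in the case $\gamma(G)=1$ must be isolated and shown to be the \emph{only} deviation. I would finish by listing the six directed reductions and checking that each is linear time: $\gamma(G)=\gamma_c(G)$ makes the \textsc{Dominating Set} and \textsc{Connected Dominating Set} instances interchangeable via the pushing lemma, while the \textsc{Total Dominating Set} reductions in both directions combine the universal-vertex test with the relation $\gamma_t(G)\in\{\gamma(G),\gamma(G)+1\}$. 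Degenerate configurations ($K=\emptyset$, $I=\emptyset$, or a single vertex) should be dispatched first, as they make the three parameters trivially equal.
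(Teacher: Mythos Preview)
Your proposal is correct and follows essentially the same route as the paper's proof: push a minimum dominating set into the clique side $K$ (the paper does this in one line, you state it as a separate ``pushing lemma''), then use that any nonempty subset of $K$ is connected and, when of size at least two, totally dominating; the $\gamma(G)=1$ case for total domination is handled identically by adding a single neighbor. Your extra bookkeeping (emphasising that all reductions stay on the same graph, routing $\gamma_t(G)\ge\gamma(G)$ through $D_t\cap K$ rather than the trivial inclusion) is sound but not needed for the argument.
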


\begin{proof}
Throughout this proof, let $\mathcal{G}$ be a class of connected split graphs and let $G = (V,E)$ be a split graph from $\mathcal{G}$, with a split partition $(K,I)$. We may assume without loss of generality that $G$ has at least two vertices.

Suppose first that the {\sc Dominating Set} problem can be solved in polynomial time for graphs in $\mathcal{G}$. Since every vertex in $I$ has a neighbor in $K$, graph $G$ has a minimum dominating set $D^*$ such that $D^*\subseteq K$ (every dominating set $D$ of $G$ with $v\in I\cap D$ can be transformed into a dominating set $D'$ such that $|D'|\le |D|$ by replacing $v$ with an arbitrary neighbor of it). Such a set $D^*$ can be computed in polynomial time. Consider the {\sc Total Dominating Set} problem. Since every total dominating set is a dominating set, we have $\gamma(G)\le \gamma_t(G)$. If $\gamma(G) = 1$, then $\gamma_t(G) = 2$ and any set of the form $\{u,v\}$ where $\{u\}$ is a dominating set in $G$ and $uv\in E(G)$ is a minimum total dominating set in $G$. If $\gamma(G) \ge 2$, then, note that since $|D^*|\ge 2$ and $D^*$ is a clique, $D^*$ is also a total dominating set in $G$. Hence
$\gamma_t(G)\le |D^*|\le \gamma(G)\le \gamma_t(G)$, hence in this case $D^*$ is a minimum total dominating set. The argument for the {\sc Connected Dominating Set} problem is even simpler. Since every connected dominating set is a dominating set, we have $\gamma(G)\le \gamma_c(G)$. Moreover, since $D^*$ is a connected dominating set, equality holds and $D^*$ is a minimum connected dominating set.

Suppose that the {\sc Total Dominating Set} problem can be solved in polynomial time for graphs in $\mathcal{G}$. We will show that the {\sc Dominating Set} problem is also polynomially solvable for graphs in $\mathcal{G}$. We may assume that $\gamma(G)\ge 2$ (the case $\gamma(G) = 1$ can be identified in polynomial time). Let $D^*$ be a minimum total dominating set in $G$. Clearly, $D^*$ is a dominating set. Moreover, $D^*$ is also a minimum dominating set: if $G$ had a dominating set $D'\subseteq K$ such that $|D'|<|D^*|$, then, since $|D'|\ge \gamma(G)\ge 2$ and $D'$ is a clique, $D'$ would be a total dominating set of $G$ smaller than $D^*$, a contradiction. The argument is similar if the {\sc Connected Dominating Set} problem is polynomially solvable for graphs in $\mathcal{G}$. Assuming that $\gamma(G)\ge 2$, every minimum connected dominating set $D^*$ in $G$ is also a minimum dominating set: if $G$ had a dominating set $D'\subseteq K$ such that $|D'|<|D^*|$, then, since $D'$ would be a connected dominating set of $G$ smaller than $D^*$, a contradiction.
\end{proof}

The following result is a consequence of a more general result~\cite[Corollary 6.7]{CM-ISAIM2014}.

\begin{theorem}[\citet{CM-ISAIM2014}]\label{thm:CM-dom-sets}
The {\sc Connected Dominating Set} problem is solvable in time $\mathcal{O}(|V(G)|^3)$ in the class of $\overline H$-free split graphs.
\end{theorem}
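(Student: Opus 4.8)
The plan is to reduce the {\sc Connected Dominating Set} problem on a $\overline H$-free split graph to a minimum transversal problem on a dually Sperner hypergraph, and then to solve the latter using the recursive decomposition of $1$-Sperner hypergraphs (Theorem~\ref{thm:decomposition}). While this statement is obtained in~\cite{CM-ISAIM2014} as a special case of a more general result, the argument I outline is self-contained and uses only the machinery developed here. Throughout, set $n=|V(G)|$; since a graph admits a connected dominating set only when it is connected, I may assume that $G$ is connected and, handling $n\le 1$ trivially, that $G$ has at least two vertices.

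First I would fix a split partition $(K,I)$ of $G$, computable in linear time. Connectivity forces $K\neq\emptyset$ and $N(v)\neq\emptyset$ for every $v\in I$. The structural reduction I rely on is that $G$ has a minimum connected dominating set contained in $K$: starting from any minimum connected dominating set and replacing each of its vertices lying in $I$ by an arbitrary neighbor in $K$ preserves domination (a vertex of $I$ dominates only itself and vertices of $K$, all of which are dominated by any single vertex of the clique $K$) and does not increase the size, while the resulting subset of $K$ is trivially connected. Hence, apart from the trivial case $I=\emptyset$ (where $\gamma_c(G)=1$), the value $\gamma_c(G)$ equals the minimum size of a set $D\subseteq K$ with $D\cap N(v)\neq\emptyset$ for all $v\in I$, that is, a minimum transversal of the hypergraph $\mathcal H=(K,\{N(v)\mid v\in I\})$. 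Since $G$ is $\overline H$-free, $\mathcal H$ is dually Sperner: two distinct hyperedges $N(u),N(v)$ each having at least two private elements in $K$ would, together with $u$ and $v$, induce a copy of $\overline H$ in $G$ (this is exactly the content of Observation~\ref{obs:dually-Sperner}). Replacing $\mathcal H$ by its family of inclusion-minimal hyperedges, in time $\mathcal O(n^3)$, produces a hypergraph $\mathcal H'$ with the same transversals that is both Sperner and dually Sperner, hence $1$-Sperner.

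It then remains to compute a minimum transversal of the $1$-Sperner hypergraph $\mathcal H'$, and this is the step I expect to be the main obstacle. I would proceed recursively along the decomposition $\mathcal H'=\mathcal H_1\odot\mathcal H_2$ guaranteed by Theorem~\ref{thm:decomposition}, with glue vertex $z$ and vertex set $\{z\}\cup V_1\cup V_2$. A transversal either contains $z$ --- in which case every hyperedge $\{z\}\cup e$ with $e\in E_1$ is already met, and it remains, at cost one, to pierce every hyperedge $V_1\cup f$ with $f\in E_2$, which is achieved either by a single vertex of $V_1$ or by a minimum transversal of $\mathcal H_2$ --- or it avoids $z$, in which case its trace on $V_1$ must be a transversal of $\mathcal H_1$, and this trace, being nonempty whenever $E_1\neq\emptyset$, then meets all hyperedges $V_1\cup f$ as well. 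Taking the cheaper of the two options, together with a routine treatment of the boundary cases $E_1=\emptyset$ or $E_2=\emptyset$, expresses a minimum transversal of $\mathcal H'$ in terms of minimum transversals of $\mathcal H_1$ and $\mathcal H_2$; unfolding the recursion also yields an explicit minimum transversal, hence an explicit minimum connected dominating set of $G$. The real difficulty is to organize this recursion together with the repeated computation of decompositions so that the total running time stays within $\mathcal O(n^3)$; the remaining ingredients (the reduction to $K$, the identification of $\mathcal H$ as dually Sperner, and the passage to inclusion-minimal hyperedges) are routine and comfortably within that bound.
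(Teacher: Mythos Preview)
The paper does not prove this theorem at all: it is quoted from~\cite{CM-ISAIM2014}, where it falls out of the general theory of connected-domishold graphs via the cutset hypergraph (cf.~Theorem~\ref{thm:connected-domishold}). Your proposal is therefore not a rederivation of the paper's argument but a genuinely different, self-contained route that stays entirely within the machinery of the present paper. Instead of cutsets, you use the neighborhood hypergraph $\mathcal H=(K,\{N(v)\mid v\in I\})$; Observation~\ref{obs:dually-Sperner} then turns $\overline H$-freeness of $G$ directly into dual Spernerness of $\mathcal H$, and after passing to minimal hyperedges the decomposition of Theorem~\ref{thm:decomposition} drives a clean recursion for the minimum transversal. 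This is arguably more elementary than invoking the full connected-domishold framework, and it has the pleasant feature of exhibiting the $\overline H$-free split case as a direct application of the $1$-Sperner structure theorem.

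Two points deserve a little more care. First, the ``avoid $z$'' branch of your recursion uses that a transversal of $\mathcal H_1$ is nonempty; this fails precisely when $\emptyset\in E_1$, i.e., when $\{z\}$ itself is a hyperedge of $\mathcal H'$, in which case every transversal is forced to contain $z$. This is easily absorbed into the boundary analysis, but should be stated. Second, the running time: a recursion of the shape $T(n_1+n_2+1)\le T(n_1)+T(n_2)+\mathcal O((n_1+n_2+1)^3)$ only yields $\mathcal O(n^4)$, exactly as in the proof of Theorem~\ref{thm:cwd-clique-split}. To get $\mathcal O(n^3)$ you must argue that a single decomposition step on a $1$-Sperner hypergraph with $p$ vertices and $q$ hyperedges can be carried out in time $\mathcal O(pq)$ (or otherwise subcubic in a way that telescopes over the tree), so that the total work is bounded by the initial incidence size times the depth. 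This is feasible---for instance, any vertex of maximum hyperedge-degree can serve as the glue vertex, and the split of the incidence matrix is then linear in its size---but it is the one place where you cannot simply reuse the $\mathcal O(n^3)$-per-step bound from Theorem~\ref{thm:decomposition-clique-split-H-free}.
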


Lemma~\ref{lem:reductions} and Theorem~\ref{thm:CM-dom-sets} imply the following.

\begin{sloppypar}
\begin{corollary}\label{cor:dom-sets}
The {\sc Dominating Set} and {\sc Total Dominating Set} problems are solvable in time $\mathcal{O}(|V(G)|^3)$ in the class of $\overline H$-free split graphs.
\end{corollary}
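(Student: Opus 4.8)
The plan is to derive the result by combining Theorem~\ref{thm:CM-dom-sets}, which solves {\sc Connected Dominating Set} in cubic time on $\overline H$-free split graphs, with the linear-time equivalences supplied by Lemma~\ref{lem:reductions}. The one subtlety to keep in mind is that Lemma~\ref{lem:reductions} is stated for classes of \emph{connected} split graphs, whereas the corollary concerns $\overline H$-free split graphs that need not be connected; indeed, {\sc Connected Dominating Set}, the problem we can solve directly, is only meaningful on connected graphs. Handling this discrepancy is the crux of the argument.

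First I would treat the connected case. Let $\mathcal{G}$ be the class of all connected $\overline H$-free split graphs. By Theorem~\ref{thm:CM-dom-sets}, {\sc Connected Dominating Set} is solvable in time $\mathcal{O}(|V(G)|^3)$ for graphs in $\mathcal{G}$. Since $\mathcal{G}$ is a class of connected split graphs, Lemma~\ref{lem:reductions} applies and shows that {\sc Dominating Set} and {\sc Total Dominating Set} are linear-time equivalent to {\sc Connected Dominating Set} on $\mathcal{G}$. Consequently, both problems are solvable in time $\mathcal{O}(|V(G)|^3)$ on connected $\overline H$-free split graphs, the one-vertex graph $K_1$ being a trivial separate case.

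It remains to lift this to arbitrary $\overline H$-free split graphs by decomposing into connected components. If $G$ is an $\overline H$-free split graph with connected components $G_1,\ldots,G_\ell$, then each $G_i$ is an induced subgraph of $G$ and hence, both properties being hereditary, is again an $\overline H$-free split graph, now connected; so the connected case applies to each $G_i$. For {\sc Dominating Set} one has $\gamma(G)=\sum_{i=1}^\ell \gamma(G_i)$, so it suffices to solve the problem on each component and sum the optima. For {\sc Total Dominating Set}, if some $G_i$ is a single vertex (equivalently, $G$ has an isolated vertex) then $G$ has no total dominating set; otherwise every component has at least two vertices and admits a total dominating set, and $\gamma_t(G)=\sum_{i=1}^\ell \gamma_t(G_i)$, so again a component-wise computation suffices. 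Writing $n_i=|V(G_i)|$ and $n=\sum_i n_i$, the total running time is $\sum_{i=1}^\ell \mathcal{O}(n_i^3)\le \mathcal{O}\bigl((\textstyle\sum_i n_i)^3\bigr)=\mathcal{O}(n^3)$, with the linear-time component decomposition absorbed into this bound. The main obstacle, as noted above, is precisely this passage from connected to disconnected inputs, which the component decomposition resolves cleanly.
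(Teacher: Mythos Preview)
Your proof is correct and follows the same approach as the paper, which simply states that the corollary is implied by Lemma~\ref{lem:reductions} and Theorem~\ref{thm:CM-dom-sets} without further justification. Your extra care in reducing to connected components is appropriate (and mirrors what the paper itself does explicitly in the proof of Theorem~\ref{thm:dom-sets}); you could even simplify it slightly by noting that a split graph, being $2K_2$-free, has at most one nontrivial connected component, so the component decomposition is especially easy here.
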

\end{sloppypar}

We now complement the results of Theorem~\ref{thm:CM-dom-sets} and Corollary~\ref{cor:dom-sets} as follows.

\begin{theorem}\label{thm:dom-sets}
The {\sc Dominating Set}, {\sc Total Dominating Set}, and {\sc Connected Dominating Set} problems are solvable in time $\mathcal{O}(|V(G)|^3)$
in the class of $H$-free split graphs.
\end{theorem}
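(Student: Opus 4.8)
The plan is to reduce the general $H$-free split graph case to the $H$-free \emph{clique-Sperner} split graph case already handled by Corollary~\ref{lem:cwd}, by showing that the ``bad'' vertices obstructing the clique-Sperner property can be dealt with separately and cheaply. Concretely, given an $H$-free split graph $G$ with split partition $(K,I)$, two vertices $u,v\in K$ with $N(u)\cap I\subseteq N(v)\cap I$ are exactly the pairs that violate the clique-Sperner condition. First I would observe that if $G$ is \emph{connected}, Lemma~\ref{lem:reductions} tells us all three domination problems are linear-time equivalent, so it suffices to solve one of them; and on disconnected graphs we may treat each component separately (a dominating set is the union of dominating sets of the components, and for total/connected domination the relevant component structure is easily handled). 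So I would assume $G$ connected and focus on a single domination problem.

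The key structural step is to reduce to the clique-Sperner case. The idea is that when $N(u)\cap I\subseteq N(v)\cap I$ for $u,v\in K$, vertex $u$ is ``dominated'' (in the sense relevant to building solutions) by $v$ together with the clique $K$, so one of the two vertices is redundant for domination purposes and can be removed or collapsed without changing the optimum. More precisely, I would define an equivalence / preorder on $K$ by $u\preceq v$ iff $N(u)\cap I\subseteq N(v)\cap I$, and argue that we may delete from $K$ any vertex that is strictly dominated, or merge vertices with equal neighborhoods in $I$, recording the multiplicity. Because $K$ is a clique, deleting such a vertex $u$ changes neither the domination status of $I$ (every neighbor of $u$ in $I$ is also a neighbor of $v$) nor the connectivity structure in a way that affects the minimum dominating set size; one must only verify that an optimal solution in the reduced graph lifts back to an optimal solution in $G$ of the same size. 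After exhaustively applying this reduction we obtain an induced subgraph $G'$ of $G$ (still $H$-free, as $H$-freeness is hereditary) whose split partition is clique-Sperner, and the optima for $G$ and $G'$ agree.

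With $G'$ being $H$-free and clique-Sperner, Corollary~\ref{lem:cwd} solves all three domination problems on $G'$ in time $\mathcal{O}(|V(G')|^3)\le\mathcal{O}(|V(G)|^3)$. The running time of the reduction itself is easily within $\mathcal{O}(|V(G)|^3)$: computing a split partition takes $\mathcal{O}(n+m)$ by \cite{MR637832} (using Lemma~\ref{lem:split} when beneficial), computing and comparing the neighborhoods $N(u)\cap I$ for all pairs $u,v\in K$ takes $\mathcal{O}(n^3)$, and lifting an optimal solution back is linear. Composing the reduction with Corollary~\ref{lem:cwd} therefore yields an $\mathcal{O}(|V(G)|^3)$ algorithm overall, and the equivalence in Lemma~\ref{lem:reductions} propagates the bound to all three problems.

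I expect the main obstacle to be the correctness of the reduction, i.e.\ proving rigorously that deleting a $\preceq$-dominated clique vertex preserves the minimum size of each of the three kinds of dominating set, rather than just of an ordinary dominating set. For ordinary domination the argument is clean, since any solution using $u$ can swap $u$ for $v$. For \emph{total} and \emph{connected} domination the subtlety is that $u$ might be used to dominate $v$ itself, or to maintain connectivity, so the swap must be checked more carefully; here I would lean on the fact that $K$ is a clique (so any two retained clique vertices already dominate each other and are mutually adjacent) and on Lemma~\ref{lem:reductions}, which lets me do the careful argument only once, for a single variant, and then transfer it. Verifying that the lift preserves exactly the optimum in all three regimes is the delicate point; everything else is bookkeeping.
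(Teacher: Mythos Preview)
Your proposal is correct and follows essentially the same approach as the paper: reduce to connected components, invoke Lemma~\ref{lem:reductions} to focus on {\sc Dominating Set} alone, iteratively delete any $u\in K$ with $N(u)\cap I\subseteq N(v)\cap I$ for some $v\in K$ (showing $\gamma(G)=\gamma(G-u)$ via the swap $u\mapsto v$), and then apply Corollary~\ref{lem:cwd} to the resulting $H$-free clique-Sperner split graph. The only superfluous detail is ``recording the multiplicity'' when merging equal-neighborhood clique vertices; since the problems are unweighted you can simply delete all but one representative, exactly as the paper does.
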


\begin{proof}
Let $G = (V,E)$ be an $H$-free split graph, with a split partition $(K,I)$.
Let $n = |V(G)|$ and $m = |E(G)|$. We may assume without loss of generality that $G$ is connected and has at least two vertices. By Lemma~\ref{lem:reductions}, it suffices to show that {\sc Dominating Set} is solvable in time $\mathcal{O}(n^3)$ for $G$. Since $G$ is connected and has at least two vertices, every vertex in $I$ has a neighbor in $K$. This implies that $G$ has a minimum dominating set $D$ such that $D\subseteq K$.

Let us now show that we may assume that $G$ is clique-Sperner. Suppose that this is not the case. Then, there exists a pair $u,v\in K$ of distinct vertices such that $N(u)\cap I \subseteq N(v)\cap  I$. We claim that in this case the problem can be reduced to the graph $G-u$, in the sense that
$\gamma(G)= \gamma(G-u)$. Note that since $G$ is connected, $G-u$ is also connected.

To see the claimed equality, note first that every minimum dominating set of $G-u$ contained in $K\setminus\{u\}$ is a minimum dominating set of $G$.
Indeed, if $D$ is a minimum dominating set of $G-u$, then $u$ has a neighbor in $D$ since $D\subseteq K$ and therefore $D$ is a dominating set of $G$.
This shows $\gamma(G)\le \gamma(G-u)$. Secondly, if $D$ is a minimum dominating set of $G$, then either $u\not\in D$, in which case $D$ is a dominating set of $G-u$, or $u\in D$, in which case $(D\setminus \{u\})\cup\{v\}$ is a dominating set in $G-u$. This shows that $\gamma(G-u)\le \gamma(G)$ and establishes the correctness of the reduction.

Therefore, we may assume that $G$ is an $H$-free clique-Sperner split graph
and we can invoke \Cref{lem:cwd} to conclude that the dominating set problem is solvable in time $\mathcal{O}(n^3)$ in the class of $H$-free split graphs.

The time complexity can be justified as follows. The reduction to the case of connected graphs can be performed in time $\mathcal{O}(n+m)$, by considering each connected component separately. A split partition $(K,I)$ can be computed in time $\mathcal{O}(n+m)$~\cite{MR637832}. Then, in time $\mathcal{O}(n^3)$, graph $G$ can be reduced to an equivalent
clique-Sperner induced subgraph $G'$ of $G$.
This can be done as follows. First we compute the equivalence classes of the relation on $K$ defined by $u\sim v$ if and only if $N(u)\cap I = N(v)\cap I$. Then we delete vertices from $K$ so that only one representative of each class remains. Call the resulting set $K^1$. Then we compute the set $K^2$ of vertices $u\in K^1$ such that there exists a vertex $v\in K^1$ such that $N(u)\cap I \subset N(v)\cap I$. The graph $G'$ is then $G[(K^1\setminus K^2 )\cup I]$. Since $G$ is $H$-free, so is $G'$. Moreover, $G'$ is clique-Sperner. By \Cref{lem:cwd}, a minimum dominating set in $G'$ can be computed in time $\mathcal{O}(n^3)$.
\end{proof}

We conclude by observing that Theorem~\ref{thm:CM-dom-sets}, \Cref{cor:dom-sets}, and \Cref{thm:dom-sets} are sharp, in the sense that all three domination problems become \NP-hard if any of the assumptions is dropped.
Indeed, all the three problems are \NP-hard in the class of split graphs~\cite{LP83,MR761623,MR754426} (and even hard to approximate, see Introduction), in the class of line graphs~\cite{MR579424,McRae,MR2874128}, and consequently in the class of claw-free graphs. Since every claw-free graph is $H$-free, the problems are also \NP-hard in the class of $H$-free graphs. Similarly, since the
problems are known to be \NP-hard in the class of bipartite graphs~\cite{D81,PLH83}, and every bipartite graph is triangle-free and thus $\overline{H}$-free, the problems are also \NP-hard in the class of $\overline{H}$-free graphs.

\subsection*{Acknowledgements}

\begin{sloppypar}
The second author was partially funded by the Russian Academic Excellence Project `5-100'. The work for this paper was done in the framework of bilateral projects between Slovenia and the USA, partially financed by the Slovenian Research Agency (BI-US/$14$--$15$--$050$, BI-US/$16$--$17$--$030$, and BI-US/$18$--$19$--$029$). The work of the third author is supported in part by the Slovenian Research Agency (I$0$-$0035$, research program P$1$-$0285$, research projects N$1$-$0032$, J$1$-$6720$, and J$1$-$7051$).
\end{sloppypar}

\def\ocirc#1{\ifmmode\setbox0=\hbox{$#1$}\dimen0=\ht0 \advance\dimen0
  by1pt\rlap{\hbox to\wd0{\hss\raise\dimen0
  \hbox{\hskip.2em$\scriptscriptstyle\circ$}\hss}}#1\else {\accent"17 #1}\fi}

\newpage
\section*{Appendix: Proofs of Theorems~\ref{thm:decomposition-2P3-free-right-Sperner-bigraphs}
and~\ref{thm:decomposition-co-2P3-free-right-Sperner-cobigraphs}}

\begin{sloppypar}
\begin{thm-bigraph}[restated]
Let $G$ be a $2P_3$-free right-Sperner bigraph with at least two vertices and let $(A,B)$ be a bipartition of $G$ such that the labeled bigraph $(G,A,B)$ is right-Sperner. Then, there exists a partition of $A$ as $A = \{z\}\cup A^1\cup A^2$ and a partition of $B$ as $B = B^1\cup B^2$
such that $\{\{z\},A^1,A^2,B^1,B^2\}$ is an
$M[0,0]$-partition of $G$ (where the rows and the columns are indexed in order by $\{z\},A^1,A^2,B^1,B^2$) such that
the labeled bipartite subgraphs $(G_1,A^1,B^1)$ and $(G_2,A^2,B^2)$ of $(G,A,B)$ induced by the sets $A^1\cup B^1$ and $A^2\cup B^2$, respectively (with inherited bipartitions), are $2P_3$-free and right-Sperner.

Moreover, given an $2P_3$-free bigraph $G$, we can compute in time $\mathcal{O}(|V(G)|^3)$ a bipartition $(A,B)$ as above, along with an $M[0,0]$-partition with the stated properties, or determine that $G$ is not right-Sperner.
\end{thm-bigraph}
\end{sloppypar}

\begin{proof}
For brevity, let us set $n = |V(G)|$ and $m = |E(G)|$.
If $A = \emptyset$, then $V(G) = B$ and since $(G,A,B)$ is right-Sperner, we infer that $n = |B| = 1$, a contradiction to $n>1$.
So we have $A \neq \emptyset$. Consider the hypergraph $\mathcal{H} = (V_{\mathcal{H}},E_{\mathcal{H}})$ with vertex set $V_{\mathcal{H}} = A$, edge set $E_{\mathcal{H}} = \{e_v\mid v\in B\}$
where $e_v = N(v)$, that is, $e_v = \{u\in V_{\mathcal{H}}\mid uv\in E(G)\}$.

We claim that $\mathcal{H}$ is $1$-Sperner, that is, that every two distinct hyperedges $e$ and $f$ of $\mathcal{H}$ satisfy $\min\{|e\setminus f|,|f\setminus e|\} = 1$. Let $e,f\in E_{\mathcal{H}}$ be two distinct edges of $\mathcal{H}$. Then $e = e_v$ and $f = e_{v'}$ for some two vertices $v,v'\in K$. First we show that none of the edges $e$ and $f$ is contained in the other one. If $e\subseteq f$, then $N(v)= e_v\subseteq e_{v'} = N(v')$, which implies $v = v'$ since $(G,A,B)$ is right-Sperner, hence $e = f$, a contradiction. By symmetry, $f\nsubseteq e$ and so $\min\{|e\setminus f|,|f\setminus e|\} \ge 1$, as claimed.

Next, suppose that $\min\{|e\setminus f|,|f\setminus e|\} \ge 2$.
Denoting by $x,y$ and $x',y'$ two pairs of distinct vertices of $\mathcal{H}$ such that $\{x,y\}\subseteq e_v\setminus e_{v'}$ and $\{x',y'\}\subseteq e_{v'}\setminus e_{v}$, we see that the subgraph of $G$ induced by $\{v,x,y,v',x',y'\}$ is isomorphic to $2P_3$, contradicting the fact that $G$ is $2P_3$-free. This shows that $\mathcal{H}$ is $1$-Sperner, as claimed.

Since $\mathcal{H} = (V_{\mathcal{H}},E_{\mathcal{H}})$ is a $1$-Sperner hypergraph with $V_{\mathcal{H}} = A \neq \emptyset$, Theorem~\ref{thm:decomposition} implies the existence of a vertex $z\in A$ such that $\mathcal{H}$ is $z$-decomposable. Let $z\in A$ denote such a vertex and let $\mathcal{H}$ be the corresponding gluing of $\mathcal{H}_1 = (V_1,E_1)$ and $\mathcal{H}_2 = (V_2,E_2)$.
That is, $V(\mathcal{H}) = V_1\cup V_2\cup\{z\}$ and $E_{\mathcal{H}} = \{\{z\}\cup e\mid e\in E_1\} \cup \{V_1\cup e\mid e\in E_2\}$.
Let us write $A^1 = V_1$ and $A^2 = V_2$.
Then $A$ is the disjoint union $A = A^1\cup A^2\cup\{z\}$ and letting $B^1 = N(z)$ and $B^2 = B\setminus B^1$,
the definition of
$\mathcal{H}$ and the fact that $\mathcal{H} = \mathcal{H}_1\odot \mathcal{H}_2$ implies that
\begin{equation}\label{eq-graph-33}
\begin{aligned}
E(G) &=& \{zv\mid v\in B^1\} \cup\{uv\mid u\in V_1, v\in B^1, u\in e_v\in E_1\}~~~~~~~~~~~~~~~\\
&&\cup\, \{uv\mid u\in V_1, v\in B^2\}\cup \{uv\mid u\in V_2, v\in B^2, u\in e_v\in E_2\}\,.
\end{aligned}
\end{equation}

\begin{sloppypar}
We claim that $\{\{z\},A^1,A^2,B^1,B^2\}$ is an
$M[0,0]$-partition of $G$ with the desired property. Equation~\eqref{eq-graph-33} and the fact that $A$ and $B$ are independent sets imply that $\{\{z\},A^1,A^2,B^1,B^2\}$  is indeed an
$M[0,0]$-partition of $G$.
For $i \in \{1,2\}$, let us denote by
$(G_i,A^i,I^i)$ the labeled bipartite subgraph if $(G,A,B)$ induced by the sets $A^i\cup B^1$ and $A^2\cup B^2$, respectively.
Then, $(G_i,A^i,B^i)$ is isomorphic to the bigraph of $\mathcal{H}_i$. By Observation~\ref{obs:constituents}, since $\mathcal{H}$ is $1$-Sperner, so is $\mathcal{H}_i$. Hence, by~\Cref{prop:translation-to-graphs}, the bigraph of $\mathcal{H}_i$, that is, $(G_i,A_i,B_i)$, is right-Sperner.
Of course, it is also $2P_3$-free.
\end{sloppypar}

It remains to justify that the above decomposition can be computed in time $\mathcal{O}(n^3)$. First, note that since $G$ is $2P_3$-free,
$G$ has at most one component with more than two vertices.
Thus, in order to determine a bipartition $(A,B)$ of $G$ such that the labeled bigraph $(G,A,B)$ is right-Sperner, we can place, without loss of generality, all isolated vertices of $G$ into $A$, as well as one vertex from each component of $G$ isomorphic to $K_2$. If $G$ has a component, say $G'$, with more than two vertices, then we test for each of the two bipartitions $(A',B')$ of $G'$ whether the labeled bigraph $(G',A',B')$ is right-Sperner. This can be done in time $\mathcal{O}(|B'|^2|A'|) = \mathcal{O}(n^3)$  directly from the definition by computing all the neighborhoods of vertices of $B'$ and comparing each pair for inclusion.
If none of them is right-Sperner, then we infer that $G$ is not right-Sperner. Otherwise, we have obtained a bipartition $(A,B)$ of $G$ such that $(G,A,B)$ is right-Sperner. It is now not difficult to see that $G$ has an $M[0,0]$-partition with the desired properties if and only if there exists a vertex $z$ in $A$ such that if we denote by $B^1$ the neighborhood in $G$ of $z$, by $A^1$ the set of all vertices at distance two in $G$ from $z$,
and by $B^2$ the set $B\setminus B^1$, then $G$ contains all possible edges between vertices of $A^1$ and vertices of $B^2$. If $z\in A$ is such a vertex, then we set $A^2 = A\setminus(\{z\}\cup A^1)$ and
$\{\{z\},A^1,A^2,B^1,B^2\}$ forms an $M[0,0]$-partition of $G$ with the desired properties. The above test can be performed in time $\mathcal{O}(n+m) = \mathcal{O}(n^2)$ per vertex, resulting in a total running time of $\mathcal{O}(n^3)$, as claimed.
\end{proof}

\begin{sloppypar}
\begin{thm-cobigraph}[restated]
Let $G$ be a $\overline{2P_3}$-free cobipartite graph with at least two vertices such that $\overline{G}$ is right-Sperner
and let $(A,B)$ be a bipartition of $\overline{G}$ such that the labeled bigraph $(\overline{G},A,B)$ is right-Sperner.
Then, there exists a partition of $A$ as $A = \{z\}\cup A^1\cup A^2$ and a partition of $B$ as $B = B^1\cup B^2$
such that $\{\{z\},A^1,A^2,B^1,B^2\}$ is an
$M[1,1]$-partition of $G$ (where the rows and the columns are indexed in order by $\{z\},A^1,A^2,B^1,B^2$) such that
the subgraphs of $G$ induced by the sets $A^1\cup B^1$ and $A^2\cup B^2$, respectively, are $\overline{2P_3}$-free and their complements are right-Sperner.

Moreover, given a $\overline{2P_3}$-free bigraph, a bipartition $(A,B)$ as above, along with an $M[1,1]$-partition with the stated properties, can be computed in time $\mathcal{O}(|V(G)|^3)$.
\end{thm-cobigraph}
\end{sloppypar}

\begin{proof}
Apply Theorem~\ref{thm:decomposition-2P3-free-right-Sperner-bigraphs} to the complement of $G$.
\end{proof}

\end{document}